\newtheorem{theorem}{Theorem}[section]
\newtheorem{corollary}[theorem]{Corollary}
\newtheorem{definition}[theorem]{Definition}
\newtheorem{lemma}[theorem]{Lemma}
\newtheorem{proposition}[theorem]{Proposition}
\newtheorem{remark}[theorem]{Remark}
\newenvironment{proof}[1][Proof]{\textbf{#1.} }{\ \rule{0.5em}{0.5em}}
\newcommand{\refeqn}[1]{(\ref{#1})}
\newcommand{\virgolette}[1]{``#1''}
\newcommand{\cinf}[0]{C^{\infty}}
\newcommand{\matr}[0]{\operatorname{Mat}}
\newcommand{\spann}[0]{\operatorname{span}}
\newcommand{\Iso}[0]{\operatorname{Iso}}
\newcommand{\acc}[1]{\`{#1}}
\newcommand{\argg}[0]{\operatorname{arg}}
\begin{document}

\title{{\bf Symmetries and invariance properties of stochastic differential equations driven by semimartingales with jumps}}
\author{Sergio Albeverio\thanks{Institut f\"ur Angewandte Mathematik, Rheinische Friedrich-Wilhelms-Universit\"at Bonn, Endenicher Allee 60, Bonn, Germany, \emph{email: albeverio@uni-bonn.de}}, Francesco C. De Vecchi\thanks{Dipartimento di Matematica, Universit\`a degli Studi di Milano, via Saldini 50, Milano, Italy, \emph{email: francesco.devecchi@unimi.it}}, Paola Morando\thanks{DISAA, Universit\`a degli Studi di Milano, via Celoria 2, Milano, Italy, \emph{email: paola.morando@unimi.it}} and Stefania Ugolini\thanks{Dipartimento di Matematica, Universit\`a degli Studi di Milano, via Saldini 50, Milano, Italy, \emph{email: stefania.ugolini@unimi.it}}}
\date{}

\maketitle

\begin{abstract}
Stochastic symmetries and related invariance properties of finite dimensional SDEs driven by general  c\acc{a}dl\acc{a}g  semimartingales taking values in Lie groups are defined
and investigated.  In order to enlarge the class of possible symmetries of SDEs, the new concepts of gauge and time symmetries for semimartingales on Lie
groups are introduced. Markovian and non-Markovian examples of gauge and time symmetric processes are  provided. The considered set of SDEs
includes affine and Marcus type SDEs as well as smooth SDEs driven by  L\'evy  processes. Non trivial invariance results concerning a class of
iterated random maps are obtained as special cases.
\end{abstract}

\bigskip

\noindent\textbf{Keywords}: Lie symmetry analysis, stochastic differential equations, semimartingales with jumps, stochastic processes on manifolds

\noindent\textbf{MSC numbers}: 60H10; 60G45; 58D19

\section{Introduction}

The study of symmetries and invariance properties of ordinary and partial differential equations (ODEs and PDEs, respectively) is a classical and
well-developed area of
research  (see \cite{Bluman1989,Gaeta1994,Olver1993,Stephani1989})
and provides a powerful tool for computing some explicit solutions to the equations and analysing their qualitative behaviour.\\
The study of invariance properties of finite or infinite dimensional stochastic differential equations (SDEs)
is, in comparison, less developed and a systematic study could be fruitful from both the practical and the theoretical points of view.\\
The knowledge of some closed formulas is important in many applications of stochastic processes since it  permits to
develop faster and cheaper numerical algorithms for the simulation of the process or to evaluate interesting quantities related to it. Moreover,
the use of closed formulas
 allows the application of simpler statistical methods for the calibration of models (this is the reason for the popularity of affine
models in mathematical finance, see e.g. \cite{Cuchiero2011,Filipovic2003}, or of the Kalman filter and its generalizations in the theory of
stochastic filtering, see e.g. \cite{Bain2009}). The presence of symmetries and invariance properties is
 a strong clue for the possibility of closed formulas (see, for example, \cite{Craddock2009,Craddock2007,Craddock2012},
 where classical infinitesimal symmetry techniques are used for finding fundamental solutions of some diffusion processes applied in mathematical
 finance, or \cite{DeLara1997(1),DeLara1997(2),DM2016}, where geometrical methods based on Lie algebras are used to find new finite
 dimensional stochastic filters).\\
The investigation of invariance properties is relevant also from a theoretical point of view, in particular when stochastic processes are discussed in a
geometrical framework. Some interesting examples of this approach are the study of  L\'evy  processes on Lie groups \cite{Albeverio2007,Liao2004},
the geometric description of
stochastic filtering (see \cite{Elworthy2010}, where invariant diffusions on fibred bundles are discussed), and the study of variational
stochastic systems (\cite{Cruzeiro2016,Holm2015,Zambrini2015}).\\

In this paper we apply Sophus Lie original ideas to the study of stochastic symmetries of a finite dimensional SDE driven by
general c\acc{a}dl\acc{a}g semimartingales taking values in a Lie group. In particular, we introduce a group of transformations which change both
the family of processes, solutions to the considered SDE, and its driving noise, and we transform correspondingly the coefficients of the SDE. Therefore,  we look for the subgroup of these transformations which leave
invariant the set of solutions to the SDE.\\
 In order to clarify the novelty of our study we describe, without claiming to be exhaustive, some
previous approaches to the same problem.
There are essentially two natural approaches to the description of the symmetries of an SDE. The first one, applied when the solution processes are
Markovian semimartingales, consists in studying the invariance properties of the generator of the SDE solutions (which is an analytical object).
This approach, used by Glover et al. \cite{Glover1991,Glover1998,Glover1990}, Cohen de Lara \cite{DeLara1991,DeLara1995} and Liao
\cite{Liao1992,Liao2009}, deals with a large group of transformations involving both a general spatial transformation and a
solution-dependent stochastic time change.\\
The second approach, mainly applied to Brownian-motion-driven SDEs, consists in restricting the attention to a suitable set of transformations and directly apply
a natural notion of symmetry, closely inspired by the ODEs case (see Gaeta et al.
\cite{Gaeta2000,Gaeta1999}, Unal \cite{Unal2004}, Srihirun, Meleshko and Schulz
\cite{Srihirun2006}, Fredericks and Mahomed \cite{Fredericks2007}, Kozlov \cite{Kozlov2010(1),Kozlov2010(2)} for SDEs driven by Brownian motion (see also \cite{Gaeta2017} for a review on this subject) and L\'azaro-Cam\'i and Ortega \cite{Cami2009} for SDEs driven by general continuous semimartingales). \\
Both approaches have their strengths and weaknesses. For example, the first method  permits to treat a very general family of processes (all
Markovian processes on a metric space) and a large class of transformations with interesting applications  (see
\cite{Glover1990,Liao2009}), but the explicit calculation of the symmetries is quite difficult in the non-diffusive case.
Conversely, the second approach
 allows us to face the non-Markovian setting (see \cite{Cami2009}) and  permits easy explicit calculations. In particular, in the latter framework, it
 is possible   to
get the \emph{determining equations}, that are a set of first order PDEs which uniquely characterizes the  symmetries of the SDE. As for the first approach,
also the second one has interesting
applications (see, e.g., \cite{DMU2,Cami2009,Misawa1999}), even though,  until now, it has been confined to the case of continuous semimartingales
and usually considers a  family of transformations which is smaller than the family considered by the first method.\\

In this paper, aiming at  reducing the gap between the two approaches, we propose a possible foundation of the concept of symmetry for general SDEs
and we extend the methods introduced in \cite{DMU1} where, despite working in the setting of the second approach,
we introduced a large family of transformations which allows us to obtain all the symmetries of the first setting for Brownian-motion-driven SDEs.\\
In particular in \cite{DMU1} we  considered an SDE as a pair $(\mu(x),\sigma(x))$, where $\mu$ is the drift and $\sigma$ is the diffusion coefficient
defined on a manifold $M$ and we called solution to the SDE $(\mu,\sigma)$ a pair $(X,W)$, where $X$ is a semimartingale on $M$ and $W$ is an
 $n$-dimensional
 Brownian
motion. A stochastic transformation is a triad  $T=(\Phi,B,\eta)$, where $\Phi$ is a diffeomorphism of $M$, $B$ is a $X_t$-dependent rotation and
$\eta$ is a $X_t$-dependent density of a stochastic time change. The transformation $T$ induces an action $E_T$ on the SDE $(\mu,\sigma)$ and an action
$P_T$ on the process $(X,W)$. The operator $P_T$ acts on the process $(X,W)$ changing the semimartingale
$X$ by the diffeomorphism $\Phi$ and the time change $\int_0^t{\eta dt}$, and on the Brownian motion $W$ by the rotation $B$ and the
same time change. Since the Brownian motion is invariant with respect to both rotations and time rescaling, the process $P_T(X,W)$ is composed by
a semimartingale on $M$ and a new $n$-dimensional Brownian motion. The action $E_T$ of the stochastic transformation on $(\mu,\sigma)$ is the
unique way of changing the SDE so that, if $(X,W)$ is a solution to $(\mu,\sigma)$, then $P_T(X,W)$ is a solution to $E_T(\mu,\sigma)$. \\
In this framework a symmetry is defined as a transformation $T$ which leaves the SDE $(\mu,\sigma)$ invariant. These transformations are the only ones
which preserve the set of solutions to the SDE $(\mu,\sigma)$. Since all actions $P_T$ and $E_T$ are explicitly determined in terms of $T=(\Phi,B, \eta)$,
it is possible to write the  determining equations satisfied by $T$ which can be solved explicitly with a
 computer algebra software (see \cite{DMU2}).\\
The main aim of the present paper is to generalize this approach  from Brownian-motion-driven SDEs to SDEs driven by general c\acc{a}dl\acc{a}g semimartingales taking values in  Lie
groups. There are two main differences with respect to the Brownian motion setting. The first one is the lack of a natural geometric transformation
rule for processes with jumps replacing the It\^o transformation rule for continuous processes. This fact makes the action of a
diffeomorphism $\Phi$ on an SDE more difficult to be described. The second is the fact that a general semimartingale has not the
symmetry properties of Brownian motion in the sense that we cannot \virgolette{rotate} it or make general time changes.\\

In order to address the first problem we restrict ourselves to a particular family of SDEs (that we call \emph{canonical SDEs}) introduced by Cohen
in \cite{Cohen1996,Cohen1996(2)} (see also \cite{Applebaum1997,Cohen1995}). In particular, we consider SDEs defined by a map
$\Psi:M \times N \rightarrow M$, where $M$ is the manifold where the solution lives and $N$ is the Lie group where the driving process takes values. This
definition simplifies the description of the transformations of the solutions $(X,Z) \in M \times N$. In fact, if $(X,Z)$ is a solution to the SDE
$\Psi(x,z)$  then, for any diffeomorphism $\Phi$, $(\Phi(X),Z)$ is a solution to the SDE $\Phi(\Psi(\Phi^{-1}(x),z))$ (see Theorem
\ref{theorem_geometrical1} and Theorem \ref{theorem_gauge1}). We remark that the family of canonical SDEs is not too restrictive: in fact it includes  affine
types SDEs, Marcus type SDEs, smooth SDEs driven by  L\'evy  processes and a class of iterated random maps (see Subsection \ref{subsection_comparison}
for further details).\\
The second problem is addressed by  introducing two new notions of invariance of a semimartingale defined on a Lie group. These two notions are
extensions of predictable transformations which preserve the law of $n$ dimensional Brownian motion and $\alpha$-stable processes studied for
example in \cite[Chapter 4]{Kallenberg2005}. The first notion, which we call \emph{gauge symmetry}, generalizes the rotation invariance of
Brownian motion, while the second one, which we call \emph{time symmetry}, is an extension of the time rescaling invariance of Brownian motion.
The concept of gauge symmetry group is based on the action $\Xi_g$ of a Lie group $\mathcal{G}$ ($g$ is an element of $\mathcal{G}$) on the Lie
group $N$ which preserves the identity $1_N$ of $N$. A semimartingale $Z$ admits $\mathcal{G}$ as gauge symmetry group if, for any locally
bounded predictable process $G_t, t\in \mathbb{R}_+$ taking values in $\mathcal{G}$, the well defined transformation $d\tilde{Z}=\Xi_{G_t}(dZ)$
has the same probability law of $Z$ (see Section
\ref{section_gauge}). A similar definition is given for the time symmetry, where $\Xi_g$ is replaced by an $\mathbb{R}_+$ action $\Gamma_r$ and the
process $G_t$ is replaced by an absolutely continuous time change $\beta_t$ (see Section \ref{section_time}).\\
Given an SDE $\Psi$ and a driving process $Z$ with gauge symmetry group $\Xi_g$ and time symmetry $\Gamma_r, r\in \mathbb{R}_+$, we are able to define a
stochastic transformation $T=(\Phi,B,\eta)$, where $\Phi$ and $\eta$ are a diffeomorphism respectively a density of a time change as in the Brownian
setting, while $B$ is a function taking values in $\mathcal{G}$ (in the Brownian setting $\mathcal{G}$ is the group of rotations in
$\mathbb{R}^n$). In order to generalize \cite{DMU1}, using the properties of canonical SDEs and of gauge and time symmetries, we define an action $E_T$ of $T$ on
the SDE $\Psi$ as well as an action $P_T$  of $T$ on the solutions $(X,Z)$.\\

In this paper there are three main novelties. The first one is that, for the first time, the notion of symmetry of an SDE driven by a
general c\acc{a}dl\acc{a}g, in principle non-Markovian, semimartingale is studied in full detail. The analysis is based on the introduction of
a group of transformations which permits both the space transformation $\Phi$ and the gauge and time transformations $\Xi_g,\Gamma_r$. In this
way our approach extends the results of \cite{Cami2009}, where only general continuous semimartingales $Z$ and space transformations $\Phi$ were
considered. We also generalize the results to the case of a Markovian process  on a manifold $M$ and with a regular generator. Indeed, due to the
introduction of gauge and time symmetries, we recover all smooth symmetries of a Markovian process which would be lost if
we had just considered  the space transformation $\Phi$.\\
The second novelty is the introduction of the notion of gauge symmetry group and time symmetry and the careful analysis of their properties. Predictable
transformations which preserve the law of a process have already been considered  for special classes of processes as the $n$ dimensional
Brownian motion, $\alpha$-stable processes or Poisson processes (see \cite{Kallenberg2005,Privault2012}), but it seems the first time that the
invariance with respect to transformations depending on general predictable processes is studied for general semimartingales taking values in 
Lie groups. Furthermore, proving Theorem \ref{theorem_characteristic2} and Theorem \ref{theorem_time2}, we translate the notion of gauge and time
symmetries into the language of characteristics of a semimartingale (see \cite{Jacod2003} for the characteristics of a  $\mathbb{R}^n$
semimartingale and Theorem \ref{theorem_characteristic1} for our extension to general Lie groups). This translation permits to see gauge and
time symmetries as special
examples of predictable transformations preserving the characteristics (and so the law) of a process. This new insight is certainly interesting in itself and, in our opinion, deserves a deeper investigation.\\
The third novelty of the paper is given by our explicit approach: indeed, we provide many results which permit to check explicitly whether a
semimartingale admits  given gauge and time symmetries and to compute stochastic transformations which are symmetries of a given SDE. In
particular, Theorem \ref{theorem_characteristic3} and Corollary \ref{corollary_characteristic1} provide easily applicable criteria to construct
gauge symmetric  L\'evy  processes (see also the corresponding Theorem \ref{theorem_time3} and Theorem \ref{theorem_time4} for time symmetries).
Analogously, Theorem \ref{theorem_characteristic4} permits to construct non-Markovian processes with a gauge symmetry group. Finally we obtain
the determining equations \refeqn{equation_determining2} which are satisfied, under some additional hypotheses on the jumps of the driving
process $Z$, by any infinitesimal symmetry. The possibility of providing explicit determining equations is the main reason to restrict our
attention to canonical SDEs instead of considering  more general classes of SDEs. Indeed, an interesting consequence of our study is that we
provide a black-box method, applicable in a several different situations,  which permits to explicitly compute symmetries of a given SDE or to construct
all the canonical SDEs  admitting a given symmetry.
For these reasons, in order to show the generality and the
  user-friendliness of our theory, we conclude the paper with an example inspired by the iterated random maps theory.  \\

The paper is organized as follows. In Section \ref{section_geometrical} we introduce both the notions of geometrical SDE and of canonical SDE, and we discuss
their transformation properties. In Section \ref{section_gauge} and in Section \ref{section_time} we
give the definition of gauge and time symmetries and we study their properties. Finally, in Section \ref{section_symmetry}, we
extend the study of symmetries of Brownian-motion-driven SDEs to SDEs driven by general c\acc{a}dl\acc{a}g semimartingales.

\section{Stochastic differential equations with jumps on manifolds}\label{section_geometrical}

\subsection{Geometrical SDEs with jumps}

\begin{definition}
An adapted c\acc{a}dl\acc{a}g stochastic process $X$ on a smooth manifold $M$ is a semimartingale if, for any smooth function $f \in
\cinf(M)$, the real-valued process $f(X)$ is a real-valued semimartingale.
\end{definition}

Simplifying the setting of  \cite{Cohen1996},  a stochastic differential equation (SDE) defined on a smooth manifold $M$ and driven by a general c\acc{a}dl\acc{a}g
semimartingale  on a  smooth manifold $N$ can be described in terms  of a smooth function
\begin{equation}\label{equation_manifold0}
\overline{\Psi}:M \times N \times N \rightarrow M.
\end{equation}
In particular, let $\overline{\Psi}(x,z',z)$  be a smooth function such that, for any $z \in N$, $\overline{\Psi}(\cdot,z,z)=id_M$ (the identity map on $M$). \\
We first consider the case where the manifolds $M,N$ are open subsets of $\mathbb{R}^m$ and $\mathbb{R}^n$ and we take two global coordinate systems
$x^i$ and $z^{\alpha}$ of $M$ and $N$ respectively. The semimartingale $X$ with values in $ M$ is a solution to the SDE defined by the map
$\overline{\Psi}$ and driven by the semimartingale $Z$ defined on $N$ if, for $t \ge 0$,
\begin{equation}\label{equation_manifold1}
\begin{array}{ccl}
X^i_t-X^i_0&=&\int_0^t{\partial_{z'^{\alpha}}(\overline{\Psi}^i)(X_{s_-},Z_{s_-},Z_{s_-})dZ^{\alpha}_s}+\frac{1}{2}\int_0^t{\partial_{z'^{\alpha}z'^{\beta}}(\overline{\Psi}^i)(X_{s_-},Z_{s_-},Z_{s_-})d[Z^{\alpha},Z^{\beta}]_s}\\
&+&\sum_{0\leq s \leq
t}\{\overline{\Psi}^i(X_{s_-},Z_{s},Z_{s_-})-\overline{\Psi}^i(X_{s_-},Z_{s_-},Z_{s_-})-\partial_{z'^{\alpha}}(\overline{\Psi}^i)(X_{s_-},Z_{s_-},Z_{s_-})\Delta
Z^{\alpha}_s\},
\end{array}
\end{equation}
where $\overline{\Psi}^i:=x^i(\overline{\Psi})$, the derivation $\partial_{z'^{\alpha}}$ is the derivative of $\overline{\Psi}^i(x,z',z)$ with respect to the second set $z'$ of variables on $N$ and with respect to the coordinates system $z^{\alpha}$, $X^i:=x^i(X)$, $Z^{\alpha}:=z^{\alpha}(Z)$ and $\Delta Z^{\alpha}_s:=Z^{\alpha}_s-Z^{\alpha}_{s_-}$. \\
In order to extend the previous definition to the case of two general smooth
manifolds $M,N$ we  introduce two embeddings $i_1:M
\rightarrow \mathbb{R}^{k_M}$ and $i_2:N \rightarrow
\mathbb{R}^{k_N}, k_M, k_N \in \mathbb{N}$, and an extension
$$\tilde{\Psi}: \mathbb{R}^{k_M} \times \mathbb{R}^{k_N} \times \mathbb{R}^{k_N} \rightarrow \mathbb{R}^{k_M},$$
of the map $\overline{\Psi}$ such that
$$\tilde{\Psi}(i_1(x),i_2(z'),i_2(z)))=i_1(\overline{\Psi}(x,z',z)).$$
A semimartingale $X$ defined on $M$ solves the SDE defined by $\overline{\Psi}$ with respect
to the noise $Z$ defined on $N$ if $i_1(X) \in \mathbb{R}^{k_M}$ solves the integral problem \refeqn{equation_manifold1} where the map
$\overline{\Psi}$ is replaced by $\tilde{\Psi}$ and the noise $Z$ is replaced by $i_2(Z)$.\\

We generalize  \refeqn{equation_manifold0}  by considering a map $\overline{\Psi}_k$ of the form
$$\overline{\Psi}_{\cdot}(\cdot,\cdot,\cdot):M \times N \times N \times \mathcal{K} \rightarrow M,$$
where $\mathcal{K}$ is a (general) metric space (although in this paper we mostly take $ \mathcal{K}$ as a finite dimensional smooth manifold),
$\overline{\Psi}_k$ is smooth in the $M,N$ variables, and $\overline{\Psi}_k$ and all its derivatives with respect to the $M,N$ variables are
continuous in all their arguments. Let $K$ be a predictable locally bounded process taking values in $\mathcal{K}$. If $M,N$ are two open
subsets of $\mathbb{R}^n,\mathbb{R}^m$, we say that $(X,Z)$ solves the SDE $\overline{\Psi}_{K_t}$ if, for $t\ge 0,$ \small
\begin{equation}\label{equation_manifold2}
\begin{array}{ccl}
X^i_t-X^i_0&=&\int_0^t{\partial_{z'^{\alpha}}(\overline{\Psi}^i_{K_s})
(X_{s_-},Z_{s_-},Z_{s_-})dZ^{\alpha}_s}+\frac{1}{2}\int_0^t{\partial_{z'^{\alpha}z'^{\beta}}(\overline{\Psi}^i_{K_s})(X_{s_-},Z_{s_-},Z_{s_-})d[Z^{\alpha},Z^{\beta}]_s}\\
&+&\sum_{0\leq s \leq
t}\{\overline{\Psi}^i_{K_s}(X_{s_-},Z_{s},Z_{s_-})-\overline{\Psi}^i_{K_s}(X_{s_-},Z_{s_-},Z_{s_-})-\partial_{z'^{\alpha}}(\overline{\Psi}^i_{K_s})(X_{s_-},Z_{s_-},Z_{s_-})\Delta
Z^{\alpha}_s\}.
\end{array}
\end{equation}
\normalsize
 The extension to the case where $M,N$ are general manifolds can be easily obtained as before by using embeddings $i_1,i_2$ and an
extension $\tilde{\Psi}_k$ of $\overline{\Psi}_k$ which is continuous in the $M,N,\mathcal{K}$ variables and smooth in the $N,M$ variables.

\begin{definition}\label{definition_solution}
Let $M,N$ be two subsets of $\mathbb{R}^m$ and $\mathbb{R}^n$ respectively,  $\mathcal{K}$ be  a metric space and  $K$ be a predictable locally
bounded process taking values in $\mathcal{K}$. A pair of semimartingales $(X,Z)$ on $M$ and $N$ respectively is a solution to the \emph{geometrical
SDE} defined by $\overline{\Psi}_{K_t}$ until the stopping time $\tau$ if $X$ and $Z$, stopped at the stopping time $\tau$, solve the integral equation
\refeqn{equation_manifold2}. If $N,M$ are two general manifolds, $(X,Z)$ solves the geometrical SDE defined by $\overline{\Psi}_{K_t}$ until the stopping time $\tau$ if, for any couple of embeddings $i_1,i_2$ of $M,N$ in $\mathbb{R}^{k_M},\mathbb{R}^{k_N}$ respectively and for
any extension $\tilde{\Psi}_k$ of $\overline{\Psi}_k$, the pair $(i_1(X),i_2(Z))$ is a solution to the SDE $\tilde{\Psi}_{K_t}$ until the stopping time $\tau$. If $(X,Z)$ is
a solution to the SDE $\overline{\Psi}_{K_t}$ until the stopping time $\tau$ we write
$$dX_t=\overline{\Psi}_{K_t}(dZ_t).$$
\end{definition}

When not strictly necessary, we omit the stopping time $\tau$ from the definition of solution to a SDE.

\begin{theorem}\label{theorem_manifold1}
Given two open subsets $M$ and $N$  of  $\mathbb{R}^m$ and
$\mathbb{R}^n$ respectively, for any semimartingale $Z$ on
$N$ and any $x_0 \in M$, there exist a stopping time $\tau$, almost surely strictly positive, and a semimartingale $X$ on $M$, uniquely defined until $\tau$ and such that $X_0=x_0$ almost surely, such that $(X,W)$ is a solution of the SDE $\Psi_{K_t}$ until the stopping time $\tau$.
Furthermore, if $M,N$ are two general manifolds, $Z$ is a semimartingale on $N$, $i_1,i_2$ are  two embeddings of $N,M$ in
$\mathbb{R}^{k_M}$ and $\mathbb{R}^{k_N}$ and $\tilde{\Psi}_k$ is any extension of $\overline{\Psi}_k$, then the unique solution
$(\tilde{X},i_2(Z))$ to the SDE $\tilde{\Psi}_k$ is of the form $(i_1(X),i_2(Z))$ for a unique semimartingale $X$ on $M$. Finally, the process
$X$ does not depend on the embeddings $i_1,i_2$ and on the extension $\tilde{\Psi}_k$.
\end{theorem}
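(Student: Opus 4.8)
The plan is to dispose first of the case where $M,N$ are open subsets of $\mathbb{R}^m,\mathbb{R}^n$, and then to reach the general manifold statement through the embeddings, the coordinate independence being the last and most delicate point. In the Euclidean case I would rewrite \refeqn{equation_manifold2} as a genuine semimartingale driven SDE with jumps. Setting $g^i_\alpha(x,z)=\partial_{z'^\alpha}(\overline\Psi^i_{K})(x,z,z)$, $h^i_{\alpha\beta}(x,z)=\tfrac12\partial_{z'^\alpha z'^\beta}(\overline\Psi^i_{K})(x,z,z)$ and $F^i(x,z',z)=\overline\Psi^i_K(x,z',z)-\overline\Psi^i_K(x,z,z)-\partial_{z'^\alpha}(\overline\Psi^i_K)(x,z,z)(z'^\alpha-z^\alpha)$, the equation reads $X_t=x_0+\int_0^t g(X_{s_-},Z_{s_-})\,dZ_s+\int_0^t h(X_{s_-},Z_{s_-})\,d[Z,Z]_s+\sum_{s\le t}F(X_{s_-},Z_s,Z_{s_-})$. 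Since $\overline\Psi_k$ together with its $M,N$ derivatives is continuous and $K$ is locally bounded, the maps $g,h$ are locally Lipschitz in $x$, so the two integral operators are functional Lipschitz.

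The decisive point for the jump sum is that $F$ is a second order Taylor remainder: on a compact set $|F(x,z',z)|\le C|z'-z|^2$, whence $\sum_{s\le t}|F(X_{s_-},Z_s,Z_{s_-})|\le C\sum_{s\le t}|\Delta Z_s|^2\le C[Z,Z]_t<\infty$. This shows that the jump sum is an adapted c\acc{a}dl\acc{a}g process of locally finite variation and, again by smoothness of $\overline\Psi_k$, that it is functional Lipschitz in $X$. With all three operators functional Lipschitz, existence and uniqueness follow from the fixed point (Picard) scheme for semimartingale driven SDEs, as developed for this class of equations in \cite{Cohen1996}. To handle the fact that $M$ is only open and the coefficients only locally Lipschitz, I would localize: replace $g,h,\overline\Psi_k$ by globally Lipschitz and bounded modifications coinciding with them on a compact neighbourhood $K\Subset M$ of $x_0$, solve the resulting globally well posed equation, and introduce the exit time $\tau_K$ of $K$. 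As $x_0$ is interior, $\tau_K>0$ almost surely and on $[0,\tau_K)$ the modified solution solves the original equation; exhausting $M$ by $K_n\uparrow M$ and patching the solutions, which are consistent by uniqueness, yields the semimartingale $X$ uniquely determined up to $\tau=\lim_n\tau_{K_n}$.

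For general manifolds I would fix embeddings $i_1,i_2$ and an extension $\tilde\Psi_k$, apply the Euclidean result in $\mathbb{R}^{k_M}$ to obtain the unique $\tilde X$ with $\tilde X_0=i_1(x_0)$, and then show that $\tilde X$ never leaves $i_1(M)$, so that $X:=i_1^{-1}(\tilde X)$ is a well defined semimartingale on $M$. The correction terms in \refeqn{equation_manifold2} are precisely those of the It\^o formula, so at a jump time the post jump value is forced to be $\tilde X_s=\tilde\Psi_{K_s}(\tilde X_{s_-},i_2(Z_s),i_2(Z_{s_-}))=i_1\big(\overline\Psi_{K_s}(X_{s_-},Z_s,Z_{s_-})\big)\in i_1(M)$, so jumps preserve the submanifold. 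For the continuous evolution, differentiating the compatibility identity $\tilde\Psi_k(i_1(x),i_2(z'),i_2(z))=i_1(\overline\Psi_k(x,z',z))$ in $z'$ shows that the continuous drift and diffusion parts of $\tilde X$ are tangent to $i_1(M)$ along the submanifold, the second order term $h\,d[Z,Z]$ playing exactly the role the It\^o correction plays in keeping Stratonovich solutions on a submanifold; equivalently, writing $i_1(M)$ locally as a common zero set of smooth functions and applying It\^o's formula to these functions shows that they remain at zero.

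Finally, independence of $X$ from the choices is obtained in two steps. Independence of the extension is immediate once one notes that $g,h$ and $F$ evaluated at points of $i_1(M)$ are determined by $\overline\Psi_k$ alone, the derivatives entering them being taken in the $N$ directions, so any two extensions define the same equation along $i_1(M)$ and, by uniqueness, their solutions coincide. Independence of the embedding is obtained by taking a second pair $i_1',i_2'$, forming the transition diffeomorphism $\phi=i_1'\circ i_1^{-1}$ between the two images, and invoking the transformation rule for geometrical SDEs under diffeomorphisms announced in the Introduction: it gives that $\phi(\tilde X)$ solves the equation attached to $i_1',i_2'$ with the correct initial condition, so $\phi(\tilde X)=\tilde X'$ by uniqueness, i.e. $(i_1')^{-1}(\tilde X')=i_1^{-1}(\tilde X)=X$. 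I expect the genuine obstacle to be this geometric core, namely the tangency of the continuous part to $i_1(M)$ and the covariance of the solution under the transition maps, since it is here that the specific second order structure of \refeqn{equation_manifold2}, rather than any soft functional analytic input, is indispensable; the existence and uniqueness in coordinates, by contrast, is a routine localization plus fixed point argument.
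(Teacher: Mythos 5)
Your proposal is correct and follows essentially the same route as the paper: the paper's entire proof consists of observing that the local boundedness of $K$ makes $\tilde{\Psi}_{K_t}$ locally Lipschitz, uniformly in $\omega$, up to a sequence of stopping times $\tau_n \rightarrow +\infty$, and then citing Cohen (1996), Theorem 2, for everything else. Your argument is a faithful reconstruction of what that citation contains --- the functional-Lipschitz fixed point in coordinates, the localization, the invariance of the embedded submanifold under both the jump and continuous parts, and the independence of the embedding via the diffeomorphism covariance --- so it fills in the details the paper delegates to the reference rather than diverging from it.
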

\begin{proof}
Since the process $K$ is locally bounded, the function $\tilde{\Psi}_{K_t}$, up to a sequence of stopping times $\tau_n \rightarrow + \infty$,
is locally Lipschitz with Lipschitzianity constant uniform with respect to $\omega$. The proof of this fact can be found in \cite{Cohen1996},
Theorem 2.
${}\hfill$ \end{proof}\\

\subsection{Geometrical SDEs and diffeomorphisms}

The notion of geometrical SDE introduced  in Definition \ref{definition_solution} naturally suggests to consider transformations of solutions to an SDE.

\begin{theorem}\label{theorem_geometrical1}
Let $\Phi:M \rightarrow M'$ and $\tilde{\Phi}:N \rightarrow N'$ be two diffeomorphisms. If $(X,Z)$ is a solution to the geometrical SDE
$\overline{\Psi}_{K_t}$, then $(\Phi(X),\tilde{\Phi}(Z))$ is a solution of the geometrical SDE $\overline{\Psi}'_{K_t}$ defined by
$$\overline{\Psi}'_{K_t}(x,z',z)=\Phi(\overline{\Psi}_{K_t}(\Phi^{-1}(x),\tilde{\Phi}^{-1}(z'),\tilde{\Phi}^{-1}(z))).$$
\end{theorem}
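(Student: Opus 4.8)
The plan is to exploit the fact that the notion of solution to a geometrical SDE is intrinsic: by Theorem \ref{theorem_manifold1} it is independent of the chosen embeddings and of the chosen extension of the coefficient. Because of this I would not attempt to transform the integral equation \refeqn{equation_manifold2} by a stochastic change of variables (which would demand an It\^o/Marcus-type chain rule), but instead reduce the claim to a term-by-term coincidence of two integral equations written in cleverly matched coordinates. The decisive observation is that the transformation rule defining $\overline{\Psi}'$ is precisely the one that makes the coordinate (resp.\ embedded) representation of $\overline{\Psi}'$ equal to that of $\overline{\Psi}$, once the coordinate systems on $M',N'$ are pulled back from those on $M,N$.

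First I would treat the case in which all the manifolds are open subsets of Euclidean spaces. On $M'$ I would use the coordinate system $\bar{x}^i := x^i \circ \Phi^{-1}$ and on $N'$ the coordinate system $\bar{z}^\alpha := z^\alpha \circ \tilde{\Phi}^{-1}$, which is legitimate because the solution notion does not depend on the coordinate representation. In these coordinates the real-valued processes coincide, $\bar{x}^i(\Phi(X)) = X^i$ and $\bar{z}^\alpha(\tilde{\Phi}(Z)) = Z^\alpha$, so $dZ'^\alpha = dZ^\alpha$, $[Z'^\alpha,Z'^\beta] = [Z^\alpha,Z^\beta]$ and $\Delta Z'^\alpha = \Delta Z^\alpha$; moreover a direct substitution in the defining formula shows that the coordinate representation of $\overline{\Psi}'_{K_t}$ equals that of $\overline{\Psi}_{K_t}$. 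Consequently every term of \refeqn{equation_manifold2} written for $(\Phi(X),\tilde{\Phi}(Z))$ with coefficient $\overline{\Psi}'_{K_t}$ coincides with the corresponding term written for $(X,Z)$ with coefficient $\overline{\Psi}_{K_t}$, and the latter equation holds by hypothesis.

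For general manifolds I would realise the same idea through embeddings. Given arbitrary embeddings $i_1':M'\to\mathbb{R}^{k_{M'}}$ and $i_2':N'\to\mathbb{R}^{k_{N'}}$, I would set $i_1 := i_1'\circ\Phi$ and $i_2 := i_2'\circ\tilde{\Phi}$, which are embeddings of $M,N$, and note that $i_1(X)=i_1'(\Phi(X))$ and $i_2(Z)=i_2'(\tilde{\Phi}(Z))$. The key computation is then to check that any extension $\tilde{\Psi}_k$ of $\overline{\Psi}_k$ relative to $(i_1,i_2)$ is simultaneously an extension of $\overline{\Psi}'_k$ relative to $(i_1',i_2')$; this follows by substituting $a=\Phi(x)$, $b'=\tilde{\Phi}(z')$, $b=\tilde{\Phi}(z)$ into the defining relation and using $i_1 = i_1'\circ\Phi$. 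Since the same function $\tilde{\Psi}_k$ extends both maps, its ambient derivatives appearing in \refeqn{equation_manifold2} are identical, so the Euclidean integral equation satisfied by $(i_1(X),i_2(Z))$ is, verbatim, the equation asserting that $(i_1'(\Phi(X)),i_2'(\tilde{\Phi}(Z)))$ solves $\tilde{\Psi}_k$. After recording that $\tilde{\Phi}(Z)$ is again a semimartingale on $N'$ (compose with smooth functions) and that the initial condition is $\Phi(X_0)$, I would invoke the uniqueness and the embedding/extension-independence of Theorem \ref{theorem_manifold1} to conclude that $(\Phi(X),\tilde{\Phi}(Z))$ is the solution to $\overline{\Psi}'_{K_t}$ for every choice of embeddings and extension.

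I expect the only real subtlety to be the careful bookkeeping in this last step: one must make sure that exhibiting a single compatible pair of embeddings and a single compatible extension is enough, which is exactly licensed by the independence clause of Theorem \ref{theorem_manifold1}, and that the identity constraint $\overline{\Psi}'_{K_t}(\cdot,z,z)=id_{M'}$ is inherited, so that $\overline{\Psi}'_{K_t}$ is itself an admissible coefficient. No genuinely new stochastic analysis is required, since all of it is packaged into Definition \ref{definition_solution} and Theorem \ref{theorem_manifold1}; the content of the proof is the algebraic verification that the prescribed transformation rule makes the two coordinate/embedded representations agree.
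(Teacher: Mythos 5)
Your general-manifold argument is genuinely different from the paper's proof and, in its mechanics, correct: since Definition \ref{definition_solution} quantifies over \emph{all} embeddings and \emph{all} extensions, composing arbitrary embeddings $i_1',i_2'$ of $M',N'$ with $\Phi,\tilde{\Phi}$ gives embeddings of $M,N$, and the substitution $a=\Phi(x)$, $b'=\tilde{\Phi}(z')$, $b=\tilde{\Phi}(z)$ shows that the extensions of $\overline{\Psi}_k$ relative to $(i_1,i_2)$ and the extensions of $\overline{\Psi}'_k$ relative to $(i_1',i_2')$ are the \emph{same} set of functions (you state only one inclusion, but both hold). With that observation your manifold case closes purely formally---the universally quantified hypothesis feeds the universally quantified conclusion---and no appeal to uniqueness or to Theorem \ref{theorem_manifold1} is needed there at all. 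The paper instead argues by stochastic calculus: it reduces to the two special cases $\Phi=Id_M$ and $\tilde{\Phi}=Id_N$, and in each case applies It\^o's formula for semimartingales with jumps, Lemma \ref{lemma_geometrical1}, Remark \ref{remark_geometrical1} and the chain rule to verify the transformed integral equation term by term; the general case is then obtained by composition and by extending $\Phi,\tilde{\Phi}$ to ambient neighbourhoods.

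The genuine weak point is your Euclidean case, which is not a side case but the base of Definition \ref{definition_solution}: there ``solution'' means that equation \refeqn{equation_manifold2} holds in the \emph{given} global coordinates of $M'$ and $N'$, not in coordinates of your choosing. Your sentence ``which is legitimate because the solution notion does not depend on the coordinate representation'' is, in that setting, essentially the statement being proved: in the standard coordinates of $M',N'$ the equation for $(\Phi(X),\tilde{\Phi}(Z))$ involves $d(\tilde{\Phi}(Z))^{\alpha}$, $[\tilde{\Phi}(Z)^{\alpha},\tilde{\Phi}(Z)^{\beta}]^c$ and the jumps of $\tilde{\Phi}(Z)$, and relating these to the corresponding objects for $Z$ is exactly the It\^o-formula computation you set out to avoid. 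Your appeal to Theorem \ref{theorem_manifold1} can be turned into a rigorous bridge---per-embedding uniqueness plus the independence clause identify $\Phi(X)$ with the canonical solution on $M'$, which then satisfies the equation in the identity embedding as well---but note two costs: (i) this uses uniqueness, so to cover an arbitrary given solution $(X,Z)$ you must condition on the (possibly random) initial value and localize in the stopping time; (ii) it does not eliminate the stochastic analysis, it relocates it, since the independence clause of Theorem \ref{theorem_manifold1} is precisely the coordinate-invariance at stake and is proved in the cited reference by the same kind of computation the paper carries out. As written, your Euclidean step is asserted rather than proved; either make the uniqueness/localization bridge explicit, or verify the transformed equation directly as the paper does.
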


In order to prove Theorem \ref{theorem_geometrical1} we start by  establishing the following  lemma.

\begin{lemma}\label{lemma_geometrical1}
Given $k$  c\acc{a}dl\acc{a}g semimartingales  $X^1,...,X^k$, let $H^{\alpha}_1,...,H^{\alpha}_k$, for $\alpha=1,...,r$, be predictable processes
which can be integrated along $X^1,...,X^k$ respectively. If $\Phi^{\alpha}(t,\omega,x^1,x'^1,...,x^k,x'^k):\mathbb{R}_+ \times \Omega
\times \mathbb{R}^{2k} \rightarrow \mathbb{R}$ are some progressively measurable random functions continuous in
$x^1,x'^1,...,x^k,x'^k$ and such that $|\Phi^{\alpha}(t,\omega,x^1,x'^1,...,x^k,x'^k)| \leq O((x^1-x'^1)^2+...+(x^k-x'^k)^2)$ as $x^i
\rightarrow x'^i$, for  almost every fixed $\omega \in \Omega$ and uniformly on compact subsets of $\mathbb{R}_+ \times \mathbb{R}^{2k}$, the processes
$$Z^{\alpha}_t=\int_0^t{H^{\alpha}_{i,s}dX^i_s}+\sum_{0 \leq s \leq t}\Phi^{\alpha}(s,\omega,X^1_{s_-},X^1_s,...,X^k_{s_-},X^k_s)$$
are semimartingales. Furthermore
\begin{eqnarray}
&\Delta Z^{\alpha}_t=H^{\alpha}_{i,t} \Delta X^i_t + \Phi^{\alpha}(t,\omega,X^1_{t_-},X^1_t,...,X^k_{t_-},X^k_t),\label{equation_gauge4}\\
&[Z^{\alpha},Z^{\beta}]^c_t=\int_0^t{H^{\alpha}_{i,s}H^{\beta}_{j,s}d[X^i,X^j]^c_s},&\label{equation_gauge2}\\
&\int_0^t{K_{\alpha,s}dZ^{\alpha}_s}=\int_0^t{K_{\alpha,s}H^{\alpha}_{i,s}dX^i_s}+\sum_{0 \leq s \leq t}K_{\alpha,s}\Phi^{\alpha}(s,\omega,X^1_{s_-},X^1_s,....,X^k_{s_-},X^k_s).&\label{equation_gauge3}
\end{eqnarray}
\end{lemma}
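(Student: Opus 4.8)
The plan is to split each $Z^{\alpha}$ into a stochastic integral part and a pure-jump part, and then to show that the pure-jump part is a genuine finite variation process, from which the semimartingale property and all three identities follow. Set $M^{\alpha}_t=\int_0^t H^{\alpha}_{i,s}dX^i_s$ and $J^{\alpha}_t=\sum_{0\leq s\leq t}\Phi^{\alpha}(s,\omega,X^1_{s_-},X^1_s,\dots,X^k_{s_-},X^k_s)$, so that $Z^{\alpha}=M^{\alpha}+J^{\alpha}$. The process $M^{\alpha}$ is a semimartingale by the standard theory of integration of predictable processes against semimartingales, so everything reduces to controlling $J^{\alpha}$.

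The heart of the argument, and the step I expect to be the main obstacle, is proving that the jump sum defining $J^{\alpha}$ converges absolutely and defines a finite variation process. First I would localize by a sequence of stopping times so that the processes $X^1,\dots,X^k$ take values in a fixed compact set, on which the $O$-bound holds uniformly; this yields, for almost every $\omega$, a constant $C$ and an $\varepsilon>0$ with $|\Phi^{\alpha}(s,\omega,X^1_{s_-},X^1_s,\dots)|\leq C\sum_i(\Delta X^i_s)^2$ whenever $\sum_i(\Delta X^i_s)^2\leq\varepsilon$. I would then split the jump times into the \emph{large} jumps, where $\sum_i(\Delta X^i_s)^2>\varepsilon$, and the \emph{small} jumps. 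On any compact time interval the large jumps are finite in number, because each $X^i$ is c\acc{a}dl\acc{a}g and the set of times where some $|\Delta X^i_s|$ exceeds a fixed threshold is finite on compacts, so their contribution to $J^{\alpha}$ is a finite sum and hence of finite variation. For the small jumps the quadratic bound gives $\sum_{0\leq s\leq t}|\Phi^{\alpha}(s,\omega,\dots)|\leq C\sum_i\sum_{0\leq s\leq t}(\Delta X^i_s)^2\leq C\sum_i[X^i,X^i]_t<\infty$, so the small-jump contribution converges absolutely and is also of finite variation. Combining the two parts shows that $J^{\alpha}$ is a finite variation pure-jump process, hence a semimartingale, and therefore so is $Z^{\alpha}$.

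Once $J^{\alpha}$ is known to be a finite variation pure-jump process the three identities are routine. The jump identity \refeqn{equation_gauge4} follows from $\Delta M^{\alpha}_t=H^{\alpha}_{i,t}\Delta X^i_t$ (a standard property of the stochastic integral) together with $\Delta J^{\alpha}_t=\Phi^{\alpha}(t,\omega,X^1_{t_-},X^1_t,\dots,X^k_{t_-},X^k_t)$, which is immediate from the definition of $J^{\alpha}$ as a sum over jump times. For the continuous bracket \refeqn{equation_gauge2}, I would use that a finite variation pure-jump process has trivial continuous martingale part and so contributes nothing to $[\cdot,\cdot]^c$; the identity then reduces to the bilinearity formula $[\int H^{\alpha}_i dX^i,\int H^{\beta}_j dX^j]^c_t=\int_0^t H^{\alpha}_{i,s}H^{\beta}_{j,s}d[X^i,X^j]^c_s$ for the continuous part of the bracket of two stochastic integrals. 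Finally, for \refeqn{equation_gauge3} I would invoke linearity of the stochastic integral to write $\int_0^t K_{\alpha,s}dZ^{\alpha}_s=\int_0^t K_{\alpha,s}dM^{\alpha}_s+\int_0^t K_{\alpha,s}dJ^{\alpha}_s$; the first term equals $\int_0^t K_{\alpha,s}H^{\alpha}_{i,s}dX^i_s$ by associativity of the integral, while the second, since $J^{\alpha}$ is of finite variation and purely discontinuous, equals $\sum_{0\leq s\leq t}K_{\alpha,s}\Delta J^{\alpha}_s=\sum_{0\leq s\leq t}K_{\alpha,s}\Phi^{\alpha}(s,\omega,X^1_{s_-},X^1_s,\dots,X^k_{s_-},X^k_s)$.
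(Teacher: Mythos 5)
Your proof is correct and takes essentially the same route as the paper: decompose $Z^{\alpha}$ into the stochastic integral plus the jump sum, show the jump sum has paths of finite variation via the quadratic bound together with $\sum_{0\leq s\leq t}(\Delta X^i_s)^2 \leq [X^i,X^i]_T < +\infty$, and then read off the three identities from the fact that a finite variation pure-jump process neither contributes to $[\cdot,\cdot]^c$ nor integrates as anything other than a sum over its atoms. Your explicit large-jump/small-jump split is just a more careful rendering of the step the paper compresses into the existence of the pathwise constant $C(T,\omega)$ (boundedness of càdlàg paths on compacts plus continuity of $\Phi^{\alpha}$), so the two arguments coincide in substance.
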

\begin{proof}
Since $\int_0^t{H^{\alpha}_{i,s}dX^i_s}$ are semimartingales, we only need  to prove that \\
$\tilde{Z}^{\alpha}_t=\sum_{0 \leq s \leq t}\Phi^{\alpha}(s,\omega,X^1_{s_-},X^1_s,...,X^k_{s_-},X^k_s)$ is a
c\acc{a}dl\acc{a}g process of bounded variation.\\
If $\tilde{Z}^{\alpha}$ are of bounded variation, then we can prove  \refeqn{equation_gauge4}, \refeqn{equation_gauge2} and \refeqn{equation_gauge3}.
In fact, if $\tilde{Z}^{\alpha}$ are of bounded variation, they do not contribute to the brackets $[Z^{\alpha},Z^{\beta}]^c$. Thus
$[Z^{\alpha},Z^{\beta}]^c=[Z^{\alpha}-\tilde{Z}^{\alpha},Z^{\beta}-\tilde{Z}^{\beta}]^c$ and we obtain equation \refeqn{equation_gauge2}.
Furthermore, since $\tilde{Z}^{\alpha}$ is a sum of pure jumps processes, $\tilde{Z}^{\alpha}$  is a pure jump process. Then we get equations
\refeqn{equation_gauge4} and \refeqn{equation_gauge3} by using that $\tilde{Z}^{\alpha}$ are  pure jump processes of bounded variation and that
the measures $d \tilde{Z}^{\alpha}$ are pure atomic measures.\\
The fact that $\tilde{Z}^{\alpha}$ is of bounded variation can be established by exploiting the standard argument used for proving It\^o formula
(see, e.g., \cite[Chapter II, Section 7]{Protter1990}).\\
Indeed, if $[X^1,X^1]_t(\omega),...,[X^k,X^k]_t(\omega) < + \infty$ for all $t \in
\mathbb{R}_+$, then $\sum_i\sum_{0 \leq s \leq t}(\Delta X^i_s)^2(\omega) \leq \sum_i [X^i,X^i]_T(\omega) < + \infty$. Since $X^i$ are
c\acc{a}dl\acc{a}g they are locally bounded and so, for all $t < T$ and for almost every $\omega \in \Omega$, there exists a $C(T,\omega)$ such
that
\begin{eqnarray*}
\operatorname{var}_{[0,T]}(\tilde{Z}^{\alpha}_t(\omega))& \leq & \sum_{0 \leq s \leq t} |\Phi^{\alpha}(s,\omega,X^1_{s_-},X^1_s,...,X^k_{s_-},X^k_s)|\\
&\leq & C(T,\omega) \left(\sum_i\sum_{0 \leq s \leq t}(\Delta
X^i_s)^2 \right) < + \infty.
\end{eqnarray*}
\hfill\end{proof}\\

\begin{remark}\label{remark_geometrical1}
Let $ \mathcal{K}$ be a metric space, $K \in \mathcal{K}$ be a locally bounded predictable process and $\tilde{\Phi}: \mathbb{R}_+ \times
\mathcal{K} \times \mathbb{R}^{2k} \rightarrow \mathbb{R}$ be a $C^2$ function in $\mathbb{R}^{2k}$ variables such that
$\tilde{\Phi}$ and all its derivatives are continuous in all their arguments. If
$\tilde{\Phi}(\cdot,\cdot,x^1,x^1,...,x^k,x^k)=\partial_{x'^i}(\tilde{\Phi})(\cdot,\cdot,x^1,x^1,...,x^k)=0$
 for $i=1,...k$, then $\Phi(t,\omega,...)=\tilde{\Phi}(t,K_t(\omega),...)$ satisfies the
hypothesis of Lemma \ref{lemma_geometrical1}.
\end{remark}

\begin{proof}[Proof of Theorem \ref{theorem_geometrical1}]
The proof is given for $M,M'=\mathbb{R}^m,N,N'=\mathbb{R}^n$ (or more generally $M,M',N,N'$ open subsets of
$\mathbb{R}^m,\mathbb{R}^n$). The general case follows exploiting an embedding of $M,M',N,N'$ in $\mathbb{R}^{k_M},\mathbb{R}^{k_N}$
and extending $\Phi,\tilde{\Phi}$ to diffeomorphisms defined in a neighbourhood of the image of $M,M',N,N'$ with respect to these embeddings. \\
In order to simplify the proof we consider the two special cases $M=M'$, $\Phi=Id_M$ and $N=N'$, $\tilde{\Phi}=Id_N$.
The general case can be obtained combining  these two cases.\\
If $M=M'$ and $\Phi=Id_M$,  putting
$\tilde{Z}=\tilde{\Phi}(Z)$, so that  $Z=\tilde{\Phi}^{-1}(\tilde {Z})$,
by It\^o's formula for semimartingales with jumps, Lemma
\ref{lemma_geometrical1} and Remark \ref{remark_geometrical1} we
have
\begin{eqnarray*}
Z^{\alpha}_t-Z^{\alpha}_0&=&\int_0^t{\partial_{\tilde{z}^{\beta}}(\tilde{\Phi}^{-1})^{\alpha}(\tilde{Z}_{s_-})d\tilde{Z}^{\beta}_s+\frac{1}{2}\partial_{\tilde{z}^{\beta}\tilde{z}^{\gamma}}(\tilde{\Phi}^{-1})^{\alpha}(\tilde{Z}_{s_-}) d[\tilde{Z}^{\beta},\tilde{Z}^{\gamma}]^c_s}+\\
&&+\sum_{0\leq s \leq t}((\tilde{\Phi}^{-1})^{\alpha}(\tilde{Z}_s)-(\tilde{\Phi}^{-1})^{\alpha}(\tilde{Z}_{s_-})-\partial_{\tilde{z}^{\beta}}(\tilde{\Phi}^{-1})^{\alpha}(\tilde{Z}_{s_-})\Delta\tilde{Z}^{\beta}_s)\\
d[Z^{\alpha},Z^{\beta}]^c_t&=&\partial_{\tilde{z}^{\gamma}}(\tilde{\Phi}^{-1})^{\alpha}(Z_{s_-})
\partial_{\tilde{z}^{\delta}}(\tilde{\Phi}^{-1})^{\beta}(Z_{s_-})d[\tilde{Z}^{\gamma},\tilde{Z}^{\delta}]^c_t\\
\Delta
Z^{\alpha}_t&=&(\tilde{\Phi}^{-1})^{\alpha}(\tilde{Z}_t)-(\tilde{\Phi}^{-1})^{\alpha}(\tilde{Z}_{t_-}).
\end{eqnarray*}
The conclusion of  Theorem \ref{theorem_geometrical1} follows using the definition of solution of the geometrical SDE $\overline{\Psi}_{K_t}$, Lemma \ref{lemma_geometrical1} and the chain rule
for derivatives.\\
Suppose now that $N=N'$ and $\tilde{\Phi}=Id_N$. Putting $X'=\Phi(X)$,
by It\^o's formula we obtain
\begin{eqnarray*}
X'^i_t-X'^i_0&=&\int_0^t{\partial_{x^j}(\Phi^i)(X_{s_-})dX^j_s}+\frac{1}{2}\int_0^t{\partial_{x^jx^h}(\Phi^i)(X_{s_-})d[X^j,X^h]^c_s}+\\
&&+\sum_{0\leq s \leq
t}(\Phi^i(X_s)-\Phi^i(X_{s_-})-\partial_{x^j}(\Phi^i)(X_{s_-})\Delta
X^j_s).
\end{eqnarray*}
Furthermore, by definition of solutions to the geometrical SDE
$\overline{\Psi}$ and by  Lemma \ref{lemma_geometrical1} we have
\begin{eqnarray*}
dX^i_s&=&\partial_{z'^{\alpha}}(\overline{\Psi}^i_{K_s})(X_{s_-},Z_{s_-},Z_{s_-})
dZ^{\alpha}_s+\frac{1}{2}\partial_{z'^{\alpha}z'^{\beta}}(\overline{\Psi}_{K_s}^i)(X_{s_-},Z_{s_-},Z_{s_-})d[Z^{\alpha},Z^{\beta}]^c_s +\\
&&+\overline{\Psi}_{K_s}^i(X_{s_-},Z_{s},Z_{s_-})-\overline{\Psi}_{K_s}^i(X_{s_-},Z_{s_-},Z_{s_-})-
\partial_{z'^{\alpha}}(\overline{\Psi}_{K_s}^i)(X_{s_-},Z_{s_-},Z_{s_-})\Delta Z^{\alpha}_s,\\
d[X^i,X^j]^c_s&=&\partial_{z'^{\alpha}}(\overline{\Psi}_{K_s}^i)(X_{s_-},Z_{s_-},Z_{s_-})
\partial_{z'^{\beta}}(\overline{\Psi}_{K_s}^i)(X_{s_-},Z_{s_-},Z_{s_-})d[Z^{\alpha},Z^{\beta}]^c_s\\
\Delta X^i_s&=&\overline{\Psi}_{K_s}^i(X_{s_-},Z_s,Z_{s_-})-\overline{\Psi}_{K_s}^i(X_{s_-},Z_{s_-},Z_{s_-}).
\end{eqnarray*}
Using the previous relations, the fact that $X=\Phi^{-1}(X')$ and the chain rule for derivatives we get the thesis. ${}\hfill$ \end{proof}

\subsection{A comparison with other approaches}\label{subsection_comparison}

Since the geometrical approach of \cite{Cohen1996} is not widely known, but nevertheless it  is essential in our investigation of symmetries, in
this subsection we compare the definition of geometrical SDEs driven by semimartingales with jumps with some more usual definitions of SDEs
driven by c\acc{a}dl\acc{a}g processes appearing in the literature. We make the comparison with different kinds of SDEs with jumps:
\begin{itemize}
\item affine-type SDEs of the type studied in \cite[Chapter
V]{Protter1990} and \cite[Chapter 5]{Bichteler2002}, \item Marcus-type SDEs (see \cite{Protter1995,Marcus1978,Marcus1981}), \item SDEs driven
by L\'evy processes with smooth coefficients (see, e.g., \cite{Applebaum2004,Kunita2004}), \item smooth iterated random functions (see, e.g.,
\cite{Arnold1998,Diaconis1999}).
\end{itemize}
In the following we assume,  for simplicity, that $M$ and $N$ are
open subsets of $\mathbb{R}^m$ and $\mathbb{R}^n$ respectively.

\subsubsection{Affine-type SDEs}

We briefly describe the affine type SDEs as proposed, e.g., in \cite[Chapter V]{Protter1990}. In particular we show how it is possible to rewrite them according to our geometrical setting.\\
Let $(Z^1,...,Z^n)$ be a semimartingale in $N$ and let $\sigma:M
\rightarrow \matr(m,n)$ be a smooth function taking values in the
set of $m \times n$ matrices with real elements. We consider the SDE defined by
\begin{equation}\label{equation_affine}
dX^i_t=\sigma^i_{\alpha}(X_t) dZ^{\alpha}_t,
\end{equation}
where $\sigma^i_j$ are the components of the matrix $\sigma$. If $Z^1_t=t$  and $Z^2,...,Z^n$ are independent Brownian motions, we have the usual diffusion processes with drift $(\sigma^1_1,...,\sigma^m_1)$ and diffusion matrix $(\sigma^i_{\alpha})|_{\stackrel{i=1,...,m}{\alpha=2,...,n}}$.\\
The previous affine-type SDE can be rewritten as a geometrical SDE
defined by the function $\overline{\Psi}$
$$\overline{\Psi}(x,z',z)=x+\sigma(x) \cdot (z'-z),$$
or, in coordinates,
$$\overline{\Psi}^i(x,z',z)=x^i+\sigma^i_{\alpha}(x)(z'^{\alpha}-z^{\alpha}).$$
In fact, by definition of geometrical SDE
$\overline{\Psi}$, we have
\begin{eqnarray*}
X^i_t-X^i_0&=&\int_0^t{\partial_{z'^{\alpha}}(\overline{\Psi}^i)(X_{s_-},Z_{s_-},Z_{s_-})dZ^{\alpha}_s}+\frac{1}{2}\int_0^t{\partial_{z'^{\alpha}z'^{\beta}}(\overline{\Psi}^i)(X_{s_-},Z_{s_-},Z_{s_-})d[Z^{\alpha},Z^{\beta}]_s}\\
&+&\sum_{0\leq s \leq t}\{\overline{\Psi}^i(X_{s_-},Z_{s},Z_{s_-})-\overline{\Psi}^i(X_{s_-},Z_{s_-},Z_{s_-})-\partial_{z'^{\alpha}}(\overline{\Psi}^i)(X_{s_-},Z_{s_-},Z_{s_-})\Delta Z^{\alpha}_s\},\\
&=&\int_0^t{\sigma^i_{\alpha}(X_{s_-})dZ^{\alpha}_s}+\sum_{0\leq s \leq t}\{\sigma^i_{\alpha}(X_{s_-})(Z^{\alpha}_s-Z^{\alpha}_{s_-})-\sigma^i_{\alpha}(X_{s_-})\Delta Z^{\alpha}_s\}\\
&=&\int_0^t{\sigma^i_{\alpha}(X_{s_-})dZ^{\alpha}_s}.
\end{eqnarray*}

\subsubsection{Marcus-type SDEs}

The Marcus-type SDEs with jumps, initially  proposed by Marcus in \cite{Marcus1978,Marcus1981} for semimartingales with finitely many jumps in
any compact interval, have been extended to the case of general real semimartingales in \cite{Protter1995}. The special property of this family of
SDEs is their natural behaviour with respect to diffeomorphisms. \\
Given a manifold $M$ and a global cartesian coordinate system $x^i$ on $M$, we consider $n$ smooth vector fields $Y_1,...,Y_n$ on $M$ of the form  $Y_{\alpha}=Y^i_{\alpha} \partial_{x^i}, \alpha=1,...,n$. If the
functions $Y^i_{\alpha}$ grow at most linearly at infinity, the flow of $Y_{\alpha}$ is defined for any time. Therefore,  for any
$z=(z^1,...,z^n) \in \mathbb{R}^n$,  we introduce the function
$$\Psi(x,z)=\exp(z^{\alpha}Y_{\alpha})(x),$$
where $\exp(Y)$ is the exponential map with respect to the vector field $Y$, i.e. the map associating with  any $x \in M$ its  evolute at  time $1$  with respect to the flow defined by the vector field $Y$.\\
The solution $X $ with values in $ M$ (we shall shortly write $X\in M$) to the Marcus-type SDE defined by the
vector fields $Y_1,...,Y_n$ with respect to the semimartingales
$(Z^1,...,Z^n)$ is the unique semimartingale $X \in M$ such that
\begin{eqnarray*}
X^i_t-X^i_0&=&\int_0^t{Y^i_{\alpha}(X_{s_-})dZ^{\alpha}_s}+\frac{1}{4}\int_0^t{(Y_{\beta}(Y^i_{\alpha})(X_{s_-})+Y_{\alpha}(Y^i_{\beta})(X_{s_-}))d[Z^{\alpha},Z^{\beta}]_s}+\\
&&\sum_{0 \leq s \leq
t}\{\Psi^i(X_{s_-},Z_s-Z_{s_-})-X^i_{s_-}-Y^i_{\alpha}(X_{s_-})\Delta
Z^{\alpha}_s\}.
\end{eqnarray*}
We note that the previous equation depends only on $Y_1,...,Y_n$, which means that if $\Phi:M \rightarrow M'$ is a diffeomorphism, the semimartingale $\Phi(X)$ solves the Marcus-type SDE defined by the vector fields $\Phi_*(Y_1),...,\Phi_*(Y_n)$ (see \cite{Protter1995}). \\
The Marcus-type SDE is a special form of geometrical SDE with
defining map given by
$$\overline{\Psi}(x,z',z)=\Psi(x,z'-z).$$
Indeed, by definition of $\Psi$ and $\overline{\Psi}$, we have
\begin{eqnarray*}
&\partial_{z'^{\alpha}}(\overline{\Psi}^i)(x,z,z)=\partial_{z^{\alpha}}(\Psi^i)(x,0)=Y^i_{\alpha}&\\
&\partial_{z'^{\alpha}z'^{\beta}}(\overline{\Psi}^i)(x,z,z)=\partial_{z^{\alpha}z^{\beta}}(\Psi^i)(x,0)=\frac{1}{2}(Y_{\beta}(Y^i_{\alpha})+Y_{\alpha}(Y_{\beta}^i)).&
\end{eqnarray*}

\subsubsection{Smooth SDEs driven by a L\'evy process}\label{subsubsection_Levy}

In this section we describe a particular form of SDEs driven by $\mathbb{R}^n$-valued L\'evy processes (see, e.g.,
\cite{Applebaum2004,Kunita2004}). By definition, an $\mathbb{R}^n$-valued L\'evy process $(Z^1,...,Z^n)$ can be decomposed into the sum of Brownian
motions and compensated Poisson processes defined on $\mathbb{R}^n$. In particular, a L\'evy process on $\mathbb{R}^n$ can be identified by a
vector $b_0=(b_0^1,...,b_0^n) \in \mathbb{R}^n$, an $n \times n$ matrix $A_0^{\alpha\beta}$ (with real elements) and a positive $\sigma$-finite measure $\nu_0$
defined on $\mathbb{R}^n$ (called L\'evy measures, see, e.g., \cite{Applebaum2004,Sato1999}) such that
$$\int_{\mathbb{R}^n}{\frac{|z|^2}{1+|z|^2}\nu_0(dz)} < + \infty.$$
By  the L\'evy-It\^o decomposition, the triplet $(b,A,\nu)$ is such that there exist an $n$ dimensional Brownian motion $(W^1,...,W^n)$ and a
Poisson measure $P(dz,dt)$ defined on $\mathbb{R}^n$ such that
\begin{eqnarray*}
Z^{\alpha}_t&=&b^{\alpha}_0 t+ C^{\alpha}_{\beta} W^{\beta}_t+\int_0^t{\int_{|z| \leq 1}{z^{\alpha}(P(dz,ds)-\nu_0(dz)ds)}}+\\
&&+\int_0^t{\int_{|z|>1}{z^{\alpha}P(dz,ds)}}.
\end{eqnarray*}
where $A^{\alpha\beta}_0=\sum_{\gamma}C^{\alpha}_{\gamma}C^{\beta}_{\gamma}$. Henceforth we suppose for simplicity that $b^1=1$ and
$b^{\alpha}_0=0$ for $\alpha>1$, that there exists
 $n_1$ such that $A^{\alpha\beta}_0=\delta^{\alpha\beta}$ for $1< \alpha ,\beta \leq n_1$ and
 $A^{\alpha\beta}_0=0$ for $\alpha$ or $\beta$ in $\{1,n_1+1,...,n\}$, and finally  that
 $\int_0^t{\int_{|z| \leq 1}{z^{\alpha}(P(dz,ds)-\nu_0(dz)ds)}}=0$ and $\int_0^t{\int_{|z|>1}{z^{\alpha}P(dz,ds)}}=0$ for $\alpha \leq n_1$. \\
Consider a vector field $\mu$ on $M$, a set of $n_1-1$ vector
fields $\sigma=(\sigma_2,...,\sigma_{n_{1}})$ on $M$ and a smooth
(both in $x $ and $z$) function $F:M \times \mathbb{R}^{n-n_1}
\rightarrow \mathbb{R}^m$ such that $F(x,0)=0$. We say that a
semimartingale $X \in M$ is a solution to the smooth SDE
$(\mu,\sigma,F)$ driven by the $\mathbb{R}^n$ L\'evy process
$(Z^1,...,Z^n)$ if
\begin{eqnarray*}
X^i_t-X^i_0&=&\int_0^t{\mu^i(X_{s_-})dZ^1_s}+\int_0^t{\sum_{\alpha=2}^{n_1}\sigma^i_{\alpha}(X_{s_-})dZ_s^{\alpha}}+\\
&&+\int_0^t{\int_{\mathbb{R}^{n-n_1}}{F^i(X_{s_-},z)(P(dz,ds)-I_{|z|\leq 1}\nu_0(dz)ds)}},
\end{eqnarray*}
where $I_{|z|\leq 1}$ is the indicator function of the set
$\{|z|\leq 1\}\subset \mathbb{R}^{n-n_1}$. Define the function
$$\overline{\Psi}^i(x,z',z)=x^i+\tilde{\mu}^i(x)(z'^1-z^1)+\sigma^i_{\alpha}(x)(z'^{\alpha}-z^{\alpha})+F^i(x,z'-z),$$
where
$$\tilde{\mu}^i(x)=\mu^i(x)-\int_{|z|\leq 1}{(F^i(x,z)-\partial_{z^{\alpha}}(F^i)(x,z)z^{\alpha})\nu_0(dz)}.$$
It is easy to see that any solution $X$ to the smooth SDE
$(\mu,\sigma,F)$ driven by the L\'evy process $(Z^1,...,Z^n)$ is
also solution to the geometrical SDE $\overline{\Psi}$ driven by
the $\mathbb{R}^n$ semimartingale $(Z^1,...,Z^n)$ and conversely.

\begin{remark}
In the theory of SDEs driven by $\mathbb{R}^n$-valued L\'evy processes the usual assumption  is
that $F$ is Lipschitz in $x$ and measurable in $z$. Our assumption on smoothness of $F$ in both $x,z$ is thus  a stronger requirement. For this reason
we say that $(\mu,\sigma,F)$ is a \emph{smooth SDE} driven by a L\'evy process.
\end{remark}

\subsubsection{Iterated random smooth functions}\label{subsection_iterated}

In the previous sections we have only  considered continuous time processes $Z_t$. Let us now take $Z$ as a discrete time
adapted process, i.e. $Z$ is a sequence of random variables $Z_0,Z_1,...,Z_n,...$ defined on $N$. We can consider $Z$ as a c\acc{a}dl\acc{a}g
continuous time process $Z_t$ defined by
$$Z_t=Z_n \text{ if } n \leq t < n+1.$$
Since the process $Z$ is a pure jump process with a finite number of jumps in any compact interval of $\mathbb{R}_+$, $Z$ is a
semimartingale. If $(X,Z)$ is a solution of the geometrical SDE  $\overline{\Psi}$, we have that
\begin{equation}\label{equation_difference}
X_n=\overline{\Psi}(X_{n-1},Z_n,Z_{n-1})
\end{equation}
and $X_t=X_n$ if $n \leq t < n+1$. The process $X$ can be viewed as a discrete time process defined by the recursive relation
\refeqn{equation_difference}. These processes are special forms of \emph{iterated random functions} (see, e.g.,
\cite{Arnold1998,Diaconis1999,Schreiber2012}) and  this kind of equations is very important in time series analysis (see, e.g.,
\cite{Brockwell1991,Shumway2006}) and in numerical simulation of SDEs (see, e.g., \cite{Kloeden1992} for simulation of SDEs and
\cite{DeVecchi2017} for the concept of strong symmetry of a discretization scheme). In this case we do not need that $\overline{\Psi}$ is smooth
in all its variables and that $\overline{\Psi}(x,z,z)=x$ for any $x \in M$ and $z \in N$. In the case of a discrete time semimartingale $Z_t$
these two conditions can be skipped and
we can consider more general iterated random functions defined by relation \refeqn{equation_difference}.\\
An important example of iterated random functions can be obtained by considering $M=\mathbb{R}^m$, $N=GL(m) \times
\mathbb{R}^m$ and the functions
$$\overline{\Psi}(x,z',z)=(z'_1 \cdot z_1^{-1}) \cdot x + (z'_2-z_2),$$
where $(z_1,z_2) \in GL(m) \times \mathbb{R}^m$. Moreover, taking  two sequences of random variables $A_0,...,A_n,... \in GL(n)$ and
$B_0,...,B_n,... \in \mathbb{R}^m$, we  define
$$Z_n=\left(A_n \cdot A_{n-1} \cdot ....\cdot A_0, B_n +B_{n-1} + ....+ B_0  \right).$$
The iterated random functions associated with the SDE $\overline{\Psi}$ is
$$X_n=A_n \cdot X_{n-1}+B_{n}.$$
This model is very well studied (see, e.g., \cite{Arnold1998,Babillot1997,Kesten1973}). In particular the well known  ARMA model is of this form
(see, e.g., \cite{Brockwell1991,Shumway2006}).

\subsection{Canonical SDEs}

In this section, in order to generalize  the well known noise change property of affine-type SDEs driven by c\acc{a}dl\acc{a}g semimartingales,
we introduce  the concept of canonical SDEs driven by a process on a Lie group $N$. If $M=\mathbb{R}^m$ and $N=\mathbb{R}^n$ and we consider the
affine SDE given by
$$dX^i_t=\sigma^i_{\alpha}(X_{t_-}) dZ^{\alpha}_t,$$
we can define a new semimartingale on $N$ given by
\begin{equation}\label{equation_semimartingales_change1}
d\tilde{Z}^{\alpha}_t=B^{\alpha}_{\beta,t}dZ^{\beta}_t,
\end{equation}
where $B=(B^{\alpha}_{\beta})$ is a locally bounded predictable
process taking values in $GL(n)$, and  rewrite the affine SDE in terms of the
semimartingale $\tilde{Z}$ in the following way
\begin{equation}\label{equation_property}
dX^i_t=\sigma^{i}_{\alpha}(X_{t_-})(B^{-1})^{\alpha}_{\beta,t} d
\tilde{Z}^{\beta}_t,
\end{equation}
where $B^{-1}$ is the inverse matrix of $B$. Since this property, essential in the
definition of symmetries of a canonical SDE,  has no counterpart for general geometrical  SDEs,  we restrict our attention to a special class of geometrical SDEs that we call canonical (geometrical) SDEs.
The first three kinds of SDEs proposed in Subsection \ref{subsection_comparison} are canonical
SDEs in the above sense.\\
Considering now a (general) Lie group $N$ and a semimartingale $Z$ on $N$,
a natural definition of jump can be given.
Indeed, if $\tau$ is a stopping time, we define the jump at time
$\tau$ as the random variable $\Delta Z_{\tau}$ taking values on
$N$ such that
$$\Delta Z_{\tau}=Z_{\tau} \cdot (Z_{\tau_-})^{-1},$$
where $\cdot$ is the multiplication in the group $N$. In order to define a special class of equations that, in some sense,
depends only on the
jumps $\Delta Z_t$ of a process $Z$ defined on a Lie group, we consider a function $\Psi$ of the form
$$\Psi_{\cdot}(\cdot,\cdot):M \times N \times \mathcal{K} \rightarrow M,$$
such that $\Psi_k(x,1_N)=x$ for any $k$ in a metric space $\mathcal{K}$ and $x \in M$, and we introduce the function $\overline{\Psi}_k$ defining the corresponding
 geometrical SDE as
$$\overline{\Psi}_k(x,z',z)=\Psi_k(x,z'\cdot z^{-1})={\Psi}_k(x,\Delta z).$$
If $(X,Z)$ solves the SDE defined by this $\overline{\Psi}_k$, we write
$$dX_t=\Psi_{K_t}(d Z_t),$$
and we say that $(X,Z)$ is a solution to the canonical SDE  $\Psi_{K_t}$.
For canonical SDEs it is possible to consider a sort of generalization of the semimartingales change rule \refeqn{equation_property}.\\
Suppose that $M=\tilde{N}$ for some Lie group $\tilde{N}$ and
consider the smooth function
$$\Xi_{\cdot}(\cdot):N \times \mathcal{G} \rightarrow \tilde{N},$$
where $\mathcal{G}$ is a Lie group, which satisfies the
relation $\Xi_g(1_N)=1_{\tilde{N}}, \forall g \in \mathcal{G}$. We define the map
$$\tilde{\Psi}_g(x,z)=\Xi_g(z) \cdot x.$$
If $Z$ is a semimartingale on $N$,  we define the transformed semimartingale on $\tilde{N}$ by
\begin{equation}\label{equation_semimartingales_change2}
d\tilde{Z_t}=\Xi_{G_t}(dZ_t)
\end{equation}
as the unique solution $(\tilde{Z},Z)$  to the equation
$$d\tilde{Z}_t=\tilde{\Psi}_{G_t}(d Z_t),$$
with initial condition $\tilde{Z}_0=1_{\tilde{N}}$. Before proving further results about  transformation \refeqn{equation_semimartingales_change2}, we
 show that the semimartingales change \refeqn{equation_semimartingales_change1} is a particular case of
\refeqn{equation_semimartingales_change2}. In fact, for $\tilde{N}=N=\mathbb{R}^n$, any map $\Xi_{\cdot}:\mathbb{R}^n \times \mathcal{G}
\rightarrow \mathbb{R}^n$ gives the canonical SDE defined by the function
$$\tilde{\Psi}_g(\tilde{z},z)=\tilde{z}+\Xi_g(z).$$
This means that equation \refeqn{equation_semimartingales_change2} is explicitly given by the relation
\begin{equation}\label{equation_semimartingales_change3}
\begin{array}{ccl}
\tilde{Z}_t&=&\int_0^t{\partial_{z^{\alpha}}(\Xi_{G_s})(0)dZ^{\alpha}_s}+\frac{1}{2}\int_0^t{\partial_{z^{\alpha}z^{\beta}}(\Xi_{G_s})(0)d[Z^{\alpha},Z^{\beta}]_s^c}+\\
&&+\sum_{0\leq s \leq t}(\Xi_{G_s}(\Delta
Z_s)-\partial_{z^{\alpha}}(\Xi_{G_s})(0)\Delta Z^{\alpha}_s).
\end{array}
\end{equation}
If $\mathcal{G}=GL(n)$ and $\Xi_g(z)=\Xi_B(z)=B \cdot z$, since both $\partial_{z^{\alpha}z^{\beta}}(\Xi_{B_t})(0)$ and $(\Xi_{G_s}(\Delta
Z_s)-\partial_{z^{\alpha}}(\Xi_{G_s})(0)\Delta Z^{\alpha}_s)$ are equal to zero, we obtain equation \refeqn{equation_semimartingales_change1}.

\begin{remark}
When $N=\mathbb{R}^n$ the
right-hand side of equation
\refeqn{equation_semimartingales_change3} does not depend on
$\tilde{Z}$.
\end{remark}

\begin{theorem}\label{theorem_gauge1}
Let $N,\tilde{N}$ be two Lie groups and suppose that
$(X,\tilde{Z})$ (where $\tilde{Z}$ is defined on $\tilde{N}$) is a
solution to the canonical SDE $\Psi_{K_t}$. If
$d\tilde{Z}_t=\Xi_{G_t}(dZ_t)$,  then $(X,Z)$ is a solution to the
canonical SDE defined by
$$\hat{\Psi}_{k,g}(x,z)=\Psi_k(x,\Xi_g(z)).$$
\end{theorem}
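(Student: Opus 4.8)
The plan is to verify directly that $(X,Z)$ satisfies the defining integral equation \refeqn{equation_manifold2} of the geometrical SDE attached to the canonical function $\hat{\Psi}_{k,g}$, i.e. with $\overline{\hat{\Psi}}_{k,g}(x,z',z)=\Psi_k(x,\Xi_g(z'\cdot z^{-1}))$. As in the proof of Theorem \ref{theorem_geometrical1}, I would first reduce to the case in which $M,N,\tilde{N}$ are open subsets of Euclidean spaces, embedding the Lie groups and extending $\Xi$ and $\Psi$ to neighbourhoods of their images; all processes then become $\mathbb{R}$-valued semimartingales and all maps smooth, so that It\^o's formula and Lemma \ref{lemma_geometrical1} apply.

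The conceptual key is the pointwise composition identity
\begin{equation*}
\overline{\hat{\Psi}}_{k,g}(x,z',z)=\overline{\Psi}_k\big(x,\overline{\tilde{\Psi}}_g(\tilde{z},z',z),\tilde{z}\big),\qquad \overline{\tilde{\Psi}}_g(\tilde{z},z',z)=\Xi_g(z'\cdot z^{-1})\cdot\tilde{z},
\end{equation*}
which holds for \emph{every} $\tilde{z}\in\tilde{N}$, since the right-hand side equals $\Psi_k(x,\Xi_g(z'\cdot z^{-1}))$ independently of $\tilde{z}$. Evaluating along the solution at $\tilde{z}=\tilde{Z}_{s_-}$ and differentiating in $z'$ on the diagonal $z'=z=Z_{s_-}$ reduces the first- and second-order coefficients of $\overline{\hat{\Psi}}_{k,g}$ to those of $\overline{\Psi}_k$ composed, by the chain rule, with the coefficients of $\overline{\tilde{\Psi}}_g$; here one uses that $\overline{\tilde{\Psi}}_g(\tilde{Z}_{s_-},Z_{s_-},Z_{s_-})=\Xi_g(1_N)\cdot\tilde{Z}_{s_-}=\tilde{Z}_{s_-}$.

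The verification then splits into three matchings. For the jumps, the total jump of $X$ as a solution driven by $\tilde{Z}$ is $\Delta X_t=\overline{\Psi}_{K_t}(X_{t_-},\tilde{Z}_t,\tilde{Z}_{t_-})-X_{t_-}=\Psi_{K_t}(X_{t_-},\Delta\tilde{Z}_t)-X_{t_-}$, and, since $\tilde{Z}$ solves $d\tilde{Z}=\Xi_G(dZ)$, its group jump is $\Delta\tilde{Z}_t=\Xi_{G_t}(\Delta Z_t)$, whence $\Delta X_t=\Psi_{K_t}(X_{t_-},\Xi_{G_t}(\Delta Z_t))-X_{t_-}$, exactly the jump prescribed by $\overline{\hat{\Psi}}_{K_t,G_t}$. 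For the continuous part I would substitute into $X^i_t-X^i_0=\int a^i_\alpha\,d\tilde{Z}^\alpha_s+\tfrac12\int b^i_{\alpha\beta}\,d[\tilde{Z}^\alpha,\tilde{Z}^\beta]^c_s+\sum_s J^i_s$ (with $a^i_\alpha,b^i_{\alpha\beta}$ the first and second $z'$-derivatives of $\overline{\Psi}_{K_s}$ on the diagonal) the representation of $\tilde{Z}$ as a solution of \refeqn{equation_manifold2} driven by $Z$, together with Lemma \ref{lemma_geometrical1}. The coefficient of $dZ^\gamma$ becomes $a^i_\alpha\,\partial_{z'^\gamma}\overline{\tilde{\Psi}}^\alpha_{G_s}$, matching $\partial_{z'^\gamma}\overline{\hat{\Psi}}^i_{K_s,G_s}$ by the first-order chain rule, while the coefficient of $d[Z^\gamma,Z^\delta]^c$ receives two contributions: $b^i_{\alpha\beta}\,\partial_{z'^\gamma}\overline{\tilde{\Psi}}^\alpha\,\partial_{z'^\delta}\overline{\tilde{\Psi}}^\beta$, coming from $\tfrac12\int b^i_{\alpha\beta}\,d[\tilde{Z}^\alpha,\tilde{Z}^\beta]^c$ via \refeqn{equation_gauge2}, and $a^i_\alpha\,\partial_{z'^\gamma z'^\delta}\overline{\tilde{\Psi}}^\alpha$, coming from the quadratic-variation term carried by $d\tilde{Z}^\alpha$ and fed through $\int a^i_\alpha\,d\tilde{Z}^\alpha$ (by \refeqn{equation_gauge3}); their sum is precisely the Fa\`a di Bruno expansion of $\partial_{z'^\gamma z'^\delta}\overline{\hat{\Psi}}^i_{K_s,G_s}$.

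The step I expect to be the main obstacle is this second-order (bracket) matching: it is where the two a priori unrelated sources of quadratic-variation terms must be recognized as the single second derivative of the composite map, and where one must check that the bracket term hidden inside the representation of $d\tilde{Z}$ is transported correctly through the $\int a^i_\alpha\,d\tilde{Z}^\alpha$ integral. A subsidiary technical point is the justification of Lemma \ref{lemma_geometrical1}: the jump-excess functions arising for $\tilde{Z}$ and for the composite equation are $O(|z'-z|^2)$ near the diagonal because $\overline{\tilde{\Psi}}_g(\tilde{z},z,z)=\tilde{z}$ forces the value and the first derivative of the excess to vanish, which is exactly the hypothesis verified in Remark \ref{remark_geometrical1}. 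Once the three matchings are assembled, the equation for $(X,Z)$ is \refeqn{equation_manifold2} with defining function $\overline{\hat{\Psi}}_{k,g}$, so $(X,Z)$ solves the canonical SDE $\hat{\Psi}_{k,g}(x,z)=\Psi_k(x,\Xi_g(z))$, as claimed.
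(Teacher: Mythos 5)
Your proposal is correct and follows essentially the same route as the paper's proof: reduction to Euclidean charts via embeddings, the composition identity $\overline{\Psi}_k\big(x,\Xi_g(z'\cdot z^{-1})\cdot\tilde{z},\tilde{z}\big)=\overline{\hat{\Psi}}_{k,g}(x,z',z)$ (independent of $\tilde{z}$), and the term-by-term matching of drift, bracket and jump contributions through Lemma \ref{lemma_geometrical1}, Remark \ref{remark_geometrical1} and the chain rule, including the two-source second-order identification you single out as the main obstacle. The paper's proof carries out exactly these computations, writing $\overline{\Xi}_g$ for your $\overline{\tilde{\Psi}}_g$.
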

\begin{proof}
We prove the theorem when $N,\tilde{N},M$ are open subsets of $\mathbb{R}^{m},\mathbb{R}^{n}$.
The proof of the  general case can be obtained by using suitable embeddings. \\
Let $x^i$, $z^{\alpha}$ and $\tilde{z}^{\alpha}$ be some global
coordinate systems of $M,N, \tilde{N}$ respectively. By
definition $\tilde{Z}$ is such that
\begin{eqnarray*}
\tilde{Z}^{\alpha}_t-\tilde{Z}^{\alpha}_0&=&
\int_0^t{\partial_{z'^{\beta}}(\overline{\Xi}^{\alpha}_{G_s})(\tilde{Z}_{s_-},Z_{s_-},Z_{s_-})dZ^{\beta}_s+\frac{1}{2}
\partial_{z'^{\beta}z'^{\gamma}}(\overline{\Xi}^{\alpha}_{G_s})(\tilde{Z}_{s_-},Z_{s_-},Z_{s_-})d[Z^{\beta},Z^{\gamma}]^c_s}\\
&&+\sum_{0\leq s \leq t}\overline{\Xi}_{G_s}^{\alpha}(\tilde{Z}_{s_-},
Z_s,Z_{s_-})-\overline{\Xi}^{\alpha}_{G_s}(\tilde{Z}_{s_-},Z_{s_-},Z_{s_-})-
\partial_{z'^{\beta}}(\overline{\Xi}^{\alpha}_{G_s})(\tilde{Z}_{s_-},Z_{s_-},Z_{s_-})\Delta Z^{\beta}_s,
\end{eqnarray*}
where $\overline{\Xi}_g(\tilde{z},z',z)=\Xi_g(z'\cdot z^{-1})
\cdot \tilde{z}$. By the previous equation, Lemma
\ref{lemma_geometrical1} and Remark \ref{remark_geometrical1} we
obtain
\begin{eqnarray*}
[\tilde{Z}^{\alpha},\tilde{Z}^{\beta}]_t&=&\int_0^t{\partial_{z'^{\gamma}}
(\overline{\Xi}^{\alpha}_{G_s})(\tilde{Z}_{s_-},Z_{s_-},Z_{s_-})\partial_{z'^{\delta}}
(\overline{\Xi}^{\beta}_{G_s})(\tilde{Z}_{s_-},Z_{s_-},Z_{s_-})d[Z^{\gamma},Z^{\delta}]^c_s}\\
\Delta \tilde{Z}^{\alpha}_t&=&\overline{\Xi}^{\alpha}_{G_t}(\tilde{Z}_{t_-},
Z_t,Z_{t_-})-\overline{\Xi}^{\alpha}_{G_t}(\tilde{Z}_{t_-},Z_{t_-},Z_{t_-}).
\end{eqnarray*}
Therefore, since $(X,\tilde{Z})$ is a solution to the canonical SDE $\Psi_{K_t}$, using Lemma
\ref{lemma_geometrical1} and Remark \ref{remark_geometrical1}, we have
\small
\begin{eqnarray*}
X^i_t-X^i_0&=&
\int_0^t{\partial_{\tilde{z}'^{\alpha}}(\overline{\Psi}^i_{K_s})(X_{s_-},\tilde{Z}_{s_-},\tilde{Z}_{s_-})d\tilde{Z}^{\alpha}_s+\frac{1}{2}
\partial_{\tilde{z}'^{\alpha}\tilde{z}'^{\beta}}(\overline{\Psi}^i_{K_s})(X_{s_-},\tilde{Z}_{s_-},\tilde{Z}_{s_-})d[\tilde{Z}^{\alpha},\tilde{Z}^{\beta}]_s}\\
&&+\sum_{0\leq s \leq t}\{\Psi^i_{K_s}(X_{s_-},\Delta \tilde{Z}_s)-\Psi^i_{K_s}(X_{s_-},1_N)-
\partial_{\tilde{z}'^{\alpha}}(\overline{\Psi}^i_{K_s})(X_{s_-},\tilde{Z}_{s_-},\tilde{Z}_{s_-})\Delta \tilde{Z}^{\alpha}_s\}\\
&=&\int_0^t{\partial_{\tilde{z}'^{\alpha}}(\overline{\Psi}^i_{K_s})(X_{s_-},\tilde{Z}_{s_-},\tilde{Z}_{s_-})\partial_{z'^{\beta}}
(\overline{\Xi}^{\alpha}_{G_s})
(\tilde{Z}_{s_-},Z_{s_-},Z_{s_-})dZ^{\beta}_s}\\
&&+\frac{1}{2}\int_0^t{\partial_{\tilde{z}'^{\alpha}}(\overline{\Psi}^i_{K_s})(X_{s_-},\tilde{Z}_{s_-},\tilde{Z}_{s_-})
\partial_{z'^{\beta}z'^{\gamma}}(\overline{\Xi}^{\alpha}_{G_s})(\tilde{Z}_{s_-},Z_{s_-},Z_{s_-})d[Z^{\beta},Z^{\gamma}]^c_s}+\\
&&+\sum_{0\leq s \leq t}\partial_{\tilde{z}'^{\alpha}}(\overline{\Psi}^i_{K_s})(X_{s_-},\tilde{Z}_{s_-},\tilde{Z}_{s_-})
(\overline{\Xi}_{G_s}^{\alpha}(\tilde{Z}_{s_-},Z_s,Z_{s_-})-\overline{\Xi}_{G_s}(\tilde{Z}_{s_-},Z_{s_-},Z_{s_-})+\\
&&-\partial_{z'^{\beta}}(\overline{\Xi}^{\alpha}_{G_s})(\tilde{Z}_{s_-},Z_{s_-},Z_{s_-})\Delta Z^{\beta}_s)+\\
&&+\frac{1}{2}\int_0^t{\left(\partial_{\tilde{z}'^{\alpha}\tilde{z}'^{\delta}}(\overline{\Psi}^i_{K_s})(X_{s_-},\tilde{Z}_{s_-},\tilde{Z}_{s_-})
\partial_{z'^{\beta}}(\overline{\Xi}^{\alpha}_{G_s})(\tilde{Z}_{s_-},Z_{s_-},Z_{s_-})\cdot\right.}\\
&&\left.\cdot\partial_{z'^{\beta}}(\overline{\Xi}^{\delta}_{G_s})(\tilde{Z}_{s_-},Z_{s_-},Z_{s_-})\right)d[Z^{\beta},Z^{\gamma}]_{s}+\sum_{0
\leq s \leq t}\left(
\Psi^i(X_{s_-},\Xi_{G_s}(\Delta Z_s))-\Psi^i(X_{s_-},1_N)+\right.\\
&&\left.-\partial_{\tilde{z}'^{\alpha}}(\overline{\Psi}^i_{K_s})(X_{s_-},\tilde{Z}_{s_-},\tilde{Z}_{s_-})(\overline{\Xi}_{G_s}^{\alpha}
(\tilde{Z}_{s_-}, Z_s,Z_{s_-})-\overline{\Xi}_{G_s}^{\alpha}(\tilde{Z}_{s_-},Z_{s_-},Z_{s_-}))\right).
\end{eqnarray*}
\normalsize
By the chain rule for derivatives and the fact that
$\overline{\Xi}^{\alpha}(\tilde{Z}_{s_-},Z_{s_-},Z_{s_-})=\tilde{Z}^{\alpha}_{s_-}$ we have
\small
$$
\left.\partial_{z'^{\beta}}(\overline{\Psi}^i_{K_s}(x,\overline{\Xi}_{G_s}(\tilde{z},z',z),\tilde{z})) \right|_{\stackrel{
x=X_{s_-},\tilde{z}=\tilde{Z}_{s_-}}{z=z'=Z_{s_-}}}=
\partial_{\tilde{z}'^{\alpha}}(\overline{\Psi}^i_{K_s})(X_{s_-},\tilde{Z}_{s_-},\tilde{Z}_{s_-})\partial_{z'^{\beta}}(\overline{\Xi}^{\alpha}_{G_s})
(\tilde{Z}_{s_-},Z_{s_-},Z_{s_-})
$$
\begin{eqnarray*}
&\left.\partial_{z'^{\beta}z'^{\gamma}}(\overline{\Psi}^i_{K_s}(x,\overline{\Xi}_{G_s}(\tilde{z},z',z),\tilde{z})) \right|_{\stackrel{
x=X_{s_-},\tilde{z}=\tilde{Z}_{s_-}}{z=z'=Z_{s_-}}}=
\partial_{\tilde{z}'^{\alpha}}(\overline{\Psi}^i_{K_s})(X_{s_-},\tilde{Z}_{s_-},\tilde{Z}_{s_-})
\partial_{z'^{\beta}z'^{\gamma}}(\overline{\Xi}^{\alpha}_{G_s})(\tilde{Z}_{s_-},Z_{s_-},Z_{s_-})+&\\
&+\partial_{\tilde{z}'^{\alpha}\tilde{z}'^{\delta}}(\overline{\Psi}^i_{K_s})(X_{s_-},\tilde{Z}_{s_-},\tilde{Z}_{s_-})
\partial_{z'^{\beta}}(\overline{\Xi}^{\alpha}_{G_s})(\tilde{Z}_{s_-},Z_{s_-},Z_{s_-})\partial_{z'^{\gamma}}(\overline{\Xi}^{\delta}_{G_s})(\tilde{Z}_{s_-},Z_{s_-},Z_{s_-})&
\end{eqnarray*}
\normalsize
Using the fact that
\small
$$\overline{\Psi}_k(x,\overline{\Xi}_g(\tilde{z},z',z),\tilde{z})=\Psi_k(x,(\Xi_g(z' \cdot z^{-1})\cdot \tilde{z})\cdot \tilde{z}^{-1})=\hat{\Psi}_{k,g}(x,z' \cdot z^{-1})=\Psi_k(x,\Xi_g(z'\cdot z^{-1}))=
\overline{\hat{\Psi}}_{k,g}(x,z',z)$$ \normalsize  we obtain \footnotesize
\begin{eqnarray*}
X^i_t-X^i_0&=&\int_0^t{\partial_{z'^{\beta}}(\overline{\hat{\Psi}}^i_{K_s,G_s})(X_{s_-},Z_{s_-},Z_{s_-})dZ^{\beta}_s+\frac{1}{2}
\partial_{z'^{\beta}z'^{\gamma}}(\overline{\hat{\Psi}}^i_{K_s,G_s})(X_{s_-},Z_{s_-},Z_{s_-})d[Z^{\beta},Z^{\gamma}]_s}+\\
&&+\sum_{0\leq s \leq t}\overline{\hat{\Psi}}_{K_s,G_s}^i(X_{s_-},Z_{s},Z_{s_-})-\hat{\Psi}_{K_s,G_s}^i(X_{s_-},Z_{s_-},Z_{s_-})-
\partial_{z'^{\beta}}(\overline{\hat{\Psi}}^i_{K_s,G_s})(X_{s_-},Z_{s_-},Z_{s_-})\Delta Z^{\beta}_s,
\end{eqnarray*}
\normalsize and so $dX_t=\hat{\Psi}_{K_t,G_t}(dZ_t)$. ${}\hfill$ \end{proof}

\begin{corollary}\label{corollary_gauge1}
Suppose that $\mathcal{G}$ is a Lie group and  $\Xi$
is a Lie group action. If $(X,Z)$ is a solution to the
canonical SDE $\Psi_{K_t}$, then $(X,\tilde{Z})$ is a solution to
the canonical SDE defined by
$$\hat{\Psi}_{k,g}(x,z)=\Psi_k(x,\Xi_{g^{-1}}(z)).$$
\end{corollary}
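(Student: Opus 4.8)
The plan is to read Corollary \ref{corollary_gauge1} as the exact converse of Theorem \ref{theorem_gauge1}: the theorem starts from a solution whose driving noise is the \emph{transformed} process $\tilde Z$ and produces a solution driven by $Z$, whereas here we start from a solution driven by $Z$ and must produce one driven by $\tilde Z$. The only new hypothesis is that $\Xi$ is a genuine Lie group action, and I would use it precisely to run Theorem \ref{theorem_gauge1} ``backwards''. First I would record the elementary fact that, $\Xi$ being an action, each $\Xi_g\colon N\to N$ is a diffeomorphism with $(\Xi_g)^{-1}=\Xi_{g^{-1}}$ (so in particular $\tilde N=N$). The whole proof then reduces to the assertion that the semimartingale-level transformation $d\tilde Z=\Xi_G(dZ)$ is invertible, with inverse $dZ=\Xi_{G^{-1}}(d\tilde Z)$, after which a single application of Theorem \ref{theorem_gauge1} finishes.

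For that application I would set the theorem's starting solution to be $(X,Z)$ (our hypothesis), its ``$Z$'' to be our target $\tilde Z$, its gauge map to be $\Xi$, and its predictable $\mathcal{G}$-valued process to be $G^{-1}$; the relation required between the two noises is then exactly the inverse transformation $dZ=\Xi_{G^{-1}}(d\tilde Z)$. Theorem \ref{theorem_gauge1} then yields that $(X,\tilde Z)$ solves the canonical SDE with defining function $\Psi_k(x,\Xi_g(z))$ evaluated along $g=G^{-1}_t$, that is $\Psi_{K_t}(x,\Xi_{G_t^{-1}}(z))$; since this coincides with $\hat\Psi_{k,g}(x,z)=\Psi_k(x,\Xi_{g^{-1}}(z))$ evaluated along $g=G_t$, it is precisely the claimed SDE. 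This last step is pure bookkeeping, turning the group element $G^{-1}$ acting through $\Xi_g$ into the group element $G$ acting through $\Xi_{g^{-1}}$.

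The real work, and the step I expect to be the main obstacle, is establishing the invertibility $d\tilde Z=\Xi_G(dZ)\Leftrightarrow dZ=\Xi_{G^{-1}}(d\tilde Z)$. I would prove it either by direct verification or, more cleanly, by a uniqueness argument. Recall that $d\tilde Z=\Xi_G(dZ)$ means $(\tilde Z,Z)$ solves the canonical SDE with defining function $\Xi_g(z)\cdot x$; I would check that $(Z,\tilde Z)$ solves the canonical SDE with defining function $\Xi_{g^{-1}}(z)\cdot x$ driven by $\tilde Z$, and then invoke the uniqueness part of Theorem \ref{theorem_manifold1}. At the jump times the verification is immediate: the group jumps satisfy $\Delta\tilde Z_t=\Xi_{G_t}(\Delta Z_t)$, so applying $\Xi_{G_t^{-1}}$ and using $\Xi_{G_t^{-1}}\circ\Xi_{G_t}=\mathrm{id}_N$ returns $\Delta Z_t=\Xi_{G_t^{-1}}(\Delta\tilde Z_t)$. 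For the continuous part one differentiates the identity $\Xi_{g^{-1}}\circ\Xi_g=\mathrm{id}_N$ once and twice: the first-order relation shows the two Jacobians are mutually inverse, which makes the quadratic-variation relations \refeqn{equation_gauge2} for the two transformations consistent, and the second-order relation matches the It\^o drift corrections. Lemma \ref{lemma_geometrical1} and Remark \ref{remark_geometrical1} justify, exactly as in the proof of Theorem \ref{theorem_gauge1}, that these jump and bracket identities indeed characterise the transformed semimartingale. I would finally note that the possible mismatch of initial values ($\tilde Z_0=1_N$ versus $Z_0$) is irrelevant, since both the gauge transformation and the notion of solution of a canonical SDE depend only on the increments of the driving noise.
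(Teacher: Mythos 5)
Your proposal is correct and shares the paper's overall skeleton (first invert the noise transformation, then apply Theorem \ref{theorem_gauge1} with the roles of $Z$ and $\tilde Z$ exchanged and $G^{-1}$ in place of $G$), but you handle the key inversion step by a genuinely different, more computational route. The paper obtains $dZ_t=\Xi_{G_t^{-1}}(d\tilde Z_t)$ essentially for free by applying Theorem \ref{theorem_gauge1} \emph{to the gauge transformation itself}: defining $d\hat Z_t=\Xi_{G_t^{-1}}(d\tilde Z_t)$, the theorem says $(\hat Z,Z)$ solves the canonical SDE with defining map $\Xi_{g^{-1}}(\Xi_g(z))\cdot x$, and the group-action hypothesis collapses this to $\Xi_{1_\mathcal{G}}(z)\cdot x=z\cdot x$, i.e.\ $d\hat Z_t=dZ_t$; this is a two-line argument that also makes transparent exactly where the new hypothesis (that $\Xi$ is an action, absent from Theorem \ref{theorem_gauge1}) enters. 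You instead re-derive the inversion at the It\^o level, checking the jump identity $\Delta Z_t=\Xi_{G_t^{-1}}(\Delta\tilde Z_t)$ and matching brackets and drift corrections by differentiating $\Xi_{g^{-1}}\circ\Xi_g=\mathrm{id}_N$ once and twice, then invoking uniqueness from Theorem \ref{theorem_manifold1}. This works, but it duplicates precisely the chain-rule computations already encapsulated in the proof of Theorem \ref{theorem_gauge1}, so it buys self-containedness at the cost of redoing the hardest part of that proof in a special case. On the credit side, you explicitly address the mismatch of initial values ($\tilde Z_0=1_N$ versus $Z_0$) and why only increments matter, a point the paper's proof leaves implicit; both proofs also implicitly use that $G_t^{-1}$ is again a locally bounded predictable process, which is harmless since inversion is continuous on $\mathcal{G}$.
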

\begin{proof}
The proof is an application of Theorem \ref{theorem_gauge1} and of the fact that $dZ_t=\Xi_{G_t^{-1}}(d\tilde{Z}_t)$. Indeed, defining
$d\hat{Z}_t=\Xi_{G_t^{-1}}(d\tilde{Z}_t)$, by  Theorem \ref{theorem_gauge1} we have that $d\hat{Z}_t=\Xi_{G_t^{-1}} \circ \Xi_{G_t}
(dZ_t)=\Xi_{1_{\mathcal{G}}}(dZ_t)=dZ_t$. The corollary follows directly from Theorem \ref{theorem_gauge1}. ${}\hfill$ \end{proof}

\section{Gauge symmetries of semimartingales on Lie groups}\label{section_gauge}

\subsection{Definition of gauge symmetries}

Let us consider the following well known
property of  Brownian motion. Consider a Brownian
motion $Z$ on $\mathbb{R}^n$ and let $B_t: \Omega \times [0,T]
\rightarrow O(n)$ be a predictable process, with respect to the natural filtration of $Z$, 
taking values in the Lie group $ O(n)$ of orthogonal matrices. Then the process defined by
\begin{equation}\label{equation_gauge1}
Z'^{\alpha}_t=\int_0^t{B^{\alpha}_{\beta,s}dZ^{\beta}_s}
\end{equation}
is a new $n$ dimensional Brownian motion.\\
We propose a generalization of this property to the case in which $Z$ is a c\acc{a}dl\acc{a}g semimartingale in a Lie group $N$ (see
\cite{Privault2012} for a similar result about Poisson measures). In the simple case $N=\mathbb{R}^n$, by replacing the Brownian motion with a
general semimartingale, the invariance property \refeqn{equation_gauge1}  is no longer true. So we need
\begin{itemize}
\item a method to generalize the integral relation to the case where $Z$ is no more a process on $\mathbb{R}^n$ and the Lie group valued
process is no more the $O(n)$-valued process $B$, \item a class of
semimartingales on a Lie group $N$ such that the generalization of the integral relation \refeqn{equation_gauge1} holds.
\end{itemize}

\begin{definition}\label{definition_gauge}
Let $Z$ be a semimartingale on a Lie group $N$ with respect to the filtration $\mathcal{F}_t$. Given a Lie group $\mathcal{G}$  and an element $ g\in \mathcal{G}$, we say that $Z$ admits $\mathcal{G}$, with action $\Xi_g$ and with respect to the filtration $\mathcal{F}_t$,  as \emph{gauge symmetry group} if, for any $\mathcal{F}_t$-predictable locally bounded process $G_t$ taking values in $\mathcal{G}$, the semimartingale $\tilde{Z}$ solution to the equation $d\tilde{Z}_t=\Xi_{G_t}(dZ_t)$ has the same law as $Z$.
\end{definition}

In the following we consider that the filtration $\mathcal{F}_t$ of the  probability space $(\Omega,\mathcal{F},\mathbb{P})$  is given and we omit to mention it if it is not strictly necessary.\\
Since  $\tilde{Z}$ in  Definition \ref{definition_gauge} solves the canonical equation
$d\tilde{Z}_t=\Xi_{G_t}(dZ_t)$, for all times, we are interested in characterizing  SDEs of the previous form with explosion time equal to $+ \infty$ for any $G_t$.
The following proposition gives us  a sufficient condition on the group $N$ such that, for any action $\Xi_g$, the corresponding canonical SDE has indeed
explosion time $+ \infty$.

\begin{proposition}\label{proposition_faithful}
Suppose that $N$ admits a faithful representation. Then, for any
locally bounded process $G_t$ in $\mathcal{G}$, the explosion time
of the SDE $d\tilde{Z}_t=\Xi_{G_t}(dZ_t)$ is $+
\infty$.
\end{proposition}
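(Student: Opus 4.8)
The plan is to use the faithful representation to \emph{linearise} the canonical equation $d\tilde{Z}_t=\Xi_{G_t}(dZ_t)$, thereby reducing the non-explosion assertion to the well known global solvability of linear SDEs driven by semimartingales. Let $R:N\to GL(k)\subset\mathbb{R}^{k\times k}$ be a faithful representation; it is an injective homomorphism, and I would use it (composed with the standard closed embedding $GL(k)\hookrightarrow\mathbb{R}^{k^2+1}$, $g\mapsto(g,(\det g)^{-1})$) as the embedding $i_1$ of $N$ in the sense of Definition \ref{definition_solution}. Recall that $\tilde{Z}$ solves the canonical SDE associated with $\tilde{\Psi}_g(x,z)=\Xi_g(z)\cdot x$, that is, the geometrical SDE with defining map $\overline{\tilde{\Psi}}_g(x,z',z)=\Xi_g(z'\cdot z^{-1})\cdot x$. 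Writing $M_t=R(\tilde{Z}_t)$ and using that $R$ is a homomorphism,
$$R\big(\overline{\tilde{\Psi}}_g(x,z',z)\big)=R\big(\Xi_g(z'\cdot z^{-1})\big)\,R(x)=U_g(z',z)\,R(x),$$
where $U_g(z',z):=R(\Xi_g(z'\cdot z^{-1}))\in GL(k)$ and $U_g(z,z)=R(\Xi_g(1_N))=R(1_N)=\mathrm{Id}$. Thus, in the coordinates supplied by $R$, the defining map $(M,z',z)\mapsto U_g(z',z)\,M$ is linear in $M$.

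First I would substitute this map into \refeqn{equation_manifold2}. Because the map is linear in $M$, each derivative $\partial_{z'^{\beta}}$ and $\partial_{z'^{\beta}z'^{\gamma}}$ of $U_{G_s}(z',z)M$ factors as a matrix depending only on $Z$ and $G$ acting on $M_{s_-}$, while the jump term becomes $\big(U_{G_s}(Z_s,Z_{s_-})-\mathrm{Id}\big)M_{s_-}$ minus its first-order part. Local boundedness of $G$ makes Lemma \ref{lemma_geometrical1} and Remark \ref{remark_geometrical1} applicable and identifies $M$ as the solution of a linear equation
$$dM_t=d\mathcal{A}_t\,M_{t_-},\qquad M_0=\mathrm{Id},$$
for a suitable $\mathbb{R}^{k\times k}$-valued semimartingale $\mathcal{A}$ built from $Z$ and $G$, whose jump factors $\mathrm{Id}+\Delta\mathcal{A}_t=U_{G_t}(Z_t,Z_{t_-})=R(\Xi_{G_t}(\Delta Z_t))$ are invertible.

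The core step is the global solvability of this equation. The integrand is globally Lipschitz in $M$, so by the existence and uniqueness theorem for SDEs with Lipschitz coefficients, equivalently by the theory of the Dol\'eans--Dade exponential $M=\mathcal{E}(\mathcal{A})$ (see \cite[Chapter V]{Protter1990}), the solution is defined on all of $\mathbb{R}_+$ with no explosion. Moreover, since every $\mathrm{Id}+\Delta\mathcal{A}_t$ is invertible, $M_t\in GL(k)$ for all $t$, and $M^{-1}$ satisfies an analogous linear equation; hence both $\|M_t\|$ and $\|M_t^{-1}\|$ stay bounded on each compact time interval.

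The hard part will be transferring this matrix non-explosion back to $N$. A faithful representation is a priori only an injective immersion, so one must rule out that $\tilde{Z}_t$ leaves every compact set of $N$ while $M_t=R(\tilde{Z}_t)$ remains bounded. This is exactly where faithfulness is used: together with the realisation of $N$ as a closed matrix subgroup of $GL(k)$, the simultaneous control of $\|M_t\|$ and $\|M_t^{-1}\|$ confines $i_1(\tilde{Z}_t)=(M_t,(\det M_t)^{-1})$ to a compact subset of $i_1(N)$ on every $[0,T]$, since $(\det M_t)^{-1}=\det(M_t^{-1})$ is a polynomial in the entries of the bounded matrix $M_t^{-1}$. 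Applying $R^{-1}$ then shows that $\tilde{Z}_t$ stays in a compact subset of $N$, which is precisely the statement that the explosion time is $+\infty$. I would make this final implication rigorous by choosing $R$ so that $N$ is realised as a closed subgroup of $GL(k)$ (hence $R$ a proper embedding), the remaining content being the routine linearisation above together with the standard non-explosion of linear SDEs.
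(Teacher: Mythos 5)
Your core argument is the same as the paper's: compose with the faithful representation, observe that in the resulting matrix coordinates the defining map $\overline{\Xi}_g(\tilde z,z',z)=K(\Xi_g(z'\cdot z^{-1}))\cdot K(\tilde z)$ is linear in $\tilde z$, and invoke global solvability of linear SDEs with jumps (the paper cites \cite[Chapter 5]{Bichteler2002} where you use the Dol\'eans--Dade exponential of \cite[Chapter V]{Protter1990}; these are interchangeable). Your supplementary observations are also sound: the jump factors $R(\Xi_{G_t}(\Delta Z_t))$ are invertible, so $M_t^{-1}$ solves an analogous linear equation, and both $\|M_t\|$ and $\|M_t^{-1}\|$ are bounded on compact time intervals, confining the matrix path to a compact subset of $GL(k)$.

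The step you should not take for granted is the last one, and it is precisely where your proposal goes beyond what the hypothesis gives you. You correctly note that a faithful representation is a priori only an injective immersion, so non-explosion of $M_t=R(\tilde Z_t)$ in the ambient matrix space does not by itself prevent $\tilde Z_t$ from leaving every compact subset of $N$ in finite time: the matrix path could accumulate at a point of the closure of $R(N)$ outside $R(N)$, exactly as the dense one-parameter subgroup of a torus shows that faithfulness does not imply properness. Your remedy --- ``choose $R$ so that $N$ is realised as a closed subgroup of $GL(k)$'' --- assumes strictly more than the stated hypothesis: the proposition grants only that \emph{some} faithful representation exists, and the claim that one can then manufacture a faithful representation with \emph{closed} image is a nontrivial structure-theoretic assertion that you neither prove nor cite; since closedness is exactly the properness you need, the argument as written is circular at this point. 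In fairness, the paper's own proof is silent on the same issue (it treats $K$ as an embedding and passes from non-explosion in $\matr(l_N,l_N)$ back to $N$ without comment), so your proposal reproduces the paper's reasoning where the paper is explicit and makes visible, without fully closing, the gap the paper leaves implicit. To actually close it one would need either a proof that a closed faithful representation exists whenever a faithful one does, or an argument avoiding representations altogether, e.g. exploiting the right-invariance $\tilde\Psi_g(x\cdot h,z)=\tilde\Psi_g(x,z)\cdot h$ of the canonical equation together with the flow property to rule out finite explosion.
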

\begin{proof}
Let $K:N \rightarrow \matr(l_N,l_N)$ be a faithful representation
of $N$. In this representation, the geometrical SDEs associated
with $\Xi_g$, is defined by the map $\overline{\Xi}_g$ given by
$$\overline{\Xi}_g(\tilde{z},z',z)=K(\Xi_g(z' \cdot z^{-1})) \cdot K(\tilde{z}),$$
where $\cdot$ on the right-hand side denotes the usual matrix multiplication. If $k^i$ is the standard cartesian coordinate system in
$\matr(l_N,l_N)$, extending suitably $\Xi_g$ to all $\matr(l_N,l_N)$, we have that $\overline{\Xi}^i(\tilde{k},k',k)$,
$\partial_{k'^j}(\overline{\Xi}^i_g)(\tilde{k},k',k)$ and $\partial_{k'^jk'^l}(\overline{\Xi}^i_g)(\tilde{k},k',k)$ are linear in $\tilde{k}$.
So, putting $Z^i=k^i(Z)$ and $\tilde{Z}^i=k^i(\tilde{Z})$, the SDE
\begin{eqnarray*}
\tilde{Z}^i_t&=&K^i_0+\int_0^t{\partial_{k'^j}(\overline{\Xi}^i_{G_s})(\tilde{Z}_{s_-},Z_{s_-},Z_{s_-}) dZ^j_{s}}+\\
&&+\frac{1}{2}\int_0^t{\partial_{k'^jk'^l}(\overline{\Xi}^i_{G_s})(\tilde{Z}_{s_-},Z_{s_-},Z_{s_-})d[Z^j,Z^l]_s}+\\
&&+\sum_{0 \leq s \leq t}(\overline{\Xi}^i_{G_s})(\tilde{Z}_{s_-},Z_{s},Z_{s_-})-\tilde{Z}^i_s-\Delta Z_s^j
\partial_{k'^j}(\overline{\Xi}^{i}_{G_s})(\tilde{Z}_{s_-},Z_{s_-},Z_{s_-})),
\end{eqnarray*}
is linear in $\tilde{Z}$ and so, by well known results on SDEs with jumps in $\mathbb{R}^{l_N^2}$ (see, e.g., \cite[Chapter 5]{Bichteler2002}) the
solution has explosion time $\tau=+ \infty$ almost surely. ${}\hfill$ \end{proof}\\

In order to provide  a method to construct semimartingales  admitting gauge symmetry groups, we start by showing how it is possible to obtain, starting from martingales with gauge symmetries, new semimartingales with different gauge symmetries.

\begin{proposition}\label{proposition_gauge2}
Given two Lie groups $N$ and $N'$, let $Z$ be a semimartingale on $N$ with gauge symmetry group $\mathcal{G}$ and action $\Xi_g$. If $\Theta:N
\rightarrow N'$ is a diffeomorphism from $N$ onto $N'$ such that $\Theta(1_N)=1_{N'}$, then $d\tilde{Z}_t=\Theta(dZ_t)$ has gauge symmetry group
$\mathcal{G}$ with action $\Theta \circ \Xi_g \circ \Theta^{-1}$.
\end{proposition}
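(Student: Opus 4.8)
The plan is to reduce the gauge symmetry of $\tilde{Z}$ to the gauge symmetry of $Z$ by means of the composition rule for canonical transformations encoded in Theorem \ref{theorem_gauge1}, and then to transfer equality of laws through the (deterministic) solution map. Set $\Xi'_g := \Theta \circ \Xi_g \circ \Theta^{-1}$. Since $\Theta$ is a diffeomorphism with $\Theta(1_N)=1_{N'}$ and $\Xi_g(1_N)=1_N$, the map $\Xi'_g$ is smooth and satisfies $\Xi'_g(1_{N'})=1_{N'}$, so it is an admissible action and the transformation $d\hat{Z}_t=\Xi'_{G_t}(d\tilde{Z}_t)$ is well defined. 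Fixing an arbitrary $\mathcal{F}_t$-predictable locally bounded process $G_t$ valued in $\mathcal{G}$, by Definition \ref{definition_gauge} I must show that the solution $\hat{Z}$ of $d\hat{Z}_t=\Xi'_{G_t}(d\tilde{Z}_t)$ has the same law as $\tilde{Z}$.

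First I would record the composition (chain) rule for these transformations: if $d\tilde{Z}_t=\Xi_{G_t}(dZ_t)$ and $dX_t=\Upsilon_{H_t}(d\tilde{Z}_t)$ for a second identity-preserving action $\Upsilon$, then $dX_t=(\Upsilon_{H_t}\circ\Xi_{G_t})(dZ_t)$. This is exactly Theorem \ref{theorem_gauge1} applied with $\Psi_g(x,z)=\Upsilon_g(z)\cdot x$: the theorem turns $(X,\tilde{Z})$ solving $\Psi_{H_t}$ into $(X,Z)$ solving $\hat{\Psi}(x,z)=\Psi_{H_t}(x,\Xi_{G_t}(z))=(\Upsilon_{H_t}\circ\Xi_{G_t})(z)\cdot x$, which is the canonical SDE for $dX_t=(\Upsilon_{H_t}\circ\Xi_{G_t})(dZ_t)$. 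The transformation $d\tilde{Z}_t=\Theta(dZ_t)$ is itself of this type, namely a $\Xi$-transformation with trivial group and constant action $\Theta$, so the rule applies with $\Theta$ in place of one of the actions.

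Applying this rule, together with the algebraic identity $\Xi'_g\circ\Theta=\Theta\circ\Xi_g$ (immediate from the definition of $\Xi'$), I obtain
\begin{equation*}
d\hat{Z}_t=\Xi'_{G_t}(d\tilde{Z}_t)=(\Xi'_{G_t}\circ\Theta)(dZ_t)=(\Theta\circ\Xi_{G_t})(dZ_t).
\end{equation*}
Reading the composition rule once more, now in the splitting direction, this says precisely that $\hat{Z}$ is obtained by first forming $Z'$ with $dZ'_t=\Xi_{G_t}(dZ_t)$ and then setting $d\hat{Z}_t=\Theta(dZ'_t)$; simultaneously $\tilde{Z}$ arises from $Z$ by the \emph{same} map $\Theta$, via $d\tilde{Z}_t=\Theta(dZ_t)$.

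The conclusion then rests on two facts. Since $Z$ admits $\mathcal{G}$ with action $\Xi_g$ and $G_t$ is predictable and locally bounded, Definition \ref{definition_gauge} gives $\mathrm{Law}(Z')=\mathrm{Law}(Z)$. Finally, the solution of the canonical SDE $d\hat{Z}_t=\Theta(dZ'_t)$ is, by the strong existence and uniqueness of Theorem \ref{theorem_manifold1}, a fixed adapted measurable functional of the driving path, carrying no auxiliary randomness; hence $\mathrm{Law}(\hat{Z})$ and $\mathrm{Law}(\tilde{Z})$ are the images of $\mathrm{Law}(Z')$ and $\mathrm{Law}(Z)$ under one and the same map, and therefore coincide. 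This yields the assertion. I expect the main obstacle to lie in this last step: making precise that the pathwise strong solution defines a single measurable functional of the driving semimartingale, so that equality of the driver laws transfers to equality of the solution laws, while keeping track of the filtrations and of the absence of extra randomness in the map $\Theta$.
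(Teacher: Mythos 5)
Your proposal is correct and follows essentially the same route as the paper's proof: both rest on the composition rule of Theorem \ref{theorem_gauge1}, the gauge symmetry of $Z$ applied to the predictable process $G_t$, and the uniqueness of the strong solution to the geometrical SDE to transfer equality of laws through the deterministic map $\Theta$. The only cosmetic difference is that you compose forward via $\Xi'_{G_t}\circ\Theta=\Theta\circ\Xi_{G_t}$, whereas the paper first inverts the transformation, writing $dZ_t=\Theta^{-1}(d\tilde{Z}_t)$ via Corollary \ref{corollary_gauge1}, before applying $\Xi_{G_t}$ and $\Theta$.
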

\begin{proof}
By Corollary \ref{corollary_gauge1}  $dZ_t=\Theta^{-1}(\tilde{dZ}_t)$, and since $Z$ has gauge symmetry group $\mathcal{G}$ with action $\Xi_g$,
by Theorem \ref{theorem_gauge1}, $\Xi_{G_t}(dZ_t)=\Xi_{G_t} \circ \Theta^{-1}(d\tilde{Z}_t)$ has the same distribution as $Z$ for any locally
bounded predictable process $G_t$. Moreover, by the uniqueness of the strong solution to a geometrical SDE, we have that $\Theta(\Xi_{G_t} \circ
\Theta^{-1}(d\tilde{Z}_t))= \Theta \circ \Xi_{G_t} \circ \Theta^{-1}(d\tilde{Z}_t)$ has the same distribution as $\tilde{Z}$. \hfill\end{proof}\\

In the following, in order to provide some explicit methods  to verify that a semimartingale on a Lie group $N$ has the gauge symmetry group
$\mathcal{G}$ with action $\Xi_g$,  we introduce the concept of characteristics of a semimartingale on a Lie
group. This allows us to formulate  a condition, equivalent to Definition \ref{definition_gauge}, that can be
directly applied to L\'evy processes on Lie groups providing a completely deterministic method to verify Definition \ref{definition_gauge} in this
case. Then we shall use this reformulation  to provide some examples of non-Markovian processes admitting  gauge
symmetry groups.\\

\subsection{Characteristics of a Lie group valued semimartingale}

In this section we  extend the well known concept of semimartingale characteristics  from the $\mathbb{R}^n$ setting
to the case of a semimartingale defined on a general finite dimensional Lie group $N$. \\
Given $n$ generators  $Y_1,...,Y_n$  of right-invariant vector fields on $N$ providing a global trivialization of the
tangent bundle $TN$, the corresponding Hunt functions $h^1,...,h^n$  are measurable, bounded positive functions, smooth in a neighbourhood of
the identity $1_N$, with compact support and such that $Y_{\alpha}(h^{\beta})(1_N)=\delta^{\beta}_{\alpha}$ (the existence of these functions is proved, for example, in
\cite{Hunt1956}). Generalizing \cite{Jacod2003} we give the following
\begin{definition}\label{definition_characteristic}
Let $b$ be a predictable semimartingale of bounded variation on $\mathbb{R}^n$, and let $A$ be a predictable continuous semimartingale taking values
in the set of semidefinite positive $n \times n$ matrices. Furthermore let  $\nu$ be a predictable random measure defined on $\mathbb{R}_+ \times N$.  If
 $Z$ is a semimartingale on a Lie group $N$, we say that $Z$ has characteristics $(b,A,\nu)$ with respect to $Y_1,...,Y_n$ and $h^1,...,h^n$ if,
for any smooth bounded functions $f,g \in \cinf(N)$ and for any smooth and bounded function $p$ which is identically $0$ in a neighbourhood of
$1_N$, we have that
\begin{eqnarray}
&\sum_{0 \leq s \leq t}p(\Delta Z_s)-\int_{0}^t{\int_N{p(z')\nu(ds,dz')}},&\label{equation_characteristic2}\\
&[f(Z),g(Z)]^c_t-g(Z_0)f(Z_0)-\int_0^t{Y_{\alpha}(f)(Z_{s_-})Y_{\beta}(g)(Z_{s_-})dA^{\alpha\beta}_s}&\label{equation_charcteristic3}\\
&\begin{array}{c}
f(Z_t)-f(Z_0)-\int_0^t{Y_{\alpha}(f)(Z_{s_-})db^{\beta}_s}-\frac{1}{2}\int_0^t{Y_{\alpha}(Y_{\beta}(f))(Z_{s_-})dA^{\alpha\beta}_s}+\\
-\sum_{0 \leq s \leq t}(f(Z_s)-f(Z_{s_-})-h^{\alpha}(\Delta
Z_s)Y_{\alpha}(f)(Z_{s_-}))
\end{array}&\label{equation_characteristic4}
\end{eqnarray}
are local martingales.
\end{definition}

\begin{remark}
We note that condition
\refeqn{equation_charcteristic3} is redundant, because it can be
deduced from  \refeqn{equation_characteristic2} and
\refeqn{equation_characteristic4}.
\end{remark}

The following theorem states that any semimartingale $Z$ defined on a Lie group $N$ admits  essentially a unique  characteristic triplet $(b,A,\nu)$.

\begin{theorem}\label{theorem_characteristic1}
If $Z$ is a semimartingale on a Lie group $N$, then $Z$ admits a
characteristic triplet $(b,A,\nu)$ with respect to $Y_1,...,Y_n$ and
$h^1,...h^n$, which is  unique up to
$\mathbb{P}$ null sets.
\end{theorem}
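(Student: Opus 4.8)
The plan is to establish existence and uniqueness of the characteristic triplet $(b,A,\nu)$ by reducing everything to the classical $\mathbb{R}^n$ theory of \cite{Jacod2003} via a faithful representation or an embedding of $N$, and then transporting the resulting objects back to $N$ through the right-invariant frame $Y_1,\dots,Y_n$. First I would fix the generators $Y_1,\dots,Y_n$ and the Hunt functions $h^1,\dots,h^n$, and embed $N$ (or its faithful matrix representation) into some $\mathbb{R}^{k_N}$ so that $Z$ becomes a genuine $\mathbb{R}^{k_N}$-valued semimartingale $\hat Z=i_2(Z)$. The classical theory then furnishes an $\mathbb{R}^{k_N}$ characteristic triplet $(\hat b,\hat A,\hat\nu)$ relative to a standard truncation function. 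The key point is that the three martingale conditions \refeqn{equation_characteristic2}, \refeqn{equation_charcteristic3}, \refeqn{equation_characteristic4} are exactly the intrinsic (frame-dependent) reformulations of the ordinary semimartingale characteristics, so the whole statement amounts to checking that the Euclidean data determine, and are determined by, the triplet $(b,A,\nu)$ on $N$.

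For existence, I would proceed in three steps, treating the three pieces of data separately. The jump measure $\nu$ is the predictable compensator of the jump random measure $\mu^Z(ds,dz)=\sum_{s}\delta_{(s,\Delta Z_s)}$ on $\mathbb{R}_+\times N$; its existence and uniqueness as a predictable random measure is the standard compensation theorem applied to the integer-valued measure counting the jumps $\Delta Z_s=Z_s\cdot Z_{s_-}^{-1}$, and condition \refeqn{equation_characteristic2} is precisely the statement that $\nu$ compensates $\mu^Z$ against bounded functions $p$ vanishing near $1_N$. For $A$, I would apply \refeqn{equation_charcteristic3} to a suitable family of test functions: since $Y_1(f),\dots,Y_n(f)$ span the cotangent space pointwise, evaluating the continuous bracket $[f(Z),g(Z)]^c$ against $f,g$ chosen so that $Y_\alpha(f)(Z_{s_-})$ realizes any prescribed covector identifies $dA^{\alpha\beta}_s$ uniquely as the density of $[f(Z),g(Z)]^c$ in the frame; the symmetry and positive-semidefiniteness of $A$ follow from those of the continuous bracket. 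Finally $b$ is extracted from \refeqn{equation_characteristic4}: once $A$ and $\nu$ are fixed, the requirement that the displayed process be a local martingale forces the predictable bounded-variation part $\int_0^t Y_\alpha(f)(Z_{s_-})\,db^\alpha_s$ to coincide with the drift of $f(Z)$ after subtracting the second-order and compensated-jump terms, which by the invertibility of the frame (using $Y_\alpha(h^\beta)(1_N)=\delta^\beta_\alpha$) pins down $b^\alpha$ uniquely via $f=h^\alpha$.

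The uniqueness claim ``up to $\mathbb{P}$-null sets'' then follows from the uniqueness in each of the three extractions: two predictable compensators of the same jump measure agree off a null set, two predictable increasing processes with the same continuous-bracket densities agree, and two predictable finite-variation processes appearing in the canonical decomposition of the special semimartingales $h^\alpha(Z)$ agree. The substantive verification is that these frame-dependent objects are genuinely well defined, i.e. independent of the chosen embedding or faithful representation and consistent across coordinate patches; here I would invoke the same embedding-independence argument already used in Theorem \ref{theorem_manifold1} and Theorem \ref{theorem_geometrical1}, together with the fact that the right-invariant vector fields $Y_\alpha$ and the Hunt functions $h^\alpha$ are intrinsic data on $N$.

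The main obstacle I anticipate is not the algebra but the passage between the intrinsic conditions on $N$ and the Euclidean characteristics: one must be careful that the truncation built from the Hunt functions $h^\alpha$ near $1_N$ matches the Euclidean truncation function used in \cite{Jacod2003}, since changing the truncation shifts the drift $b$ by a compensated-jump correction of the form $\int(h^\alpha(z')-\text{(Euclidean truncation)})\,\nu(ds,dz')$. Keeping track of this shift, and verifying that the resulting $b$ is indeed predictable and of locally bounded variation (which relies on $h^\alpha$ being bounded, smooth near $1_N$, and compactly supported, so that the correction term is well defined), is the delicate part of the existence proof.
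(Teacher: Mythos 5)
Your overall strategy---compensating the group jump measure to get $\nu$, embedding $N$ into Euclidean space and inverting the frame to get $b$ and $A$, and deducing uniqueness from uniqueness of the various canonical decompositions---is the same as the paper's, but the specific mechanism you propose for extracting and uniquely determining $b$ has a genuine flaw. You invoke the duality $Y_{\alpha}(h^{\beta})(1_N)=\delta^{\beta}_{\alpha}$ to ``pin down $b^{\alpha}$ via $f=h^{\alpha}$'', but this identity holds \emph{only at the identity} $1_N$, whereas the martingale condition \refeqn{equation_characteristic4} involves the coefficients $Y_{\alpha}(f)(Z_{s_-})$ evaluated along the whole path of the process. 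Away from $1_N$ the matrix $(Y_{\alpha}(h^{\beta})(z))$ need not be invertible (the Hunt functions are only required to be bounded, compactly supported and smooth near $1_N$), so testing \refeqn{equation_characteristic4} against $f=h^{\alpha}$ does not determine $db^{\alpha}_s$; the same objection undermines your uniqueness argument, which appeals to the canonical decomposition of the special semimartingales $h^{\alpha}(Z)$. The paper avoids this by testing against the coordinate functions $k^i$ of a Riemannian embedding $K:N\rightarrow\mathbb{R}^{k_N}$ with respect to a left-invariant metric: the matrix $P=(Y_{\alpha}(k^i))$ has full rank at \emph{every} point (since $K$ is an immersion and the $Y_{\alpha}$ are pointwise linearly independent), hence admits a pseudoinverse $\tilde{P}$; moreover the Riemannian choice makes the entries of $P$ bounded, so the jump-adjusted process $Z^i_t=k^i(Z_t)-\sum_{0\leq s\leq t}(k^i(Z_s)-k^i(Z_{s_-})-h^{\alpha}(\Delta Z_s)Y_{\alpha}(k^i)(Z_{s_-}))$ has bounded jumps, is a special semimartingale, and its predictable finite-variation part $B^i$ is what actually furnishes $b$. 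Uniqueness is then obtained not from frame duality at $1_N$ but from the fact that a predictable local martingale of bounded variation is constant, combined with the arbitrariness of $f,g$ and a partition of unity on $N$.

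A second, related gap: your description of $b$ omits the correction term coming from the non-constancy of the frame. Since $\partial_{k^ik^j}(\tilde{f})=Y_{\beta}(Y_{\alpha}(f))\tilde{P}^{\alpha}_j\tilde{P}^{\beta}_i+Y_{\beta}(\tilde{P}^{\alpha}_j)\tilde{P}^{\beta}_iY_{\alpha}(f)$, It\^o's formula produces, besides the second-order term matched by $A$, a \emph{first-order} term in $f$ proportional to $d[M^{i,c},M^{j,c}]$ that must be absorbed into the drift; this is why the paper defines $b^{\alpha}_t=\int_0^t \tilde{P}^{\alpha}_i(Z_{s_-})dB^i_s+Y_{\beta}(\tilde{P}^{\alpha}_i)(Z_{s_-})\tilde{P}^{\beta}_j(Z_{s_-})d[M^{i,c},M^{j,c}]_s$ rather than simply $\int \tilde{P}^{\alpha}_i dB^i_s$. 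Likewise, for $A$ your argument is really only an identification (uniqueness) argument: existence requires exhibiting one predictable process $A^{\alpha\beta}$ verifying \refeqn{equation_charcteristic3} simultaneously for all $f,g$, which the paper does by the explicit formula $A^{\alpha\beta}_t=\int_0^t\tilde{P}^{\alpha}_i(Z_{s_-})\tilde{P}^{\beta}_j(Z_{s_-})d[M^{i,c},M^{j,c}]_s$, not by prescribing covectors pointwise. Your flagged concern about matching the Hunt-function truncation with the Euclidean one is legitimate but secondary; the essential missing ingredients are the everywhere-invertible, bounded frame matrix supplied by the Riemannian embedding, the resulting speciality of the adjusted coordinate process, and the frame-derivative correction in $b$.
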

\begin{proof}
We first prove the existence.
Given a  semimartingale $Z$ on $N$, we can associate with $Z$ a unique random
measure on $N$ given by
$$\mu^Z(\omega,dt,dz)=\sum_{s \geq 0}I_{\Delta Z_s \not =1_N}\delta_{(s,\Delta Z_s(\omega))}(dt,dz),$$
where $\delta_a$ is the Dirac delta with mass in $a \in \mathbb{R}_+ \times N$. The random measure
$\mu^Z$ is an integer-valued random measure (see, e.g., \cite[Chapter II, Proposition 1.16]{Jacod2003}),
hence there exists a unique non-negative predictable random measure $\mu^{Z,p}$, which is the
compensator of $\mu^Z$ (see, e.g., \cite[Chapter II, Theorem 1.8]{Jacod2003}).\\
We prove that $\nu=\mu^{Z,p}$. Indeed, by definition of $\mu^Z$, we have  $\sum_{0 \leq s \leq t}h(\Delta
Z_s)=\int_0^t{\int_N{h(z')\mu^Z(ds,dz')}}$ and, by definition of compensator, we have that
$$\int_0^t{\int_N{h(z')\mu^Z(ds,dz')}}-\int_0^t{\int_N{h(z')\mu^{Z,p}(ds,dz')}}$$
is a local martingale.\\
In order to prove the existence of processes
$b^{\alpha},A^{\alpha\beta}$ we  introduce a Riemannian
embedding $K:N \rightarrow \mathbb{R}^{k_N}$ with respect to a
left invariant metric on $N$. Put
$$\tilde{Z}^{i}=k^i(Z),$$
where $K=(k^1,...k^{k_N})$, and write
$$Z^i_t=\tilde{Z}^i_t-\sum_{0 \leq s \leq t}\left(\Delta \tilde{Z}^i_s- h^{\alpha}(\Delta Z_s)
Y_{\alpha}(k^i)(Z_{s_-})\right).$$
Since $K$ is Riemannian, the norms of  $K_*(Y_{\alpha})(x)$
are constant and so $Y_{\alpha}(k^i)$ are bounded. Because of
$$\Delta Z^i_t= h^{\alpha}(\Delta Z_t)Y_{\alpha}(k^i(Z_{t_-})),$$
and  $h^{\alpha}$ being  bounded, $Z^i$ have bounded jumps and
so they are special semimartingales. This means that the processes
$Z^i$ can be decomposed in a unique way as
$$Z^i=B^i+M^{i,c}+M^{i,d},$$
where $B^i$ is a predictable process having a bounded variation, $M^{i,c}$ is a continuous local martingale and $M^{i,d}$ is a purely
discontinuous local martingale. If we consider the matrix
$$P=(Y_{\alpha}(k^i))|_{\stackrel{\alpha=1,...,n}{i=1,...,k_N}},$$
since $K$ is an immersion and
$Y_1,...,Y_n$ are point by point linearly independent,  $P$ is non-singular. Therefore there
exists a pseudoinverse
$\tilde{P}=(P^{\alpha}_i)|_{\stackrel{i=1,...,k_N}{\alpha=1,...,n}}$ such
that $\tilde{P} \cdot P=I_n$, $P \cdot \tilde{P}=Id|_{Im(P)}$. We can  choose, for example,
$\tilde{P}=(P^T \cdot P)^{-1} \cdot P^T$. Therefore, we can define
\begin{eqnarray*}
b^{\alpha}_t&=&\int_0^t{\tilde{P}^{\alpha}_i(Z_{s_-})dB^i_s+Y_{\beta}(\tilde{P}^{\alpha}_i)(Z_{s_-})\tilde{P}^{\beta}_j(Z_{s_-})d[M^{i,c},M^{j,c}]_s}\\
A^{\alpha\beta}_t&=&\int_0^t{\tilde{P}^{\alpha}_i(Z_{s_-})\tilde{P}^{\beta}_j(Z_{s_-})d[M^{i,c},M^{j,c}]_s}.
\end{eqnarray*}
Given $f,g \in \cinf(N)$ let us consider two extensions $\tilde{f},\tilde{g}$ in $\mathbb{R}^{k_N}$ which are constants with respect to a distribution $D \subset T\mathbb{R}^{k_N}|_{K(N)}$ which is transverse  to $TK(N)$, i.e.,  for any $Y,Y' \in D$, $Y(\tilde{f})=Y(\tilde{g})=0$ and $Y(Y'(f))=Y(Y'(g))=0$ (the existence of such kind of extensions is guaranteed by the existence of a tubular neighbourhood of $K(N)$).\\
By It\^o formula we have
\begin{equation}\label{equation_characteristic1}
\begin{array}{rcr}
f(Z_t)-f(Z_0)&=&\int_0^t{\partial_{k^i}(\tilde{f})(Z_{s_-})d\tilde{Z}^i_s}+\frac{1}{2} \int_0^t{\partial_{k^ik^j}(\tilde{f})(Z_{s_-})d[\tilde{Z}^i,\tilde{Z}^j]^c_s}+\\
&&+\sum_{0 \leq s \leq t}(f(Z_s)-f(Z_{s_-})-\Delta {Z}^i_s
\partial_{k^i}(\tilde{f})(Z_{s_-}))
\end{array}
\end{equation}
and the same formula holds for $g$. Recalling that
$[\tilde{Z}^i,\tilde{Z}^j]^c=[Z^i,Z^j]^c=[M^{i,c},M^{j,c}]$ and
that, for our choice of the extensions $\tilde{f},\tilde{g}$,
$$\partial_{k^i}(\tilde{f})=\tilde{P}_i^{\alpha}Y_{\alpha}(f),$$ we have
$$[f(Z),g(Z)]^c_t=\int_0^t{Y_{\alpha}(f)(Z_{s_-})Y_{\beta}(g)(Z_{s_-})dA^{\alpha\beta}_s}.$$
Finally, recalling that
\begin{eqnarray*}
\tilde{Z}^i_t=A^i_t+M^{i,c}_t+M^{i,d}_t+\sum_{0 \leq s \leq t}(\Delta \tilde{Z}_s^i-h^{\alpha}(\Delta Z_s)Y_{\alpha}(k^i)(Z_{s_-}))\\
\partial_{k^ik^j}(\tilde{f})=Y_{\beta}(Y_{\alpha}(f))\tilde{P}^{\alpha}_j\tilde{P}^{\beta}_i+Y_{\beta}(\tilde{P}^{\alpha}_j)
\tilde{P}^{\beta}_iY_{\alpha}(f),
\end{eqnarray*}
and using both equation \refeqn{equation_characteristic1} and Lemma
\ref{lemma_geometrical1} we obtain that
\begin{eqnarray*}
&f(Z_t)-f(Z_0)-\int_0^t{Y_{\alpha}(f)(Z_{s_-})db^{\alpha}_s}-\frac{1}{2}\int_0^t{Y_{\alpha}(Y_{\beta}(f))(Z_{s_-})dA^{\alpha\beta}_s}+&\\
&-\sum_{0 \leq s \leq t}(f(Z_s)-f(Z_{s_-})-h^{\alpha}(\Delta
Z_s)Y_{\alpha}(f)(Z_{s_-}))&
\end{eqnarray*}
is a local martingale.\\
The uniqueness of $\nu$ has already been proved using the uniqueness of the compensator of the random measure $\mu^Z$  (see \cite[Chapter II,
Theorem 1.8]{Jacod2003}). In order to prove the uniqueness of $b^{\alpha},A^{\alpha\beta}$ we use the  fact that a predictable
martingale of bounded variation is constant (see, e.g., \cite[Chapter III, Theorem 12]{Protter1990}). Indeed, if $(b',A',\nu)$ is another
characteristic triplet of $Z$, we  have that, for any $f,g \in \cinf(M)$,
\begin{eqnarray*}
&\int_0^t{Y_{\alpha}(f)(Z_{s_-})Y_{\beta}(g)(Z_{s_-})d(A^{\alpha\beta}_s-A'^{\alpha\beta}_s)}&\\
&\int_0^t{Y_{\alpha}(f)(Z_{s_-})d(b^{\alpha}_s-b'^{\alpha}_s)}-\int_0^t{Y_{\alpha}(Y_{\beta}(f))(Z_{s_-})d(A^{\alpha\beta}_s-A'^{\alpha\beta}_s)}&
\end{eqnarray*}
are local martingales. Since the processes involved in the previous integrals are predictable and $b,b',A,A'$ are of bounded variation, they are
local martingales having a vanishing bounded variation at the origin and so they are identically equal to $0$. Finally, by using the arbitrariness of $f,g$ and the
existence of a partition of unity for $N$, we find that $b-b'=0$ and $A-A'=0$ up to $\mathbb{P}$-null sets. ${}\hfill$ \end{proof}

\subsection{Gauge symmetries and semimartingales characteristics}\label{subsection_gauge_main}

In this subsection we provide  an equivalent method to verify the  conditions in Definition \ref{definition_gauge}
using the characteristics introduced in the previous subsection. \\
In particular, after introducing suitable geometric and probabilistic tools, we look for conditions to be satisfied by the characteristics of a
semimartingale in order to ensure that the semimartingale admits a gauge symmetry group.\\

First of all we need to study in more detail the role of the filtration $\mathcal{F}_t$ in Definition \ref{definition_gauge}. In fact, although the definition of gauge symmetry group apparently  concerns only the law of $Z$ and not the chosen filtration, there are examples of semimartingales $Z$ admitting a gauge symmetry group $\mathcal{G}$ with respect to a filtration $\mathcal{F}_t$ but such that $\mathcal{G}$ is no longer a gauge symmetry group for $Z$ if a different filtration $\mathcal{H}_t$ is chosen. For example, let $W$ be a
standard $n$ dimensional Brownian motion, let $\mathcal{F}_t$ be its natural filtration and let us put $\mathcal{H}_t=\mathcal{F}_t \vee \sigma(W_T)$. It is
well known that $W$ is a semimartingale with respect to both $\mathcal{F}_t$ and $\mathcal{H}_t$, but the rotations are a gauge group only with
respect to the filtration $\mathcal{F}_t$ and not with respect to $\mathcal{H}_t$. Indeed, let $B:\mathbb{R}^n \rightarrow O(n)$ be a measurable
map such that $B(x) \cdot x=(|x|,0,...,0)$. The constant process $B(W_T)$ is predictable with respect to the filtration $\mathcal{H}_t$ and it
is not adapted with respect to $\mathcal{F}_t$. On the other hand the semimartingale
$$\tilde{W}^{\alpha}_t=\int_0^t{B^{\alpha}_{\beta}(W_T)dW^{\beta}_s}=B^{\alpha}_{\beta}(W_T) W^{\beta}_t,$$
is not a Brownian motion since, for example, $\tilde{W}_T=(|W_T|,0,...,0)$ is not a Gaussian random variable. This phenomenon is due to the fact
that the family of the $\mathcal{H}_t$-predictable processes is too large for preserving the invariance property of Brownian motion. In order to
avoid this kind of phenomena, and ensuring that a gauge symmetry is a property of the law of the process $Z$ and not of
its filtration, we introduce the following definition.

\begin{definition}
Let $Z$ be a semimartingale with respect to the filtration $\mathcal{F}_t$. We say that the filtration $\mathcal{F}_t$ is a generalized natural
filtration if there exists a version of the characteristic triplet $(b,A,\nu)$ of $Z$ ( with respect to the filtration $\mathcal{F}_t$), which is
predictable with respect to the natural filtration $\mathcal{F}^Z_t \subset \mathcal{F}_t$ of the semimartingale $Z$.
\end{definition}

It is important to note that if $(b,A,\nu)$ are the characteristics of a semimartingale $Z$ with respect to its natural filtration, then they
are also the characteristics of $Z$ with respect to any generalized natural filtration for $Z$. For this reason, hereafter, whenever we consider
a generalized natural filtration $\mathcal{F}_t$ for $Z$ we can use the characteristics $(b,A,\nu)$ with respect to the natural filtration of
$Z$ as the characteristics of $Z$ with respect to $\mathcal{F}_t$.\\

Let us  consider the probability space
$$\Omega^c=\Omega_A \times \Omega_B,$$
where $\Omega_A=\mathcal{D}_{1_N}([0,+\infty),N)$ is the space of c\acc{a}dl\acc{a}g functions $\omega_A(t)$ taking values on $N$ and such that
$\omega_A(0)=1_N$, and $\Omega_B=L_{loc}^{\infty}([0,+\infty),\mathcal{G})$ is
the set of locally bounded and measurable functions taking values in $\mathcal{G}$.\\
On the set $\Omega_A$ we consider the standard filtration $\mathcal{F}^A_{t}$ of $\mathcal{D}_{1_N}([0,+\infty),N)$ and on $\Omega_B$  the filtration $\mathcal{F}^B_t$ generated by the standard filtration of $C^0([0,+\infty),\mathcal{G}) \subset \Omega_B$ (usually
called the predictable filtration). We denote by $\pi_A, \pi_B$ the projections of $\Omega$ on $\Omega_A$ and $\Omega_B$ respectively and so we define $\mathcal{F}^c_t$
$\mathcal{F}^c_t=\sigma(\pi_A^{-1}(\mathcal{F}^A_t),\pi^{-1}_B(\mathcal{F}^B_t))$. We call $\Omega^c$ the canonical probability space and
$\mathcal{F}^c_t$ the natural filtration on $\Omega^c$. \\
We need the space $\Omega_A$ in order  to define a semimartingale $Z$ on $N$, and the space $\Omega_B$ in order  to define a locally bounded predictable process
taking values on $\mathcal{G}$. Choosing a particular semimartingale $Z$ on $N$ and a predictable process $G_t$ on $\mathcal{G}$ is equivalent to
fixing a probability measure $\mathbb{P}$ on $\Omega$ such that $Z_t(\omega)=\pi_A(\omega)(t)$ is a semimartingale on $N$ (the fact that the
process $G_t(\omega)=\pi_B(\omega)(t)$ is a locally bounded predictable process
is automatically guaranteed by the choices of the space $\Omega_B$ and the filtration $\mathcal{F}^B_t$).\\
Given  an $N$ valued semimartingale $Z$ and  a generic predictable process $G_t$  taking values in $\mathcal{G}$, both defined on a
probability space $(\Omega,\mathcal{F},\mathcal{F}_t,\mathbb{P})$, there exist a natural
probability measure $\mathbb{P}^c=M_*(\mathbb{P})$ on the canonical probability space $\Omega^c$ and a
natural map
$$\begin{array}{rlcc}
M:& \Omega & \longrightarrow & \Omega^c \\
& \omega& \longmapsto & (Z_t(\omega),G_t(\omega))\end{array}$$ which puts the couple $(Z_t,G_t)$ in canonical form. Thus, fixing the process
$G_t$ and the law $\mathbb{P}^Z$ of the semimartingale $Z_t$ is equivalent to fixing the probability law $\mathbb{P}^c$ on $\Omega^c$ so that
the restriction of $\mathbb{P}^c$ to the $\Omega_A$ measurable subsets,  $\mathbb{P}=\mathbb{P}^c|_{\mathcal{F}^A}$, is exactly $\mathbb{P}^Z$.
As a consequence, proving a statement  involving only the measurable objects $Z_t,G_t$ which is independent from the choice of a specific
predictable process $G_t$, is equivalent to proving the same statement on the probability space $\Omega^c$ with respect to the canonical
processes $\omega_A(t),\omega_B(t)$ and for a suitable subset of probability laws $\mathbb{P}^c$ on $\Omega^c$ such that
$\mathbb{P}|_{\mathcal{F}^A}=\mathbb{P}^Z$. This subset depends on the filtration  $\mathcal{F}_t$ of the probability space chosen. In
particular if $\mathcal{F}_t$ is a generalized natural filtration for $Z$, then $\tilde{\mathcal{F}}^c_t$ is a generalized natural filtration for
$\omega_A(t)$ (where $\tilde{\mathcal{F}}^c_t$ is the completion of $\mathcal{F}^c_t$ with respect to $\mathbb{P}^c$). Since we consider only
generalized natural filtrations for the semimartingale $Z$, we suppose that $\mathbb{P}^c$ is such that $\tilde{\mathcal{F}}^c_t$ is a generalized natural
filtration.\\
For this reason, in the following  we shall only consider  the canonical probability space $\Omega^c$ with law $\mathbb{P}=\mathbb{P}^c$ and denote by $Z_t$
the canonical semimartingale $\omega_A(t)$ and by $G_t$ the canonical predictable process $\omega_B(t)$.\\
In the same way, we identify  the solution $\tilde{Z}$ to the SDE $d\tilde{Z}_t=\Xi_{G_t}(dZ_t)$ with the measurable map
$\Lambda_A:\Omega \rightarrow \Omega_A$ such that $\tilde{Z}_t(\omega)=\Lambda_{A}(\omega)(t)$. We can extend the map $\Lambda_A$ to a map
$\Lambda:\Omega \rightarrow \Omega$ given by
$$\Lambda(\omega)=(\Lambda_A(\omega),\pi_2(\omega)),$$
defining a new probability measure
$\mathbb{P}'=\Lambda_*(\mathbb{P})$. The map $\Lambda$  is $\mathbb{P}$ invertible, i.e. there exists a map $\Lambda'$ such that
$\Lambda \circ \Lambda'$ is equal to the identity map up to $\mathbb{P}'$ null sets and the map $\Lambda' \circ \Lambda$ is equal to the
identity up to $\mathbb{P}$ null sets. The construction of the  map $\Lambda'$ is similar to the construction of $\Lambda$ starting from  the stochastic
differential equation $dZ_t=\Xi_{(G_t)^{-1}}(d\tilde{Z}_t)$ and the measure $\mathbb{P}'$. The proof of the fact that $\Lambda'$ is the
$\mathbb{P}'$ inverse of $\Lambda$ and hence $\Lambda$ is the $\mathbb{P}$ inverse
of $\Lambda'$, is based on Theorem \ref{theorem_gauge1}. It is important to note that $\hat{\mathcal{F}}^c_t$, i.e.
the completion of $\mathcal{F}^c_t$ with respect to the probability $\mathbb{P}'$, could not be a generalized natural
filtration for $\omega_A(t)$ even if $\tilde{\mathcal{F}}^c_t$ is a generalized natural filtration for $Z_t$ under $\mathbb{P}$. \\
Given the probability law $\mathbb{P}^Z$ on $\Omega_A$, by Theorem \ref{theorem_characteristic1} there exist some measurable and predictable
functions $b^{\alpha},A^{\alpha\beta}:\Omega_A \times \mathbb{R}_+ \rightarrow \mathbb{R}$ and a random predictable measure $\nu:\Omega_A
\rightarrow \mathcal{M}(\mathbb{R}_+ \times N)$ which are the characteristics of the canonical process $Z_t(\omega)=\pi_A(\omega(t))$ and are
uniquely defined up to $\mathbb{P}^Z$ null-sets. The characteristic triplets $(b,A,\nu)$, seen as $\mathcal{F}^A$ measurable objects, are
uniquely determined by the probability measure $\mathbb{P}^Z$. The converse, namely the fact  that the $\mathcal{F}^A$ measurable objects
$(b,A,\nu)$ uniquely individuate the probability law $\mathbb{P}^Z$, is in general not true (the reader can think, for example, to  diffusion processes whose
martingale problem admits multiple solutions). In the case in which the triplet $(b,A,\nu)$ uniquely determines the probability law $\mathbb{P}^Z$ on
$\Omega_A$ we say that the triplet $(b,A,\nu)$ uniquely individuates the law of $Z$. Examples of this situation  are, e.g., the $\mathbb{R}^n$ Brownian
motion, $\mathbb{R}^n$ L\' evy processes,
diffusion processes with a unique solution to the associated martingale problem, and point processes. If
the law $\mathbb{P}$ on $\Omega^c$ is such that $\mathbb{P}|_{\mathcal{F}^A}=\mathbb{P}^Z$ and the filtration $\tilde{\mathcal{F}}^c_t$ is a
generalized natural filtration for $\omega_A(t)$, then the same $\mathcal{F}^A$ measurable characteristics $(b,A,\nu)$, viewed as $\Omega^c$
semimartingales,  are  characteristics of $Z$ with respect to $\tilde{\mathcal{F}}^c_t$ as well. Obviously it is possible to define other
characteristic triplets $(\bar{b},\bar{A},\bar{\nu})$ of $Z_t$ on $\Omega^c$ which are only $\tilde{\mathcal{F}}^c$ adapted and not
$\mathcal{F}^A_t$ adapted. The characteristics
$(\bar{b},\bar{A},\bar{\nu})$ are equal to $(b,A,\nu)$ up to $\mathbb{P}$ null sets (and not only up to $\mathbb{P}^Z$ null sets).\\

Let us now consider a map
$\Xi_g:N \rightarrow N$ such that $\Xi_g(1_N)=1_N$. This means
that the tangent map $T\Xi_g$ of $\Xi_g$ sends the tangent space
of the identity $TN|_{1_N}$ into itself. Recalling that the Lie
algebra $\mathfrak{n}$ associated with $N$ is exactly the tangent
space to the identity, we have that there exists a map
$$\Upsilon_g=T\Xi_g|_{1_N}:\mathfrak{n} \rightarrow \mathfrak{n}.$$
The map $\Upsilon$ has the following property: if $Y$ is any right invariant vector field on $N$ and  $\hat{\Xi}_g(\tilde{z},z)=\Xi_g(z)
\cdot \tilde{z}$, then, by  definition of right invariant vector fields, for any smooth function $f \in
\cinf(N)$, we have
$$Y^z(f \circ \hat{\Xi}_g)(\tilde{z},1_N)=\Upsilon_g(Y)(f)(\tilde{z}),$$
where $Y^z$ denotes the vector fields $Y$ applied to the $z^{\alpha}$ variables. Going further along in this way, instead of working with
first derivatives we can work with second derivatives and we can define a linear map
$$O_g:\mathfrak{n} \otimes \mathfrak{n} \rightarrow \mathfrak{n}$$
such that, for any two right invariant vector fields $Y,Y'$ defined on $N$, we have
$$Y'^z(Y^z(f \circ \hat{\Xi}_g))(\tilde{z},1_N)=\Upsilon_g(Y')(\Upsilon_g(Y)(f))(\tilde{z})+O_g(Y,Y')(f)(\tilde{z}).$$
If we fix a basis $Y_1,...,Y_n$ of $\mathfrak{n}$ (and so of right-invariant vector fields on $N$), the linear maps $\Upsilon_g,O_g$ become
matrices $\Upsilon^{\alpha}_{g,\beta}$ and $O^{\alpha}_{g,\beta\gamma}$, where
\begin{eqnarray*}
\Upsilon_g(Y_{\beta})=\Upsilon_{g,\beta}^{\alpha}Y_{\alpha}\\
O_g(Y_{\beta},Y_{\gamma})=O^{\alpha}_{g,\beta\gamma}Y_{\alpha}.
\end{eqnarray*}

\begin{theorem}\label{theorem_characteristic2}
Let $Z$ be a semimartingale on a Lie group $N$ with characteristic triplet $(b(\omega_A),A(\omega_A),\nu(\omega_A))$. Suppose that $Z$ admits
$\mathcal{G}$ with action $\Xi_g$ as gauge symmetry group with respect to any generalized natural filtration, then if $\mathbb{P}$ is a measure
on $\Omega^c$ such that $\tilde{\mathcal{F}}_t$ is a generalized natural filtration with respect both $Z_t$ and $d\tilde{Z}_t=\Xi_{G_t}(dZ_t)$, we have
\begin{equation}\label{equation_characteristic8}
\begin{array}{rcl}
db^{\alpha}_t(\omega)&=&\Upsilon^{\alpha}_{g(\omega_B),\beta}db_t^{\beta}(\pi_A(\Lambda'(\omega)))+\frac{1}{2}
O^{\alpha}_{g(\omega_B),\beta\gamma}dA^{\beta\gamma}_t(\pi_A(\Lambda'(\omega)))+\\
&&+\int_N{(h^{\alpha}(z')-h^{\beta}(\Xi_{g^{-1}(\omega_B)}(z'))\Upsilon^{\alpha}_{g(\omega_B),\beta})\nu(\pi_A(\Lambda'(\omega)),dt,dz')}
\end{array}
\end{equation}
\begin{eqnarray}
dA_t^{\alpha\beta}(\omega)&=&\Upsilon^{\alpha}_{g(\omega_B),\gamma}\Upsilon^{\beta}_{g(\omega_B),\delta}dA_t^{\gamma\delta}
(\pi_A(\Lambda'(\omega)))\label{equation_characteristic9}\\
\nu(\omega,dt,dz)&=&\Xi_{g(\omega_B)*}(\nu(\pi_A(\Lambda'(\omega)),dt,dz)),\label{equation_characteristic10}
\end{eqnarray}
up to a $\mathbb{P}'=\Lambda_*(\mathbb{P})$ null set. Furthermore, if $\tilde{b},\tilde{A},\tilde{\nu}$ are $\pi^{-1}_A(\mathcal{F}^A)$
measurable, the previous equalities hold with respect to $\mathbb{P}^Z$ null sets. Finally, if $(b,A,\nu)$ uniquely determines the law of
$Z$, the previous conditions are also sufficient for the existence of a gauge symmetry group.
\end{theorem}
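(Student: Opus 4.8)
The plan is to compute the characteristics of the transformed process $\tilde{Z}$, defined by $d\tilde{Z}_t=\Xi_{G_t}(dZ_t)$, in terms of the triplet $(b,A,\nu)$ of $Z$ and the infinitesimal data $\Upsilon_g,O_g$ of the action, and then to convert the gauge symmetry hypothesis (equality in law of $\tilde{Z}$ and $Z$) into an equality of characteristics. Throughout I would work on the canonical space $\Omega^c$ with the two measures $\mathbb{P}$ and $\mathbb{P}'=\Lambda_*(\mathbb{P})$, using that under $\mathbb{P}'$ the canonical semimartingale $\omega_A$ represents $\tilde{Z}$ while $\pi_A(\Lambda'(\omega))$ recovers $Z$, and that $\Lambda$ is $\mathbb{P}$-invertible with inverse $\Lambda'$ (both facts established before the statement via Theorem \ref{theorem_gauge1}).

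First I would carry out the computation of the characteristics of $\tilde{Z}$. Fixing a test function $f\in\cinf(N)$ and applying It\^o's formula to $f(\tilde{Z})$ together with the defining SDE of $\tilde{Z}$, Lemma \ref{lemma_geometrical1} and Remark \ref{remark_geometrical1} (exactly as in the proof of Theorem \ref{theorem_gauge1}), I would isolate the predictable bounded-variation part, the continuous-bracket part and the jump part of the resulting decomposition and compare them with Definition \ref{definition_characteristic}. The first-order contribution is governed by $Y^z(f\circ\hat{\Xi}_g)(\tilde{z},1_N)=\Upsilon_g(Y)(f)(\tilde{z})$, which turns the drift of $Z$ into its $\Upsilon_g$-image and, applied twice to the continuous bracket, yields the transformation $dA^{\alpha\beta}\mapsto\Upsilon^{\alpha}_{g,\gamma}\Upsilon^{\beta}_{g,\delta}dA^{\gamma\delta}$, i.e. \refeqn{equation_characteristic9}. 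The second-order contribution is governed by $Y'^z(Y^z(f\circ\hat{\Xi}_g))(\tilde{z},1_N)=\Upsilon_g(Y')(\Upsilon_g(Y)(f))(\tilde{z})+O_g(Y,Y')(f)(\tilde{z})$, whose extra term $O_g$ feeds the $\frac{1}{2}O^{\alpha}_{g,\beta\gamma}dA^{\beta\gamma}$ contribution to the drift. Finally, since the multiplicative jump of $\tilde{Z}$ is $\Delta\tilde{Z}_s=\Xi_{G_s}(\Delta Z_s)$, the jump measure of $\tilde{Z}$ is the pushforward $\Xi_{g*}\nu$, which is \refeqn{equation_characteristic10}, and the discrepancy between the Hunt truncation $h^{\alpha}(\Delta\tilde{Z})$ used for $\tilde{Z}$ and the $\Upsilon_g$-image of the truncation $h^{\beta}(\Delta Z)$ used for $Z$ produces the integral correction in \refeqn{equation_characteristic8}. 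Tracking this re-truncation term against the compensator $\nu$ (using that a predictable local martingale of bounded variation vanishes, as in Theorem \ref{theorem_characteristic1}) is where I expect the main technical effort.

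Next I would translate these identities into the statement. By Definition \ref{definition_gauge} the gauge symmetry hypothesis gives $\tilde{Z}$ and $Z$ the same law, hence $\mathbb{P}'|_{\mathcal{F}^A}=\mathbb{P}^Z$, so the characteristic functions $(b,A,\nu)$ determined by $\mathbb{P}^Z$ (Theorem \ref{theorem_characteristic1}) are simultaneously the characteristics of $\omega_A$ under $\mathbb{P}$ (read along $Z$) and under $\mathbb{P}'$ (read along $\tilde{Z}$). Equating the characteristics of $\tilde{Z}$ just computed — expressed through $(b,A,\nu)$ evaluated at $\pi_A(\Lambda'(\omega))$ — with $(b,A,\nu)$ evaluated along $\omega_A$ gives exactly \refeqn{equation_characteristic8}, \refeqn{equation_characteristic9} and \refeqn{equation_characteristic10}, valid up to a $\mathbb{P}'$-null set by the $\mathbb{P}$-a.s. uniqueness of the triplet. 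When in addition $\tilde{b},\tilde{A},\tilde{\nu}$ are $\pi_A^{-1}(\mathcal{F}^A)$-measurable, both sides are $\mathcal{F}^A$-measurable and the $\mathbb{P}'$-a.s. identity descends to a $\mathbb{P}^Z$-a.s. identity precisely because $\mathbb{P}'|_{\mathcal{F}^A}=\mathbb{P}^Z$.

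For the converse, assuming that $(b,A,\nu)$ uniquely individuates the law of $Z$, I would read the computation of the first step backwards. The expression of the characteristics of $\tilde{Z}$ through those of $Z$ holds for every locally bounded predictable $G_t$ irrespective of any symmetry; imposing \refeqn{equation_characteristic8}--\refeqn{equation_characteristic10} then forces the characteristics of $\tilde{Z}$ to coincide with the functions $(b,A,\nu)$ evaluated along the trajectory of $\tilde{Z}$ itself. Since $\tilde{Z}_0=1_N=Z_0$ and the triplet $(b,A,\nu)$ determines the law uniquely, $\tilde{Z}$ and $Z$ then share the same law, which is exactly Definition \ref{definition_gauge}. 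The delicate points I would watch are the interplay of the filtrations — ensuring that $\tilde{\mathcal{F}}^c_t$ is a generalized natural filtration for both $Z$ and $\tilde{Z}$, so that the $\mathcal{F}^A$-measurable triplet may legitimately be used on both sides — and the correct accounting of the truncation correction, the only place where the difference between the two Hunt truncations enters.
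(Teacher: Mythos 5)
Your proposal is correct and follows essentially the same route as the paper: you first establish the transformation rule for the characteristics of $d\tilde{Z}_t=\Xi_{G_t}(dZ_t)$ (the paper's Lemma \ref{lemma_characteristic1}, proved exactly as you describe via It\^o's formula, Lemma \ref{lemma_geometrical1}, the properties of $\Upsilon_g,O_g$ and the pushforward of the jump measure), and then convert equality in law into equality of characteristics on the canonical space via $\Lambda$ and $\Lambda'$, handling the $\mathcal{F}^A$-measurability and the converse under unique determination of the law precisely as the paper does.
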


Before proving the theorem we study the transformations of the characteristics under (canonical) semimartingale changes.

\begin{lemma}\label{lemma_characteristic1}
If $Z$ is a semimartingale with characteristics $(b,A,\nu)$, then
$d\tilde{Z}=\Xi_{G_t}(dZ)$ is a semimartingale with characteristics
\begin{eqnarray*}
d\tilde{b}^{\alpha}_t&=&\Upsilon^{\alpha}_{G_t,\beta}db^{\beta}_t+\frac{1}{2}O^{\alpha}_{G_t,\beta\gamma}dA^{\beta\gamma}_t+\int_N{(h^{\alpha}(z')-h^{\beta}(\Xi_{G_t^{-1}}(z'))\Upsilon^{\alpha}_{G_t,\beta})\nu(dt,dz')}\\
d\tilde{A}^{\alpha\beta}_t&=&\Upsilon^{\alpha}_{G_t,\gamma}\Upsilon^{\beta}_{G_t,\delta}dA^{\gamma\delta}_t\\
\tilde{\nu}&=&\Xi^*_{G_t}(\nu).
\end{eqnarray*}
\end{lemma}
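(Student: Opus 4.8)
The plan is to verify directly, for the transformed semimartingale $\tilde Z$, the three defining local-martingale conditions \refeqn{equation_characteristic2}, \refeqn{equation_charcteristic3} and \refeqn{equation_characteristic4} of Definition \ref{definition_characteristic}, taking the triplet $(b,A,\nu)$ of $Z$ as known and using the stated expressions as an ansatz for $(\tilde b,\tilde A,\tilde\nu)$. By Theorem \ref{theorem_characteristic1} such a triplet exists and is unique, so it suffices to check that the proposed expressions satisfy the three conditions, after which uniqueness identifies them with the characteristics of $\tilde Z$. The computational engine is the proof of Theorem \ref{theorem_gauge1}: since $\tilde Z$ solves $d\tilde Z_t=\Xi_{G_t}(dZ_t)$, its driving relation is governed by $\hat\Xi_g(\tilde z,z)=\Xi_g(z)\cdot\tilde z$, so that for $f\in\cinf(N)$ the increment $f(\tilde Z_t)-f(\tilde Z_0)$ is obtained by applying It\^o's formula and Lemma \ref{lemma_geometrical1} to $Z$, with the first- and second-order coefficients along the noise given precisely by the right-invariant derivatives $Y^z_\beta(f\circ\hat\Xi_{G_s})$ and $Y^z_\gamma(Y^z_\delta(f\circ\hat\Xi_{G_s}))$ evaluated at $1_N$, i.e. by $\Upsilon$ and $O$ through their very definitions.

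I would treat $\tilde\nu$ first, as it is the most transparent. At a jump time $\Delta\tilde Z_t=\tilde Z_t\cdot\tilde Z_{t_-}^{-1}=\Xi_{G_t}(\Delta Z_t)$, hence for any bounded $p$ vanishing near $1_N$ one has $\sum_{s\le t}p(\Delta\tilde Z_s)=\sum_{s\le t}(p\circ\Xi_{G_s})(\Delta Z_s)$. Applying condition \refeqn{equation_characteristic2} of $Z$ to the predictably time-dependent integrand $p\circ\Xi_{G_s}$ (still vanishing near $1_N$ because $\Xi_g(1_N)=1_N$) and using the covariance of the jump compensator under the pushforward by $\Xi_{G_s}$, the compensated sum $\sum_{s\le t}p(\Delta\tilde Z_s)-\int_0^t\int_N p(z')\,\tilde\nu(ds,dz')$ is a local martingale with $\tilde\nu=\Xi^*_{G_t}(\nu)$, as claimed.

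For $\tilde A$ I would use condition \refeqn{equation_charcteristic3}. Since $[f(\tilde Z),g(\tilde Z)]^c$ and the integral against $\tilde A$ are both continuous of bounded variation, their difference is a continuous local martingale of finite variation, hence zero, so it is enough to compute the continuous bracket. The continuous-martingale part of $f(\tilde Z)$ carries, along the $\beta$-th continuous martingale increment of $Z$, the coefficient $Y^z_\beta(f\circ\hat\Xi_{G_s})(\tilde Z_{s_-},1_N)=\Upsilon^\alpha_{G_s,\beta}Y_\alpha(f)(\tilde Z_{s_-})$. Therefore $d[f(\tilde Z),g(\tilde Z)]^c=\Upsilon^\alpha_{G_s,\gamma}\Upsilon^\beta_{G_s,\delta}\,Y_\alpha(f)(\tilde Z_{s_-})Y_\beta(g)(\tilde Z_{s_-})\,dA^{\gamma\delta}_s$; comparing with \refeqn{equation_charcteristic3} for $\tilde Z$ and using the arbitrariness of $f,g$ together with a partition of unity, exactly as in the uniqueness argument of Theorem \ref{theorem_characteristic1}, yields $d\tilde A^{\alpha\beta}=\Upsilon^\alpha_{G_s,\gamma}\Upsilon^\beta_{G_s,\delta}\,dA^{\gamma\delta}$.

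The drift $\tilde b$ is the delicate part and the place where I expect the main obstacle. I would apply condition \refeqn{equation_characteristic4} of $Z$ to the predictably parametrized function $F_s(z)=f(\hat\Xi_{G_s}(\tilde Z_{s_-},z))$, noting that $F_s(\Delta Z_s)=f(\tilde Z_s)$ and $F_s(1_N)=f(\tilde Z_{s_-})$, and then compare with the form \refeqn{equation_characteristic4} required for $\tilde Z$. Three matchings occur simultaneously. The first-order term produces $\Upsilon^\alpha_{G_s,\beta}\,db^\beta$, since $Y_\beta(F_s)(1_N)=\Upsilon^\alpha_{G_s,\beta}Y_\alpha(f)(\tilde Z_{s_-})$. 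The second-order term, via the defining relation $Y^z_\gamma(Y^z_\delta(f\circ\hat\Xi_g))(\tilde z,1_N)=\Upsilon^\alpha_{g,\gamma}\Upsilon^\beta_{g,\delta}Y_\alpha(Y_\beta(f))(\tilde z)+O^\alpha_{g,\gamma\delta}Y_\alpha(f)(\tilde z)$, splits into the part already absorbed into $\tilde A$ and the residual $\frac{1}{2}O^\alpha_{G_s,\gamma\delta}\,dA^{\gamma\delta}$. Finally, the jump truncations differ: condition \refeqn{equation_characteristic4} for $\tilde Z$ uses $h^\alpha(\Delta\tilde Z_s)=h^\alpha(\Xi_{G_s}(\Delta Z_s))$, whereas the expansion from $Z$ produces $\Upsilon^\alpha_{G_s,\beta}h^\beta(\Delta Z_s)$; expressing the jumps of $Z$ in terms of those of $\tilde Z$ through the inverse of $\Xi_{G_s}$, the difference $h^\alpha(z')-\Upsilon^\alpha_{G_s,\beta}h^\beta(\Xi_{G_s^{-1}}(z'))$ vanishes to second order at $1_N$, so it is integrable and can be compensated against the jump measure of $\tilde Z$, producing the integral term of the statement. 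The genuinely non-routine points, common to all three steps, are the legitimacy of applying the characteristic identities of $Z$ to the predictably parametrized families $p\circ\Xi_{G_s}$, $f\circ\hat\Xi_{G_s}$ and $F_s$ — which rests on the predictability and local boundedness of $G$ and on the continuity hypotheses on $\Xi$ — together with the careful truncation bookkeeping that guarantees integrability of the compensated jump term; both are handled as in the corresponding steps of Theorem \ref{theorem_gauge1}.
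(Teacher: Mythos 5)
Your proof is correct and takes essentially the same route as the paper's: the paper likewise identifies $\tilde\nu$ as the compensator of $\mu^{\tilde Z}=\Xi_{G_t,*}(\mu^Z)$ (using that $z\mapsto h(\Xi_{G_s}(z))$ is a predictable integrand), computes $[f(\tilde Z),g(\tilde Z)]^c$ through Lemma \ref{lemma_geometrical1} and the chain rule so that the coefficients $\Upsilon_{G_s}$ appear, and concludes by the uniqueness part of Theorem \ref{theorem_characteristic1}, leaving the drift computation---your three-way matching with the $O$-term and the truncation correction---as ``entirely similar.'' One cosmetic remark: the truncation-mismatch term you derive is naturally compensated against $\tilde\nu=\Xi_{G_t,*}(\nu)$, which is how the measure written as $\nu(dt,dz')$ in the stated drift formula must be read (the distinction evaporates in the gauge-symmetry applications, where $\tilde\nu=\nu$).
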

\begin{proof}
We denote by $(\tilde{b},\tilde{A},\tilde{\nu})$ the characteristic triplet of $\tilde{Z}$. Since the jumps of $\tilde{Z}$ are $\Delta
\tilde{Z}_t=\Xi_{G_t}(\Delta Z_t)$ and the jump times of $\tilde{Z}$ are the same of $Z$  we have, using the notation of  Theorem
\ref{theorem_characteristic1},
$$\mu^{\tilde{Z}}(dt,dz)=\sum_{s \geq 0}I_{\Delta \tilde{Z}_s \not =0}(s)\delta_{(s,\Delta \tilde{Z}_s)}(dt,dz)=\sum_{s \geq 0}
I_{\Delta Z_s \not =0}(s)\delta_{(s,\Xi_{G_s}(\Delta Z_s))}(dt,dz).$$ 
If we identify, with a slight abuse of notation, the push-forward of the
map $(s,z) \rightarrow (s,\Xi_{G_s}(z))$ with the push-forward of the map $(s,z) \rightarrow \Xi_{G_s}(z)$, we have
$$\delta_{(s,\Xi_{G_s}(\Delta Z_s))}(du,dz)=\Xi_{G_s,*}(\delta_{(s,\Delta Z_s)})(du,dz),$$
and so
$$\mu^{\tilde{Z}}=\Xi_{G_t,*}(\mu^Z).$$
If we consider a function $h:N \rightarrow \mathbb{R}$ which is identically zero in a neighbourhood of $1_N$, by definition of push-forward of a measure we have
$$\int_0^t{\int_N{h(z)\Xi_{G_s,*}(\mu^Z-\nu)(ds,dz)}}=\int_0^t{\int_N{h(\Xi_{G_s}(z))(\mu^Z-\nu)(ds,dz)}}.$$
Furthermore $\int_0^t{\int_N{h(\Xi_{G_s}(z))(\mu^Z-\nu)(ds,dz)}}$ is a martingale, since $h(\Xi_{G_s}(z))$ is a predictable function and $\nu$
is the predictable projection of the random measure $\mu^Z$. Since $\mu^{\tilde{Z}}=\Xi_{G_t,*}(\mu^Z)$ we have that $\Xi_{G_t,*}(\nu)$ is the
predictable projection of the measure $\mu^{\tilde{Z}}$ and $\tilde{\nu}=\Xi_{G_t,*}(\nu)$.\\
For the formulas of $\tilde{A}$ and $\tilde{b}$ we use the definition of solution to a canonical SDE, Lemma \ref{lemma_geometrical1} and the
properties of $\Upsilon_g$ and $O_g$. We make the proof only for $\tilde{A}$,  the proof for $\tilde{b}$ being entirely similar. \\
Fixing an immersion $K:N \rightarrow \mathbb{R}^{k_N}$, by
definition and Lemma \ref{lemma_geometrical1}, for any functions
$f,g \in \cinf(N)$, the properties of $\Upsilon_g$ ensure that
\begin{eqnarray*}
[f(\tilde{Z}),g(\tilde{Z})]_t^c&=&\int_0^t{\partial_{k'^i}(\tilde{f} \circ \overline{\Xi})(\tilde{Z}_{s_-},Z_{s_-},Z_{s_-})\partial_{k'^j}(\tilde{g}\circ \overline{\Xi})(\tilde{Z}_{s_-},Z_{s_-},Z_{s_-})d[k^i(Z),k^j(Z)]_s^c}\\
&=&\int_0^t{Y^{z'}_{\alpha}(\tilde{f} \circ \overline{\Xi})(\tilde{Z}_{s_-},Z_{s_-},Z_{s_-})Y^{z'}_{\beta}(\tilde{g} \circ \overline{\Xi})(\tilde{Z}_{s_-},Z_{s_-},Z_{s_-})}\\
&&\tilde{P}^{\alpha}_i(Z_{s_-})\tilde{P}^{\beta}_j(Z_{s_-})d[k^i(Z),k^j(Z)]^c_s=\\
&=&\int_0^t{Y_{\gamma}(f)(\tilde{Z}_{s_-})Y_{\delta}(g)(\tilde{Z})\Upsilon^{\gamma}_{G_s,\alpha}\Upsilon^{\delta}_{G_s,\beta}\tilde{P}^{\alpha}_i(Z_{s_-})\tilde{P}^{\beta}_j(Z_{s_-})d[k^i(Z),k^j(Z)]^c_s},
\end{eqnarray*}
where $\tilde{g},\tilde{f}$ are two extensions of $f,g$ on
$\mathbb{R}^{k_N}$, and $\tilde{P}$ is a pseudoinverse matrix of
$P=(Y_{\alpha}(k^i))$ (see Theorem \ref{theorem_characteristic1}).
By definition of characteristics we have that
\begin{eqnarray*}
&[k^i(Z),k^j(Z)]_t^c-\int_0^t{Y_{\alpha}(k^i)(Z_{s_-})Y_{\beta}(k^j)(Z_{s_-})dA^{\alpha\beta}_s}=&\\
&=[k^i(Z),k^j(Z)]_t^c-\int_0^t{P^i_{\alpha}(Z_{s_-})P^j_{\beta}(Z_{s_-})dA^{\alpha\beta}_s}&
\end{eqnarray*}
is a local martingale. This means that
$$[f(\tilde{Z}),g(\tilde{Z})]^c_t-\int_0^t{Y_{\gamma}(f)(\tilde{Z}_{s_-})Y_{\delta}(g)(\tilde{Z}_{s_-})\Upsilon^{\gamma}_{G_s,\alpha}\Upsilon^{\delta}_{G_s,\beta}dA^{\alpha\beta}_s}$$
is a local martingale. ${}\hfill$
\end{proof}\\

\begin{proof}[Proof of Theorem \ref{theorem_characteristic2}]
We cannot directly use  Lemma \ref{lemma_characteristic1} to compare $(b,A,\nu)$ with $(\tilde{b},\tilde{A},\tilde{\nu})$, since
 $Z$ and $\tilde{Z}$, where $d\tilde{Z}_t=\Xi_{G_t}(dZ_t)$, are two different processes being two different
 functions  from $\Omega^c \times \mathbb{R}_+$ into $N$. Indeed $Z_t(\omega)=\pi_A(\omega)(t)$,
while $\tilde{Z}_t(\omega)=\pi_A(\Lambda(\omega))(t)$.\\
Since $\Lambda'$ is the $\mathbb{P}'$ inverse of $\Lambda$, $\tilde{Z}(\Lambda'(\omega))$ is exactly the same process as $Z$ (as functions
defined on $\Omega^c$). If $\tilde{Z}(\Lambda')$ and $Z$ have the same law, and since both the filtration $\hat{\mathcal{F}}^c_t$ and $\tilde{\mathcal{F}}^c_t$ are canonical, they necessarily have the same characteristics up to a $\mathbb{P}'$
null set and therefore $b(\omega)=\tilde{b}(\Lambda'(\omega))$, $A(\omega)=\tilde{A}(\Lambda'(\omega))$ and
$\nu(\omega)=\tilde{\nu}(\Lambda'(\omega))$. If $\tilde{b}(\Lambda')$, $\tilde{A}(\Lambda'(\omega))$ and $\tilde{\nu}(\Lambda'(\omega))$ are
$\pi_A^{-1}(\mathcal{F}^A_t)$ measurable
(usually they are only $\hat{\mathcal{F}}^c_t$ measurable) they are then equal to $b,a$ and $\nu$ up to a null set with respect to
 $\pi_{A*}(\mathbb{P})=\pi_{A*}(\mathbb{P}')$.\\
Obviously if $(b,A,\nu)$ uniquely  identifies  in $\Omega_A$ the law of $Z$, the condition stated in the theorem is also sufficient.${}\hfill$
\end{proof}

\subsection{Gauge symmetries of L\'evy processes}\label{subsection_gauge_Levy}

Generalizing \cite{Jacod2003} we introduce the following definition.

\begin{definition}\label{definition_Levy}
A c\acc{a}dl\acc{a}g semimartingale $Z$ on a Lie group $N$ is called an \emph{independent increments process} if its characteristics $(b,A,\nu)$ are deterministic.\\
The process $Z$ is a L\'evy process if $b_t=b_0t,A_t=A_0 t,\nu(dt,dx)=\nu_0(dx)dt$ for some $b_0 \in \mathbb{R}^n$, $A_0$ $n \times n$ symmetric
 positive semidefinite matrix and some $\sigma$-finite measure $\nu_0$ on $N$ such that $\int_N{(h^{\alpha}(z))^2\nu_0(dz)}<+\infty$ and
$\int_N{f(z)\nu_0(dz)}<+\infty $ for any smooth and bounded function $f \in \cinf(N)$ which is identically zero in a neighbourhood of $1_N$.
\end{definition}

It is evident that the definition of independent increments process depends on the filtration $\mathcal{F}_t$ used for defining the characteristics $(b,A,\nu)$. Furthermore since $(b,A,\nu)$ are deterministic the filtration $\mathcal{F}_t$ should always be canonical.

\begin{remark}
The characteristics of a  L\'evy process introduced in Definition \ref{definition_Levy} are the same as those discussed in Subsection \ref{subsubsection_Levy}.
Furthermore if $Z$ is a L\'evy process, then $Z$ is also an homogeneous Markov process. Its generator $L$ has the following form on $f \in
\cinf(N)$
$$
L(f)(z)=b_0^{\alpha}Y_{\alpha}(f)(z)+\frac{1}{2}A^{\alpha\beta}_0
Y_{\alpha}(Y_{\beta}(f))(z)+\int_N{(f(z' \cdot
z^{-1})-f(z)-h^{\alpha}(z')Y_{\alpha}(f)(z))\nu_0(dz')},
$$
\noindent for any $z\in N$.
\end{remark}

\begin{theorem}\label{theorem_characteristic3}
If a semimartingale $Z$ is an independent increments
process such that its law is uniquely determined by its
characteristics, then $Z$ admits $\mathcal{G}$ as  gauge symmetry group  with action $\Xi_g$ if and only if, for any $g \in
\mathcal{G}$,
\begin{eqnarray}
b^{\alpha}_t&=&\Upsilon^{\alpha}_{g,\beta}b_t^{\beta}+\frac{1}{2}O^{\alpha}_{g,\beta\gamma}A^{\beta\gamma}
+\int_0^t{\int_N{(h^{\alpha}(z')-h^{\beta}(\Xi_{g^{-1}}(z'))\Upsilon^{\alpha}_{g,\beta})\nu(ds,dz')}}\label{equation_characteristic5}\\
A^{\alpha\beta}_t&=&\Upsilon^{\alpha}_{g,\gamma}\Upsilon^{\beta}_{g,\delta}A^{\gamma\delta}_t\label{equation_characteristic6}\\
\nu&=&\Xi_{g*}(\nu).\label{equation_characteristic7}
\end{eqnarray}
\end{theorem}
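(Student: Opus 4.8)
The plan is to deduce the statement from Lemma \ref{lemma_characteristic1} together with the standing assumption that, for an independent increments process whose law is fixed by its characteristics, equality of characteristics is equivalent to equality of laws. Throughout I work on the canonical space $\Omega^c$ and exploit that here $(b,A,\nu)$ are deterministic, so that they do not depend on $\omega_A$; in particular the maps $\pi_A(\Lambda'(\omega))$ appearing in the general conditions \refeqn{equation_characteristic8}--\refeqn{equation_characteristic10} of Theorem \ref{theorem_characteristic2} become irrelevant and those conditions specialise to deterministic identities.

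First I would prove necessity. Assume $Z$ admits $\mathcal{G}$ with action $\Xi_g$ as a gauge symmetry group, fix $g \in \mathcal{G}$, and apply the definition to the constant predictable process $G_t \equiv g$: the semimartingale $\tilde{Z}$ solving $d\tilde{Z}_t=\Xi_{G_t}(dZ_t)$ then has the same law as $Z$. By Lemma \ref{lemma_characteristic1} its characteristics $(\tilde{b},\tilde{A},\tilde{\nu})$ are obtained from $(b,A,\nu)$ through the transformation formulas with $G_t$ replaced by the constant $g$; since $(b,A,\nu)$ are deterministic, so are $(\tilde{b},\tilde{A},\tilde{\nu})$, whence $\tilde{Z}$ is again an independent increments process. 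Because the law of $Z$ is uniquely determined by its characteristics and $\tilde{Z}$ shares that law, $Z$ and $\tilde{Z}$ have identical characteristics, i.e. $\tilde{b}=b$, $\tilde{A}=A$, $\tilde{\nu}=\nu$. Writing out these three equalities through Lemma \ref{lemma_characteristic1} and integrating the drift relation from $0$ to $t$ yields exactly \refeqn{equation_characteristic5}, \refeqn{equation_characteristic6} and \refeqn{equation_characteristic7}.

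For sufficiency, assume \refeqn{equation_characteristic5}--\refeqn{equation_characteristic7} hold for every $g \in \mathcal{G}$, and let $G_t$ be an arbitrary locally bounded predictable $\mathcal{G}$-valued process. The key observation is that these identities, being valid for each fixed group element, may be evaluated pointwise at $g = G_t(\omega)$. Substituting this choice into the characteristics of $\tilde{Z}$ solving $d\tilde{Z}_t=\Xi_{G_t}(dZ_t)$ supplied by Lemma \ref{lemma_characteristic1}, every occurrence of $G_t$ is absorbed and the transformation formulas collapse to $d\tilde{b}_t=db_t$, $d\tilde{A}_t=dA_t$ and $\tilde{\nu}=\nu$. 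Thus $\tilde{Z}$ has the same deterministic characteristics as $Z$, and the uniqueness hypothesis forces $\tilde{Z}$ to share its law. As $G_t$ was arbitrary, $Z$ admits $\mathcal{G}$ as a gauge symmetry group with action $\Xi_g$.

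The delicate step is the pointwise substitution $g = G_t(\omega)$ in the sufficiency part: one must check that replacing the fixed parameter $g$ in the deterministic relations by the predictable random process $G_t$ is legitimate, i.e. that the resulting (a priori random) characteristics of $\tilde{Z}$ genuinely reduce to the deterministic triplet $(b,A,\nu)$, and that predictability and local boundedness are preserved under $\Upsilon_{G_t}$, $O_{G_t}$ and $\Xi_{G_t^{-1}}$. The remainder is bookkeeping: matching the differential identities of Lemma \ref{lemma_characteristic1} with the integrated forms \refeqn{equation_characteristic5}--\refeqn{equation_characteristic7}, and invoking that equality of characteristics is equivalent to equality of laws precisely under the standing assumption that the law of $Z$ is determined by $(b,A,\nu)$.
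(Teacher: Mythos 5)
Your overall architecture matches the paper's: necessity via the constant process $G_t\equiv g$ together with the transformation formulas of Lemma \ref{lemma_characteristic1}, and sufficiency by upgrading the fixed-$g$ identities to arbitrary predictable $G_t$ and then invoking that the characteristics determine the law. However, there is a genuine logical gap in your necessity argument. You write that because the law of $Z$ is uniquely determined by its characteristics and $\tilde{Z}$ shares that law, $Z$ and $\tilde{Z}$ must have identical characteristics. That uses the hypothesis backwards: ``the law is uniquely determined by the characteristics'' is the implication (equal characteristics) $\Rightarrow$ (equal laws), whereas the step you need is (equal laws) $\Rightarrow$ (equal characteristics). The latter is true, but for a different reason: it is the uniqueness statement of Theorem \ref{theorem_characteristic1} combined with the canonical-space discussion of Subsection \ref{subsection_gauge_main} (characteristics with respect to a generalized natural filtration are $\mathcal{F}^A$-measurable objects determined by the law $\mathbb{P}^Z$), and this is precisely what the necessity part of Theorem \ref{theorem_characteristic2} packages. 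This is also where the filtration issue you glossed over lives: the paper's proof explicitly checks that, for a constant $g_0$, the relevant filtration remains generalized natural for $d\tilde{Z}_t=\Xi_{g_0}(dZ_t)$, so that the characteristics computed via Lemma \ref{lemma_characteristic1} are indeed the law-determined ones being compared.

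In the sufficiency direction you correctly identify the delicate point, namely substituting the random $g=G_t(\omega)$ into identities that hold, for each fixed $g$, only as identities of measures in $t$ up to a possibly $g$-dependent null set, but you do not resolve it. The paper's route is to verify \refeqn{equation_characteristic8}, \refeqn{equation_characteristic9} and \refeqn{equation_characteristic10} first for elementary (piecewise constant) processes $G_t$, where only finitely many fixed-$g$ identities are combined and the exceptional null sets can be united, and then to extend to general locally bounded predictable $G_t$ by a standard approximation, using the continuity of $\Upsilon_g$, $O_g$ and $\Xi_g$ in $g$; alternatively one could produce a common null set via separability of $\mathcal{G}$ and continuity in $g$. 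Without some such argument the pointwise substitution remains an assertion rather than a proof, so this step needs to be filled in; once it is, your proof coincides in substance with the paper's, with Lemma \ref{lemma_characteristic1} playing the role that Theorem \ref{theorem_characteristic2} plays there.
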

\begin{proof}
Let us consider the constant process $G_t=g_0$ for some $g_0 \in \mathcal{G}$. Since $\Xi_{g_0}$ is a diffeomorphism and since the constant
process $G_t=g_0$ is measurable with respect to both the natural filtrations of $Z_t$ and of $\tilde{Z}_t$, it is simple to prove that, if
$\tilde{\mathcal{F}}^c_t$ is a generalized natural
 filtration for $Z_t$, then it is a generalized natural filtration also for $d\tilde{Z}_t=\Xi_{g_0}(dZ_t)$. This fact implies that $\hat{\mathcal{F}}^c_t$ is a generalized natural filtration
for $\omega_A(t)$ with respect to the law $\mathbb{P}'$. For this reason since  $(b,A,\nu)$ and the process $G_t$
  do not depend on $\omega$, \refeqn{equation_characteristic5},
\refeqn{equation_characteristic6} and \refeqn{equation_characteristic7} follow from the necessary condition in Theorem \ref{theorem_characteristic1}.  \\
Conversely, if equations \refeqn{equation_characteristic5}, \refeqn{equation_characteristic6} and \refeqn{equation_characteristic7} hold, they
imply equations \refeqn{equation_characteristic8}, \refeqn{equation_characteristic9} and \refeqn{equation_characteristic10} to any elementary
process $G_t$. Using standard techniques we can extend \refeqn{equation_characteristic8}, \refeqn{equation_characteristic9}
and \refeqn{equation_characteristic10} for any locally bounded predictable process $G_t$.\\
Since the law of $Z$ is uniquely determined by its characteristics, the thesis follows  by the sufficient condition in Theorem
\ref{theorem_characteristic2}.
\hfill\end{proof}

\begin{remark}
It is important to recall that  the law of an independent increments semimartingale
 on the Lie group $N=\mathbb{R}^n$ is always
uniquely determined by its characteristics (see, e.g., \cite{Jacod2003},
Chapter II, Theorem 4.15 and the corresponding comments in that reference).
\end{remark}

We now  propose a general method for explicitly  constructing
L\'evy processes admitting  a gauge symmetry group $\mathcal{G}$ with action $\Xi_g$.\\
In order to show that our construction is a generalization of the Brownian motion case, we begin with  a standard example. Consider $N=\mathbb{R}^{n}$ and the L\'evy process with  generator given by
$$L(f)(z)=\sum_{\alpha=1}^n\frac{D}{2}\partial_{z^{\alpha}z^{\alpha}}(f)(z)+\int_{N}{(f(z+z')-f(z)-I_{|z'|<1}(z')z^{\alpha}\partial_{z^{\alpha}}(f))F(|z'|)dz'},$$
where $D \in \mathbb{R}_+$, $|\cdot |$ is the standard norm of $\mathbb{R}^n$ and $F:\mathbb{R}_+  \rightarrow \mathbb{R}_+$ is a measurable
locally bounded function such that $\int_1^{\infty}{F(r)r^{n-1}dr} < + \infty$ and $\int_0^1{F(r)r^{n+1}} < +\infty$. When $B \in O(n)$  we have
$$\Xi_{B}(z)=B \cdot z.$$
By definition, $B$ respects the standard metric in $\mathbb{R}^n$ and so
$$\Xi_{B*}(F(|z|)dz)=\det(B)F(|B^T \cdot z|)dz=F(|z|)dz.$$
Furthermore, since $\Upsilon^{\alpha}_{B,\beta}=B^{\alpha}_{\beta}$ we have
\begin{eqnarray*}
&\int_N{(z^{\alpha}I_{|z|<1}(z)-\Xi^{\beta}_{B^{-1}}(z){\Upsilon}^{\alpha}_{B,\beta}I_{|\Xi_{B^{-1}}(\cdot)|<1}(z))F(|z|)dz}=&\\
&=\int_N{(z^{\alpha}I_{|z|<1}(z)-(B^{-1})^{\beta}_{\gamma}B^{\alpha}_{\beta}I_{|z|<1}(z)z^{\gamma})F(|z|)dz}=0.&
\end{eqnarray*}
Hence, by Theorem \ref{theorem_characteristic2}, $Z$ admits $O(n)$ as a gauge symmetry group with action $\Xi_B$.\\
In this case  the equation $dZ_t'=\Xi_{B_t}(dZ_t)$ is simply
$$Z'^{\alpha}_t=\int_0^t{B^{\alpha}_{\beta,s} dZ^{\beta}_s}.$$
This example can be easily generalized to the case of a group $\mathcal{G} \subset O(n)$ which is a strict subgroup of $O(n)$ with a faithful
action. Indeed in this case we can consider the   polynomial $k_1(z),...,k_l(z)$ as $\mathcal{G}$-invariant with respect to the action $\Xi_{B}$,
where $B \in \mathcal{G}$. If $G:\mathbb{R}^l \rightarrow \mathbb{R}$ is a non-negative smooth function such that $\partial_{y^i}(G) \not =0$
for $i=1,...,l$ and  $F$ is a measurable, locally bounded function satisfying the previous conditions,  then $\nu_G(dz)=F(|z|)G(k_1(z),...,k_l(z))dz$ is a L\'evy measure strictly invariant
with
respect to $\mathcal{G}$. So the L\'evy process with measure $\nu_G$  admits $\mathcal{G}$, but not all $O(n)$, as a  gauge symmetry group. \\

In order to extend the above construction to a general Lie group $N$, we introduce a special set of Hunt functions. Let $Y_1,...,Y_n$ be a
basis of right-invariant vector fields and consider $a^1,...,a^n \in \mathbb{R}$. It is possible to define the exponential $\exp(a^{\alpha}
Y_{\alpha}) \in N$, which is a point in $N$ defined as the evolution at time $1$ of $1_N$ with respect to the vector field
$a^{\alpha}Y_{\alpha}$. The map $\exp:\mathbb{R}^n \rightarrow N$ is a local diffeomorphism, so there exist a neighbourhood $U$ of $1_N$ and $n$
smooth functions $\hat{h}^1,...,\hat{h}^n$ such that, for any $z \in U$
\begin{equation}\label{equation_Levy1}
\exp(\hat{h}^{\alpha}(z)Y_{\alpha})=z.
\end{equation}
From equation \refeqn{equation_Levy1} and the implicit function theorem we deduce that $\hat{h}^{\alpha}$ are smooth and form a set of Hunt functions.\\
We introduce a special class of Lie group actions $\Xi_g$ on $N$. Suppose that $\Xi_g$ is a Lie group action of endomorphisms of $N$, which means
that, for any $z,z' \in N$,  $\Xi_g(z \cdot z')=\Xi_g(z) \cdot \Xi_g(z')$. Since  the derivative map $T\Xi_g$ is an automorphism of the Lie algebra
$\mathfrak{g}$ of right-invariant vector fields,  there are some functions $\Upsilon^{\alpha}_{g,\beta}$ from $\mathcal{G}$ into $\mathbb{R}$
such that
\begin{equation}\label{equation_Levy2}
T\Xi_g(Y_{\alpha})=\Upsilon^{\beta}_{g,\alpha} Y_{\beta}.
\end{equation}
We remark that the previous equality holds in all $N$, and not only at $1_N$ as it was the case for general group actions. Moreover,
in this case, since equality \refeqn{equation_Levy1} holds in all
$N$, the map $O_g$ associated with $\Xi_g$ is identically equal to
$0$.

\begin{lemma}\label{lemma_gauge1}
There exists a small enough neighbourhood $U$ of $1_N$ such that,
for any $y \in U$,
$$\Upsilon^{\beta}_{g,\alpha}\hat{h}^{\alpha}(\Xi_{g^{-1}}(z))=\hat{h}^{\beta}(z).$$
\end{lemma}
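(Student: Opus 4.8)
The plan is to reduce the identity to an intertwining relation between the action $\Xi_g$ and the exponential map, namely
$$\Xi_g(\exp(v^{\alpha}Y_{\alpha}))=\exp(\Upsilon^{\beta}_{g,\alpha}v^{\alpha}Y_{\beta}),$$
valid for $v$ in a neighbourhood of $0$ in $\mathbb{R}^n$. Once this is available, the lemma follows by a short computation: for $z$ close to $1_N$ set $w=\Xi_{g^{-1}}(z)$, which is again close to $1_N$ since $\Xi_{g^{-1}}$ is continuous and fixes $1_N$; by \refeqn{equation_Levy1} we have $w=\exp(\hat{h}^{\alpha}(w)Y_{\alpha})$, and applying $\Xi_g$ together with the intertwining relation and the fact that $\Xi_g\circ\Xi_{g^{-1}}=\operatorname{id}$ gives $z=\exp(\Upsilon^{\beta}_{g,\alpha}\hat{h}^{\alpha}(w)Y_{\beta})$. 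Comparing this with $z=\exp(\hat{h}^{\beta}(z)Y_{\beta})$ and invoking the local injectivity of $\exp$ yields $\hat{h}^{\beta}(z)=\Upsilon^{\beta}_{g,\alpha}\hat{h}^{\alpha}(\Xi_{g^{-1}}(z))$, which is the claim.

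To establish the intertwining relation I would argue at the level of flows. Fix $v\in\mathbb{R}^n$ and let $X=v^{\alpha}Y_{\alpha}$; by definition $\exp(v^{\alpha}Y_{\alpha})$ is the value at time $1$ of the integral curve of the right-invariant field $X$ issued from $1_N$. Consider the curve $t\mapsto\Xi_g(\phi^X_t(1_N))$, where $\phi^X$ denotes the flow of $X$. It starts at $\Xi_g(1_N)=1_N$, and upon differentiating, using that $\Xi_g$ is an endomorphism so that $\Xi_g\circ R_w=R_{\Xi_g(w)}\circ\Xi_g$ (with $R$ right translation), one checks that $X$ and $T\Xi_g(X)$ are $\Xi_g$-related; hence this curve is precisely the integral curve of $T\Xi_g(X)$ issued from $1_N$. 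Since \refeqn{equation_Levy2} holds on all of $N$, we have $T\Xi_g(X)=\Upsilon^{\beta}_{g,\alpha}v^{\alpha}Y_{\beta}$, and evaluating at time $1$ gives the desired identity.

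The main point requiring care is the choice of the neighbourhood $U$. We need $U$ small enough that $\hat{h}$ is defined on $U$, that $\Xi_{g^{-1}}(U)$ is contained in the domain of $\hat{h}$, and that the vector $\Upsilon^{\beta}_{g,\alpha}\hat{h}^{\alpha}(\Xi_{g^{-1}}(z))$ lands in the region where $\exp$ is a diffeomorphism, so that the final comparison of exponential coordinates is legitimate. All three conditions can be met simultaneously by shrinking $U$, because $\Xi_{g^{-1}}$ and $\hat{h}$ are continuous and fix the relevant base points ($1_N$ and $0$ respectively) while $\Upsilon_g$ is a fixed linear map; this is the only genuinely delicate step, the rest being the flow computation above. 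Note that $g$ is held fixed throughout, so $U$ is allowed to depend on $g$.
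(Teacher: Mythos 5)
Your proof is correct and follows essentially the same route as the paper's: both establish the intertwining identity $\Xi_g(\exp(v^{\alpha}Y_{\alpha}))=\exp(\Upsilon^{\beta}_{g,\alpha}v^{\alpha}Y_{\beta})$ by pushing forward the integral curve of $v^{\alpha}Y_{\alpha}$ under the automorphism $\Xi_g$ (using that \refeqn{equation_Levy2} holds on all of $N$, not just at $1_N$), and then conclude by uniqueness of solutions to \refeqn{equation_Levy1} on a sufficiently small neighbourhood of $1_N$. Your explicit verification of the $\Xi_g$-relatedness via right translations and your discussion of how to shrink $U$ are slightly more careful renderings of steps the paper treats tersely, but the substance is identical.
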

\begin{proof}
Write
$$f(a^1,...,a^n,z)=\exp(a^{\alpha}Y_{\alpha})(z).$$
Since $f(a,x)$, where $a \in \mathbb{R}^n$, is the flow at time $1$ of $a^{\alpha}Y_{\alpha}$, $\Xi_g(f(a,\Xi_{g^{-1}}(z)))$ is the flow at time $1$ of
$\Xi_{g,*}(a^{\alpha}Y_{\alpha})$. Moreover, the fact that $\Xi_g$ is an automorphism of $N$ ensures that
$$\Xi_{g,*}(a^{\alpha}Y_{\alpha})=a^{\alpha}\Xi_{g,*}(Y_{\alpha})=a^{\alpha}\Upsilon_{g,\alpha}^{\beta}Y_{\beta} $$
which means
$$\Xi_g(f(a,\Xi_{g^{-1}}(z)))=f(a^{\alpha}\Upsilon_{g,\alpha}^{\beta},z).$$
Since the $\hat{h}^{\alpha}$ solve equation \refeqn{equation_Levy1}, the $\hat{h}^{\alpha}(\Xi_{g^{-1}}(z))$ solve the equation
$$\Xi_g(f(\hat{h}^{\alpha}(\Xi_{g^{-1}}(z)),\Xi_{g^{-1}}(z)))=z.$$
Using the properties of $f$, from the previous equation follows that the $\hat{h}^{\alpha}(\Xi_{g^{-1}}(z))\Upsilon_{g,\alpha}^{\beta}$ solve equation \refeqn{equation_Levy1}. If we choose the neighbourhood $U$  small enough, by uniqueness of the solutions to equation \refeqn{equation_Levy1}, we have  $\hat{h}^{\beta}(z)=\hat{h}^{\alpha}(\Xi_{g^{-1}}(z))\Upsilon^{\beta}_{g,\alpha}$.
\hfill\end{proof}\\

Suppose that there exists a complete symmetric positive definite
matrix $K^{\alpha\beta}$ such that
\begin{equation}\label{equation_Levy3bis}
\Upsilon^{\gamma}_{g,\alpha} K^{\alpha\beta}
\Upsilon_{g,\beta}^{\delta}=K^{\gamma\delta}
\end{equation}
for any $g \in \mathcal{G}$ and define
$$U_R=\{\exp(a^{\alpha}Y_{\alpha})|a^{\alpha}K_{\alpha\beta}a^{\beta}<R^2\},$$
where $K_{\alpha\beta}$ is the inverse matrix of $K^{\alpha\beta}$. It is simple to verify that the closure of $U_R$ is a compact set.
A consequence of Lemma \ref{lemma_gauge1} is that, for $R$ small enough and for any $g \in \mathcal{G}$, we have $\Xi_g(U_R)=U_R$. \\

An automorphism $\Xi_g$ and a right-invariant metric $K$ which satisfy equation \refeqn{equation_Levy3bis} exist for a large class of Lie groups.
Indeed the set of endomorphisms of a Lie group $N$, which we denote by $Aut(N)$, forms a Lie group itself and we can consider $\mathcal{G}$ as a
maximal compact subgroup of $Aut(N)$.  Since the representation $\Upsilon_g$ of $\mathcal{G}$ is the representation of a compact subgroup in the Lie
algebra $\mathfrak{n}$ of $N$,  there  exists a metric $K$ on $\mathfrak{n}$
such that $\mathcal{G}$ is a subgroup of $O(\mathfrak{n})$ with respect to $K$.\\

\begin{corollary}\label{corollary_characteristic1}
If $(b_0t,A_0t,\nu_0dt)$ are the characteristics of a semimartingale $Z$ with respect to the Hunt functions $\hat{h}^{\alpha}$ and $\mathcal{G}$ is a
subgroup of $Aut(N)$ with an action satisfying the previous hypothesis, then $\mathcal{G}$ is a gauge symmetry group of $Z$ if and only if
\begin{eqnarray*}
b^{\alpha}_0&=&b_0^{\beta}\Upsilon_{g,\beta}^{\alpha}\\
A^{\alpha\beta}_0&=&\Upsilon_{g,\gamma}^{\alpha}A^{\gamma\delta}_0\Upsilon_{g,\delta}^{\beta}\\
\nu_0&=&\Xi_{g*}(\nu_0).
\end{eqnarray*}
\end{corollary}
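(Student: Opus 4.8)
The plan is to read the Corollary as the specialization of Theorem \ref{theorem_characteristic3} to the case in which the Hunt functions are the $\hat{h}^{\alpha}$ defined by \refeqn{equation_Levy1} and the action $\Xi_g$ consists of automorphisms of $N$. First I would observe that a semimartingale whose characteristics are the deterministic triplet $(b_0t,A_0t,\nu_0dt)$ is a L\'evy process, hence an independent increments process, and that its law is uniquely determined by its characteristics: for $N=\mathbb{R}^n$ this is the content of the Remark following Theorem \ref{theorem_characteristic3}, and in general it follows from the homogeneous Markov structure and the explicit generator $L$ recalled there. Consequently Theorem \ref{theorem_characteristic3} is applicable, and it suffices to prove that, under the present hypotheses, conditions \refeqn{equation_characteristic5}, \refeqn{equation_characteristic6} and \refeqn{equation_characteristic7} are equivalent to the three stated identities.

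Two of the three reductions are immediate. Since $\nu=\nu_0\,dt$ and $\Xi_g$ acts only on the spatial variable, \refeqn{equation_characteristic7} is literally $\nu_0=\Xi_{g*}(\nu_0)$; and inserting $A_t=A_0t$ into \refeqn{equation_characteristic6} and cancelling the common factor $t$ yields $A_0^{\alpha\beta}=\Upsilon^{\alpha}_{g,\gamma}\Upsilon^{\beta}_{g,\delta}A_0^{\gamma\delta}$. The work is concentrated in \refeqn{equation_characteristic5}. Substituting $b_t=b_0t$ and $\nu=\nu_0\,dt$ and dividing by $t$, the drift condition becomes $b_0^{\alpha}=\Upsilon^{\alpha}_{g,\beta}b_0^{\beta}+\frac{1}{2}O^{\alpha}_{g,\beta\gamma}A_0^{\beta\gamma}+I^{\alpha}(g)$, where I write $I^{\alpha}(g):=\int_N(\hat{h}^{\alpha}(z')-\Upsilon^{\alpha}_{g,\beta}\hat{h}^{\beta}(\Xi_{g^{-1}}(z')))\nu_0(dz')$. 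Because $\Xi_g$ is an automorphism, the discussion preceding the Corollary gives $O_g\equiv 0$, so the second term drops out, and the whole reduction of the drift condition rests on showing that the jump integral $I^{\alpha}(g)$ vanishes; once this is done, \refeqn{equation_characteristic5} collapses exactly to $b_0^{\alpha}=\Upsilon^{\alpha}_{g,\beta}b_0^{\beta}$, which is the stated drift identity.

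The vanishing of $I^{\alpha}(g)$ is the step I expect to be the real obstacle, since Lemma \ref{lemma_gauge1} supplies the identity $\Upsilon^{\alpha}_{g,\beta}\hat{h}^{\beta}(\Xi_{g^{-1}}(z'))=\hat{h}^{\alpha}(z')$ only \emph{locally}, on the neighbourhood where $\exp$ is a diffeomorphism, so the global equality $\hat{h}^{\alpha}\circ\Xi_g=\Upsilon^{\alpha}_{g,\beta}\hat{h}^{\beta}$ is not available and the integrand cannot be killed by the Lemma alone. The way around this is to take the Hunt functions $\hat{h}^{\alpha}$ supported in the relatively compact neighbourhood $U_R$ which, by the consequence of Lemma \ref{lemma_gauge1} recorded after \refeqn{equation_Levy3bis}, satisfies $\Xi_g(U_R)=U_R$ for every $g\in\mathcal{G}$. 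Then the integrand of $I^{\alpha}(g)$ is pointwise zero: for $z'\in U_R$ one also has $\Xi_{g^{-1}}(z')\in U_R$, so the local Lemma applies and the two terms cancel; for $z'\notin U_R$ both $\hat{h}^{\alpha}(z')$ and $\hat{h}^{\beta}(\Xi_{g^{-1}}(z'))$ vanish, since $\hat{h}$ is supported in $U_R$ and $\Xi_{g^{-1}}(z')\notin U_R$. Hence $I^{\alpha}(g)=0$ identically, which completes the equivalence of the three conditions. With the three conditions matched, both implications of the Corollary follow directly from Theorem \ref{theorem_characteristic3}, its necessary direction giving the \virgolette{only if} part and its sufficient direction the \virgolette{if} part, the latter using that the L\'evy law is pinned down by $(b_0,A_0,\nu_0)$.
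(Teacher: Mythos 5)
Your proposal is correct and follows essentially the same route as the paper's own (much terser) proof: specialize Theorem \ref{theorem_characteristic3}, use that $\Xi_g \in Aut(N)$ forces $O_g\equiv 0$, and reduce everything to the vanishing of the jump integral, which the paper attributes to Lemma \ref{lemma_gauge1} and which you justify exactly as intended, via Hunt functions supported in the $\mathcal{G}$-invariant neighbourhood $U_R$ so that the integrand cancels pointwise inside $U_R$ and is identically zero outside. Your spelled-out treatment of the locality of Lemma \ref{lemma_gauge1} and of the unique-determination hypothesis of Theorem \ref{theorem_characteristic3} merely makes explicit what the paper leaves implicit.
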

\begin{proof}
Since $O_{g,\beta\gamma}^{\alpha}=0$ the only thing to prove is
that
$$\int_N{(\hat{h}^{\alpha}(z')-\Upsilon^{\alpha}_{g,\beta}\hat{h}^{\beta}(\Xi_{g^{-1}}(z')))\nu_0(dz')=0}.$$
But the last equality follows easily from Lemma \ref{lemma_gauge1}. ${}\hfill$ \end{proof}

\begin{remark}
Although all Lie groups $\mathcal{G}$
constructed with the previous method are compact,   not
 all gauge symmetry groups of a L\'evy process are
compact. For example, using Hamiltonian actions on
$\mathbb{R}^n$, it is possible to construct L\'evy processes with gauge
symmetry group $\mathcal{G}=\mathbb{R}^l$.
\end{remark}

\begin{remark}
The construction proposed here for general Lie groups is equivalent to the one considered in \cite{Albeverio2007} for L\'evy processes taking values in the matrix Lie groups.
\end{remark}

\subsection{Gauge symmetries of non-Markovian processes}

In this section we propose a method for the explicit construction of non-Markovian semimartingales admitting  gauge symmetries. We remark that the class of semimartingales obtained in this way does not exhaust  all the possible  non-Markovian semimartingales with gauge symmetries. \\
The main idea of our construction consists in generalizing the following fact: given  three
independent Brownian motions $W^0,W^1,W^2$,  the non-Markovian
process on $\mathbb{R}^2$ defined by the equations
\begin{eqnarray*}
\tilde{W}^1_t&=&\int_0^t{G(W^0_{[0,s]},s)dW^1_s}\\
\tilde{W}^2_t&=&\int_0^t{G(W^0_{[0,s]},s)dW^2_s},
\end{eqnarray*}
where $G$ is a continuous predictable functional on $C^0(\mathbb{R}_+)$, admits the gauge symmetry group  $SO(2)$ of two dimensional rotations. Indeed, if $B_s=(B^{\alpha}_{\beta,s}), s\ge 0,$ is a predictable
process taking values in $SO(2)$, the process $(\hat{W}^1,\hat{W}^2)$ defined by
$\hat{W}^{\alpha}_t=\int_0^t{B^{\alpha}_{\beta,s}d\tilde{W}^{\beta}_s}, t\ge 0,$ has the same law as $(\tilde{W}^1,\tilde{W}^2)$. In fact, if we put
$W'^{\alpha}_t=\int_0^t{B^{\alpha}_{\beta,s}dW^{\beta}_s}$, it is easy to prove that
$$\hat{W}^{\alpha}_t=\int_0^t{G(W^0_{[0,s]},s) dW'^{\alpha}_s}.$$
Since $[W'^{\alpha},W^0]_t=0$, and since $W'^{\alpha}$ is a Brownian motion, $W^0,W'^1,W'^2$ are all independent Brownian motions.
Since $\hat{W}^1,\hat{W}^2$ are the integrals with respect to two independent Brownian motions of a function of a third independent
Brownian motion $W^0$, we know that $\hat{W}^1$ and $\hat{W}^2$ have the same law as $\tilde{W}^1,\tilde{W}^2$.\\

Working in a more general setting, we consider the Lie group $N=N_1 \times N_2$, where $N_1,N_2$ are two Lie groups and the multiplication on $N$ is defined by
$$(z_1,z_2) \cdot (z'_1,z'_2)= (z_1 \cdot_1 z'_1, z_2 \cdot_2 z_2),$$
where   $\cdot_1,\cdot_2$ denote the multiplication on
$N_1,N_2$, respectively.
Moreover, we introduce the space $\Omega_A=\Omega_A^1 \times \Omega_A^2$, where $\Omega_A^i=\mathcal{D}_{1_{N_i}}([0, + \infty),N_i)$,
and we denote by $\omega_A^1,\omega^2_A$ the elements of $\Omega_A^1,\Omega_A^2$, respectively.\\

\begin{theorem}\label{theorem_characteristic4}
Consider $\Xi_g=(\Xi^1_g,id_{N_2})$ and suppose that the characteristics of a semimartingale $Z$ in $N$ depend only on $\omega_A^2$. If the
semimartingale $Z$ admits the Lie group $\mathcal{G}$ with action $\Xi_g$ as a gauge symmetry group then, for any $g \in \mathcal{G}$,
 \small
\begin{eqnarray}
b^{\alpha}_t(\omega^2_A)&=&\Upsilon^{\alpha}_{g,\beta}b_t^{\beta}(\omega^2_A)+\frac{1}{2}O^{\alpha}_{g,\beta\gamma}A^{\beta\gamma}(\omega^2_A)+
\int_0^t{\int_N{(h^{\alpha}(z')-h^{\beta}(\Xi_{g^{-1}}(z'))\Upsilon^{\alpha}_{g,\beta})\nu(\omega^2_A,ds,dz')}}\label{equation_characteristic11}\\
A^{\alpha\beta}_t(\omega^2_A)&=&\Upsilon^{\alpha}_{g,\gamma}\Upsilon^{\beta}_{g,\delta}A^{\gamma\delta}_t(\omega^2_A)\label{equation_characteristic12}\\
\nu(\omega^2_A,dt,dz)&=&\Xi_{g*}(\nu(\omega^2_A,dt,dz)).\label{equation_characteristic13}
\end{eqnarray}
\normalsize Moreover, if the triplet $(b,A,\nu)$ uniquely determines  the law of $Z$ on $\Omega_A$, then equations \refeqn{equation_characteristic11},
\refeqn{equation_characteristic12} and \refeqn{equation_characteristic13} provide  a sufficient condition too.
\end{theorem}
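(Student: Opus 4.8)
The plan is to reduce both implications to the general transformation machinery already established, exploiting the single structural feature that $\Xi_g=(\Xi^1_g,id_{N_2})$ leaves the $N_2$-component of the driving process unchanged. Concretely, writing $Z=(Z^1,Z^2)$ and $\tilde{Z}=(\tilde{Z}^1,\tilde{Z}^2)$ according to $N=N_1\times N_2$, the second block of $d\tilde{Z}_t=\Xi_{G_t}(dZ_t)$ reads $d\tilde{Z}^2_t=id_{N_2}(dZ^2_t)$, so that $\tilde{Z}^2=Z^2$ identically. In the canonical-space language of Subsection \ref{subsection_gauge_main} this says the $\mathbb{P}'$-inverse map $\Lambda'$ acts trivially on the $\Omega^2_A$ factor, i.e.\ $\pi^2_A(\Lambda'(\omega))=\omega^2_A$ up to a $\mathbb{P}'$ null set. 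Since by hypothesis $(b,A,\nu)$ depend only on $\omega^2_A$, this yields $b(\pi_A(\Lambda'(\omega)))=b(\omega^2_A)$ and the analogous identities for $A$ and $\nu$; this is the one place where the restricted dependence of the characteristics is used, and it is what distinguishes the present statement from Theorem \ref{theorem_characteristic3}.

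For the necessity direction I would fix $g\in\mathcal{G}$ and take the constant process $G_t=g$, arguing as in the proof of Theorem \ref{theorem_characteristic3} that $\hat{\mathcal{F}}^c_t$ is then a generalized natural filtration under $\mathbb{P}'$, so that Theorem \ref{theorem_characteristic2} applies and delivers \refeqn{equation_characteristic8}, \refeqn{equation_characteristic9} and \refeqn{equation_characteristic10}. Substituting $b(\pi_A(\Lambda'(\omega)))=b(\omega^2_A)$ (and likewise for $A$, $\nu$) into the right-hand sides of these three relations collapses them, for this constant $g$, into exactly \refeqn{equation_characteristic11}, \refeqn{equation_characteristic12} and \refeqn{equation_characteristic13}. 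Letting $g$ range over $\mathcal{G}$ gives the stated identities for every $g$; because the triplet is $\pi^{-1}_A(\mathcal{F}^A)$-measurable, the equalities descend from $\mathbb{P}'$ null sets to $\mathbb{P}^Z$ null sets, as in the formulation of Theorem \ref{theorem_characteristic2}.

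For the sufficiency direction, assume \refeqn{equation_characteristic11}--\refeqn{equation_characteristic13} and that $(b,A,\nu)$ uniquely determines the law of $Z$ on $\Omega_A$. Fix an arbitrary locally bounded predictable $G_t$ valued in $\mathcal{G}$ and let $\tilde{Z}$ solve $d\tilde{Z}_t=\Xi_{G_t}(dZ_t)$. By Lemma \ref{lemma_characteristic1} the characteristic triplet $(\tilde{b},\tilde{A},\tilde{\nu})$ of $\tilde{Z}$ is given by the transformation formulas with $g=G_t$. Applying \refeqn{equation_characteristic11}--\refeqn{equation_characteristic13} pointwise with $g=G_t(\omega)$ — legitimate since they hold for every $g\in\mathcal{G}$ — shows that each formula returns the untransformed quantity, so $(\tilde{b},\tilde{A},\tilde{\nu})=(b,A,\nu)$ as functionals of $\omega^2_A$. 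Since $\tilde{Z}^2=Z^2$, this common $\omega^2_A$ is equally the $N_2$-component of the path of $\tilde{Z}$, whence $\tilde{Z}$ carries the same characteristic functionals as $Z$ read off its own path; the uniqueness-of-law hypothesis then forces $\tilde{Z}\overset{d}{=}Z$, and as $G_t$ was arbitrary this is precisely Definition \ref{definition_gauge}.

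The step I expect to be the main obstacle is securing, in the sufficiency direction, that the triplet produced by Lemma \ref{lemma_characteristic1} is genuinely the characteristic triplet of $\tilde{Z}$ \emph{as a functional of its own path}, rather than merely a re-expression in terms of the path of $Z$. It is exactly the identity $\tilde{Z}^2=Z^2$ together with the restricted dependence of $(b,A,\nu)$ on $\omega^2_A$ that reconciles these two viewpoints, and once this identification is made the uniqueness of the law given the characteristics closes the argument at once.
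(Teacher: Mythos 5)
Your proposal is correct and follows essentially the same route as the paper: the key observation that $\Xi_g=(\Xi^1_g,id_{N_2})$ forces $\Lambda'$ to act as the identity on the $\Omega^2_A$ (and $\Omega_B$) factors, so that the restricted dependence of $(b,A,\nu)$ on $\omega^2_A$ collapses the general relations of Theorem \ref{theorem_characteristic2}; necessity via the constant process $G_t=g_0$, and sufficiency by mirroring the proof of Theorem \ref{theorem_characteristic3} together with the uniqueness-of-law hypothesis. Your closing remark about identifying the characteristics of $\tilde{Z}$ as functionals of its own path via $\tilde{Z}^2=Z^2$ makes explicit precisely the point the paper compresses into the displayed form of $\Lambda'$.
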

\begin{proof}
The proof is based on Theorem \ref{theorem_characteristic2}
and on the fact that the map $\Lambda'$ appearing in  Theorem \ref{theorem_characteristic2}  has here the form
$$\Lambda'=\left(\begin{array}{c}
\Lambda'^1_A\\
id_{\Omega_A^2}\\
id_{\Omega_B}\end{array} \right).$$
In particular, for proving the necessity it is enough to consider the constant process $G_t=g_0$ and apply Theorem \ref{theorem_characteristic2}.\\
The proof of the sufficiency of equations
\refeqn{equation_characteristic11},
\refeqn{equation_characteristic12} and
\refeqn{equation_characteristic13} is  similar to
the proof of Theorem \ref{theorem_characteristic3}.
\hfill\end{proof}\\

Let us apply Theorem \ref{theorem_characteristic4} to the  example described at the beginning of this subsection. In this case $(\tilde{W}^1,\tilde{W}^2,W^0)$, as a semimartingale
on $\mathbb{R}^3$,  has characteristics
\begin{eqnarray*}
db_t&=&0\\
dA_t&=&\left(\begin{array}{ccc}
(G(W^0_{[0,t]},t))^2dt & 0 & 0 \\
0 & (G(W^0_{[0,t]},t))^2dt & 0 \\
0 & 0 & dt
\end{array} \right)\\
\nu&=&0,
\end{eqnarray*}
where the Hunt functions can be chosen arbitrarily.\\
Here $\Xi^1_B(z)=B \cdot z$, where $B \in SO(2)$ and so $\Upsilon_B=\left(\begin{array}{cc}B & 0\\
0 &0 \end{array} \right)$ and $O_B=0$. It is easy to prove that
$\Upsilon_B \cdot b=0=b$, $\Upsilon_B \cdot A \cdot \Upsilon_B^T=A$ and $\Xi_g(\nu)=0=\nu$.
For a suitable choice of $G$, for example by choosing $G$ Lipschitz with respect to the natural seminorms of $C^0(\mathbb{R}_+)$,
the triplet $(b,A,\nu)$  uniquely determines the law of $(\tilde{W}^1,\tilde{W}^2,W^0)$ and, therefore, we can apply Theorem \ref{theorem_characteristic4}.\\

All the results of  Subsection \ref{subsection_gauge_Levy} can be generalized in many ways which still permit to apply Theorem
\ref{theorem_characteristic4},  obtaining thus other examples of non-Markovian semimartingales with gauge symmetries.

\section{Time symmetries}\label{section_time}

In this section we briefly discuss the time symmetries of a semimartingale on a Lie group. After recalling some properties of the absolutely
continuous time change, we introduce the definition
of time symmetry of a semimartingale, and  we prove some results analogous to those holding for  gauge symmetries. \\
Finally we study time symmetries of L\'evy processes,  constructing some explicit examples of L\'evy processes with non-trivial
time symmetry. Our construction mainly follows \cite{Kunita1994,Kunita1994(1)}.\\

\subsection{Time symmetries of semimartingales}

Given  a positive adapted stochastic process  $\beta$ such that, for
any $\omega \in \Omega$, the function $\beta(\omega):t \mapsto
\beta_t(\omega)$ is absolutely continuous with strictly positive
locally bounded derivative, we define
$$\alpha_t=\inf\{s|\beta_s>t\},$$
where, as usual, by convention $\inf(\mathbb{R}_+)=+\infty$. The process
$\alpha$ is an adapted process such that
$$\beta_{\alpha_t}=\alpha_{\beta_t}=t.$$
If $X$ is a stochastic process adapted to the filtration $\mathcal{F}_t$, we denote by $H_{\beta}(X)$ the stochastic process adapted to the
filtration $\mathcal{F}'_t=\mathcal{F}_{\alpha_t}$ such that
$$H_{\beta}(X)_t=X_{\alpha_t}.$$

Since, by assumption,
$\beta_t$ is absolutely continuous and strictly increasing, then also $\alpha_t$ is absolutely continuous and strictly increasing.  Furthermore,
 denoting by $\alpha'_t$ respectively $\beta'_t$ the time derivative of $\alpha_t$ respectively $\beta_t$,  we have
$$\alpha'_t=\frac{1}{\beta'_{\alpha_t}}.$$
If $\mu$ is a random measure on $N$ adapted to the filtration $\mathcal{F}_t$, we can introduce a time changed random measure $H_{\beta}(\mu)$
adapted to the filtration $\mathcal{F}'_t$ such that, for any Borel set $E \subset N$,
$$H_{\beta}(\mu)([0,t] \times E)=\mu([0,\alpha_t] \times E).$$
In order to introduce a good  concept of symmetry with respect to time transformations, we have to recall some fundamental properties of absolutely
continuous random time changes    with a locally bounded derivative.

\begin{theorem}\label{theorem_time1}
Let $\beta_t$ be the process described above and let $Z, Z'$ be two real semimartingales, $K_t$ be a predictable process which is integrable with
respect to $Z$ and  $\mu$ be a random measure. Then
\begin{enumerate}
\item $H_{\beta}(Z)$ is a semimartingale,
\item if $Z$ is a local $\mathcal{F}_t$-martingale, then $H_{\beta}(Z)$ is a local $\mathcal{F}'_t$-martingale,
\item $H_{\beta}([Z,Z'])=[H_{\beta}(Z),H_{\beta}(Z')]$
\item $H_{\beta}(K)$ is integrable with respect to $H_{\beta}(Z)$ and $\int_0^{\alpha_t}{K_sdZ_s}=\int_0^t{H_{\beta}(K)_s dH_{\beta}(Z)_s}.$
\item if $\mu^p$ is the compensator of $\mu$, then $H_{\beta}(\mu^p)$ is the compensator of $H_{\beta}(\mu)$.
\end{enumerate}
\end{theorem}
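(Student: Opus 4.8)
The plan is to treat the five assertions as a single cascade, reducing everything to how local martingales behave under the continuous time change, since the finite-variation parts, brackets, stochastic integrals and compensators are all characterised through the martingale property. First I would record the preliminaries on which the whole argument rests. Because $\beta$ is continuous, strictly increasing and adapted, its inverse $\alpha$ is continuous, strictly increasing and adapted, and for each fixed $t$ the random variable $\alpha_t$ is an $\mathcal{F}_t$-stopping time. Hence $\mathcal{F}'_t=\mathcal{F}_{\alpha_t}$ is a genuine filtration satisfying the usual conditions, and for any adapted c\`adl\`ag process $Z$ the value $Z_{\alpha_t}$ is $\mathcal{F}_{\alpha_t}=\mathcal{F}'_t$-measurable; the continuity of $\alpha$ then makes $H_{\beta}(Z)$ itself c\`adl\`ag and $\mathcal{F}'_t$-adapted. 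This disposes of all the measurability and path-regularity bookkeeping at once.

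For the second statement I would argue by optional sampling combined with a localisation compatible with the time change. If $Z$ is a local $\mathcal{F}_t$-martingale, choose reducing stopping times $\tau_n\uparrow\infty$ with each $Z^{\tau_n}$ a uniformly integrable martingale. The key observations are that the $\beta_{\tau_n}$ are $\mathcal{F}'_t$-stopping times tending to $+\infty$ and that $H_{\beta}(Z)^{\beta_{\tau_n}}=H_{\beta}(Z^{\tau_n})$, since $\alpha_{t\wedge\beta_{\tau_n}}=\alpha_t\wedge\tau_n$. It therefore suffices to show that $H_{\beta}(M)$ is a true $\mathcal{F}'_t$-martingale when $M$ is uniformly integrable, which is exactly Doob's optional sampling applied to the ordered stopping times $\alpha_s\le\alpha_t$, giving $\mathbb{E}[M_{\alpha_t}\mid\mathcal{F}_{\alpha_s}]=M_{\alpha_s}$. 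With this in hand the first statement is immediate: decomposing $Z=M+A$ with $M$ a local martingale and $A$ of finite variation yields $H_{\beta}(Z)=H_{\beta}(M)+H_{\beta}(A)$, where $H_{\beta}(M)$ is a local $\mathcal{F}'_t$-martingale and $H_{\beta}(A)_t=A_{\alpha_t}$ has finite variation because $\alpha$ is continuous and increasing, so $H_{\beta}(Z)$ is an $\mathcal{F}'_t$-semimartingale.

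The substitution rule of the fourth statement is where the real work lies, and the remaining statements follow cleanly from it. I would first verify $\int_0^{\alpha_t}K_s\,dZ_s=\int_0^t H_{\beta}(K)_s\,dH_{\beta}(Z)_s$ for elementary predictable $K$, where it reduces to telescoping increments together with $H_{\beta}(Z)_t=Z_{\alpha_t}$, and then extend to arbitrary integrable predictable $K$ by the usual approximation in the semimartingale topology, the local boundedness of $\beta'$ guaranteeing that $H_{\beta}$ respects the relevant limits and integrability. The third statement then drops out of the integration-by-parts formula $[Z,Z']=ZZ'-Z_0Z'_0-\int Z_{s-}\,dZ'_s-\int Z'_{s-}\,dZ_s$: since $H_{\beta}$ commutes with products and with $Z\mapsto Z_-$ (continuity of $\alpha$ gives $H_{\beta}(Z)_{t-}=Z_{\alpha_t-}=H_{\beta}(Z_-)_t$) and, by the fourth statement, with stochastic integrals, applying $H_{\beta}$ to both sides gives $H_{\beta}([Z,Z'])=[H_{\beta}(Z),H_{\beta}(Z')]$. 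Finally, the compensator identity follows by the same martingale-uniqueness principle: $\mu^p$ is the unique predictable random measure for which $W*(\mu-\mu^p)$ is a local martingale over a sufficient class of predictable $W$; applying $H_{\beta}$, using the fourth statement for the random-measure integrals together with the second, and noting that $H_{\beta}(\mu^p)$ is $\mathcal{F}'_t$-predictable, identifies $H_{\beta}(\mu^p)$ as the compensator of $H_{\beta}(\mu)$.

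The principal obstacle throughout is ensuring that each of these characterisations survives the passage to the time-changed filtration $\mathcal{F}'_t$ — most acutely in the optional-sampling step underlying the second statement and in the approximation step underlying the fourth — which is precisely where the continuity, strict monotonicity and locally bounded derivative of $\beta$ are indispensable, as they rule out jumps of the time change, keep $\alpha_t$ finite, and preserve integrability under the limits. These are the classical time-change results of stochastic calculus, so for the routine approximation passages I would appeal to the standard treatments (e.g.\ \cite{Jacod2003,Protter1990}).
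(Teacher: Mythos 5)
Your proposal is correct, but it takes a genuinely different route from the paper: the paper's entire proof consists of observing that, since $\beta$ is continuous, it is an \emph{adapted change of time} in the sense of Jacod (1979, Chapter X, Section b)), and then citing that reference for all five assertions. You instead reconstruct the classical arguments: optional sampling plus the identity $\alpha_{t\wedge\beta_{\tau_n}}=\alpha_t\wedge\tau_n$ for statement 2, the decomposition $Z=M+A$ for statement 1, elementary integrands plus approximation for statement 4, integration by parts together with $H_{\beta}(Z_-)=H_{\beta}(Z)_-$ for statement 3, and the martingale characterisation of the compensator for statement 5 --- all of which are sound, and the reductions you use (e.g.\ deducing 1 from 2, and 3 and 5 from 4 and 2) are exactly the right logical skeleton. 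What the paper's approach buys is economy and rigour by reference; what yours buys is that it makes explicit precisely where continuity, strict monotonicity and divergence of $\beta$ enter (stopping-time identities, commutation with left limits, preservation of predictability), which the citation hides. Two small caveats: the approximation passage in statement 4 and the fact that $F$-predictability of $W$ yields $\mathcal{F}'$-predictability of $H_{\beta}(W)$ in statement 5 are themselves the nontrivial content of the cited literature, so your sketch ultimately leans on the same sources as the paper at exactly those points; and the validity of the limit passage in statement 4 rests on the continuity and finiteness of the time change rather than on the local boundedness of $\beta'$ as you state --- the locally bounded (and locally bounded away from zero) derivative is what keeps $\alpha_t$ finite and absolutely continuous, not what justifies the convergence of the integrals.
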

\begin{proof}
Since the random time change $\beta$ is continuous, $\beta$ is an \emph{adapted change of time} in the meaning of \cite{Jacod1979}( Chapter X, Section b)).\\
Thank to  this remark the proofs of assertions 1, ..., 5 can be found in \cite{Jacod1979}( Chapter X, Sections b) and c)). ${}\hfill$
\end{proof}\\

Taking into account Theorem \ref{theorem_time1}, a quite natural definition of time symmetry seems at first view to be the following: a semimartingale $Z$ has time symmetries if, for any $\beta$  satisfying  the
previous hypotheses, $Z$ and $H_{\beta}(Z)$ have the same law. Unfortunately, using  for example standard deterministic time changes, it is
possible to prove that the only process satisfying the previous definition is the process almost surely equal to a constant. For this reason we
introduce the following, different,  definition, which has the advantage of admitting non-trivial examples.

\begin{definition}\label{definition_time}
Let $Z$ be a semimartingale on a Lie group $N$ and let $\Gamma_\cdot:N \times \mathbb{R}_+ \rightarrow N$ be an $\mathbb{R}_+$ action such that
$\Gamma_r(1_N)=1_N$ for any $r \in \mathbb{R}_+$. We say that $Z$ has a time symmetry with action $\Gamma_r$ with respect to the filtration $\mathcal{F}_t$ if
$$dZ'_t=H_{\beta}(\Gamma_{\beta'_t}(dZ_t)),$$
has the same law of $Z$ for any $\beta_t$  satisfying the previous hypotheses and such that $\beta'_t$ is a $\mathcal{F}_t$-predictable locally
bounded process in $\mathbb{R}_+$.
\end{definition}

\begin{remark}
The request that $\beta'_t$ is a locally bounded process in $\mathbb{R}_+$ ensures  that $\beta'_t(\omega) \geq c(\omega) >0$ for some $c(\omega)
\in \mathbb{R}_+$ and for $t$ in compact subsets of $\mathbb{R}_+$.
\end{remark}

\begin{lemma}\label{lemma_time2}
If $(X,Z)$ is a solution to the SDE $\overline{\Psi}_{K_t}$ and  $\beta$ is an absolutely continuous process such that $\beta'_t$ is locally bounded in
$\mathbb{R}_+$, then $(H_{\beta}(X),H_{\beta}(Z))$ is a solution to the SDE $\overline{\Psi}_{H_{\beta}(K)_t}$.
\end{lemma}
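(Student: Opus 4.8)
The plan is to apply the time change $H_\beta$ term by term to the defining integral equation \refeqn{equation_manifold2} of the solution $(X,Z)$ and to recognize the outcome as the very same equation with $K$ replaced by $H_\beta(K)$. As in the proofs of Theorem \ref{theorem_geometrical1} and Theorem \ref{theorem_gauge1}, I would first reduce to the case in which $M,N$ are open subsets of $\mathbb{R}^m,\mathbb{R}^n$ (the general manifold case following by composing with the embeddings $i_1,i_2$ and an extension $\tilde{\Psi}_k$), so that $(X,Z)$ satisfies \refeqn{equation_manifold2} in coordinates.

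The starting observation is that, since $\beta$ is continuous, strictly increasing and absolutely continuous with strictly positive locally bounded derivative, its inverse $\alpha$ is continuous, strictly increasing and absolutely continuous; hence $H_\beta$ relocates the jumps of a process without creating or destroying any, and it maps left limits to left limits. Concretely, $H_\beta(X)_{t_-}=X_{(\alpha_t)_-}$, and writing $u=\alpha_s$ one has the identifications $X_{(\alpha_s)_-}=H_\beta(X)_{s_-}$, $Z_{\alpha_s}=H_\beta(Z)_s$, $Z_{(\alpha_s)_-}=H_\beta(Z)_{s_-}$, $\Delta Z^\alpha_{\alpha_s}=\Delta H_\beta(Z)^\alpha_s$ and $K_{\alpha_s}=H_\beta(K)_s$. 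These elementary identities are precisely what turns a coefficient $\overline{\Psi}^i_{K_{\alpha_s}}(X_{(\alpha_s)_-},\dots)$ into $\overline{\Psi}^i_{H_\beta(K)_s}(H_\beta(X)_{s_-},\dots)$ after the substitution $u=\alpha_s$.

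Next I would evaluate \refeqn{equation_manifold2} at time $\alpha_t$, so that the left-hand side becomes $H_\beta(X)^i_t-H_\beta(X)^i_0$, and transform each term on the right using Theorem \ref{theorem_time1}. For the stochastic integral against $dZ^\alpha$ I would invoke item 4 of that theorem, which gives $\int_0^{\alpha_t}f_s\,dZ^\alpha_s=\int_0^t H_\beta(f)_s\,dH_\beta(Z)^\alpha_s$ for the predictable integrand $f_s=\partial_{z'^\alpha}(\overline{\Psi}^i_{K_s})(X_{s_-},Z_{s_-},Z_{s_-})$, the same item also guaranteeing the integrability of $H_\beta(f)$ against $H_\beta(Z)$. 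For the bracket term I would first use item 3, $H_\beta([Z^\alpha,Z^\beta])=[H_\beta(Z^\alpha),H_\beta(Z^\beta)]$, and then apply the same change of variables to the finite-variation integrator $[Z^\alpha,Z^\beta]$. Finally, because $\beta$ is continuous, the jump sum over $0\le u\le\alpha_t$ is converted, by the relabeling $s=\beta_u$, into the jump sum over $0\le s\le t$, each summand being transported by the identities of the previous paragraph. Collecting the transformed terms shows that $(H_\beta(X),H_\beta(Z))$ satisfies \refeqn{equation_manifold2} verbatim with $K$ replaced by $H_\beta(K)$, i.e. it solves the geometrical SDE $\overline{\Psi}_{H_\beta(K)_t}$.

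The point requiring care is not any single computation but the consistent bookkeeping of the left limits and of the jump times under the time change: one must check that $H_\beta$ induces a size-preserving bijection between the jumps of $(X,Z)$ on $[0,\alpha_t]$ and those of $(H_\beta(X),H_\beta(Z))$ on $[0,t]$, and that the predictable integrands appearing in \refeqn{equation_manifold2} are transported exactly to the predictable integrands built from the time-changed processes and from $H_\beta(K)$, so that Theorem \ref{theorem_time1}, item 4, legitimately applies to each of them. Once these identifications are in place, the equality of the two integral equations is immediate.
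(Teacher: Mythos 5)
Your proposal is correct and follows essentially the same route as the paper, whose entire proof is the one-line observation that the claim is a direct consequence of Definition \ref{definition_solution} and Theorem \ref{theorem_time1}, point 4. What you have written is simply that argument carried out explicitly -- evaluating \refeqn{equation_manifold2} at $\alpha_t$, transporting the stochastic integral, bracket and jump-sum terms via Theorem \ref{theorem_time1} (points 3 and 4) and the continuity and strict monotonicity of $\alpha$ -- so it matches the paper's intended proof in full.
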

\begin{proof}
The thesis is a simple consequence of Definition \ref{definition_solution} and Theorem \ref{theorem_time1}, point 4. ${}\hfill$ \end{proof}\\

We now prove the analogue of Proposition \ref{proposition_gauge2} for our time symmetry.

\begin{proposition}\label{proposition_time1}
Given two Lie groups $N$ and $\tilde{N}$, let $Z$ be a semimartingale on  $N$ with the time symmetry $\Gamma_r$ and let $\Theta:N \rightarrow \tilde{N}$ be a
diffeomorphism such that $\Theta(1_N)=1_{\tilde{N}}$. Then the process $d\tilde{Z}_t=\Theta(dZ_t)$ is a semimartingale with  the time symmetry $\Theta
\circ \Gamma_r \circ \Theta^{-1}$.
\end{proposition}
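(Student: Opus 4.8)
The plan is to mirror the proof of Proposition \ref{proposition_gauge2}, the one genuinely new feature being that the random time change $H_\beta$ must be transported through the diffeomorphism $\Theta$. Fix an absolutely continuous $\beta$ with $\beta'_t$ predictable and locally bounded, write $\tilde\Gamma_r=\Theta\circ\Gamma_r\circ\Theta^{-1}$, and note that the claim is that $d\tilde Z'_t=H_\beta(\tilde\Gamma_{\beta'_t}(d\tilde Z_t))$ has the same law as $\tilde Z$. First I would invert the defining relation $d\tilde Z_t=\Theta(dZ_t)$ to $dZ_t=\Theta^{-1}(d\tilde Z_t)$ exactly as in the proof of Proposition \ref{proposition_gauge2} (Corollary \ref{corollary_gauge1} applied to $\Theta$), and then use the composition of semimartingale changes furnished by Theorem \ref{theorem_gauge1}: applying $\tilde\Gamma_{\beta'_t}=\Theta\circ\Gamma_{\beta'_t}\circ\Theta^{-1}$ as a single noise change equals applying $\Theta^{-1}$, then $\Gamma_{\beta'_t}$, then $\Theta$ successively. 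Since $\Theta^{-1}(d\tilde Z_t)=dZ_t$, this yields $\tilde\Gamma_{\beta'_t}(d\tilde Z_t)=\Theta(\Gamma_{\beta'_t}(dZ_t))$. Denoting by $W$ the semimartingale on $N$ with $dW_t=\Gamma_{\beta'_t}(dZ_t)$, the solution of $d\tilde Y_t=\tilde\Gamma_{\beta'_t}(d\tilde Z_t)$ is therefore $\tilde Y=\Theta(W)$.

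Next I would commute $H_\beta$ past $\Theta$. The pair $(\Theta(W),W)$ solves the canonical SDE with defining map $(\tilde y,w)\mapsto\Theta(\Delta w)\cdot\tilde y$, which carries no (equivalently, a constant) control $K$, so $H_\beta(K)=K$. Applying Lemma \ref{lemma_time2} shows that $(H_\beta(\Theta(W)),H_\beta(W))$ solves the very same canonical SDE, and uniqueness of its solution (Theorem \ref{theorem_manifold1}) gives $H_\beta(\Theta(W))=\Theta(H_\beta(W))$. Combining this with the first step,
\[ d\tilde Z'_t=H_\beta(\tilde\Gamma_{\beta'_t}(d\tilde Z_t))=H_\beta(\Theta(dW_t))=\Theta(H_\beta(\Gamma_{\beta'_t}(dZ_t)))=\Theta(dZ'_t), \]
where $dZ'_t=H_\beta(\Gamma_{\beta'_t}(dZ_t))$ is precisely the process occurring in the time symmetry of $Z$; by Definition \ref{definition_time}, $Z'$ has the same law as $Z$.

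Finally I would conclude as in Proposition \ref{proposition_gauge2}: the map sending a semimartingale $U$ on $N$ to the strong solution of $d(\cdot)_t=\Theta(dU_t)$ is a fixed measurable functional, independent of the underlying law, hence it sends equal laws to equal laws. Since $Z'$ and $Z$ have the same law, $\tilde Z'=\Theta(Z')$ has the same law as $\Theta(Z)=\tilde Z$, which is the assertion; as $\beta$ was arbitrary, $\tilde Z$ admits $\tilde\Gamma_r$ as a time symmetry. I expect the main obstacle to be the rigorous justification of the commutation $H_\beta\circ\Theta=\Theta\circ H_\beta$ together with the accompanying filtration bookkeeping: one must verify that $\beta'_t$ stays predictable and locally bounded in the transformed frame, that the pair $(\Theta(W),W)$ genuinely falls under Lemma \ref{lemma_time2}, and that the time-changed filtration $\mathcal{F}'_t=\mathcal{F}_{\alpha_t}$ does not spoil the law-preservation step. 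The composition of semimartingale changes and the final equality-in-law argument are then routine consequences of Theorem \ref{theorem_gauge1} and Theorem \ref{theorem_manifold1}.
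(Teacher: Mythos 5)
Your proposal is correct and follows essentially the same route as the paper's own proof: inversion of $d\tilde Z_t=\Theta(dZ_t)$ via Corollary \ref{corollary_gauge1}, application of the time symmetry of $Z$ to $dZ'_t=\Gamma_{\beta'_t}(dZ_t)$, commutation of $H_\beta$ with $\Theta$ via Lemma \ref{lemma_time2}, and transfer of equality in law through $\Theta$ by uniqueness of solutions to the geometrical SDE. Your write-up is simply a more detailed spelling-out (e.g.\ of the canonical SDE solved by $(\Theta(W),W)$) of the paper's terser three-sentence argument.
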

\begin{proof}
From Corollary \ref{corollary_gauge1} we have that $dZ_t=\Theta^{-1}(d\tilde{Z}_t)$ and, since $\Gamma_r$ is a time symmetry for $Z$, if
$dZ'_t=\Gamma_{\beta'_t}(\Theta^{-1}(d\tilde{Z}_t))$, then $H_{\beta}(Z')$ has the same law as $Z$. Hence, by the uniqueness of the solution to a
geometrical SDE, $d\tilde{Z}_t'=\Theta(dH_{\beta}(Z')_t)$ has the same law as $\tilde{Z}$. On the other hand from Lemma \ref{lemma_time2}, we
have $H_{\beta}(\Theta(dZ'_t))=\Theta(dH_{\beta}(Z)_t)$. ${}\hfill$ \end{proof}

\begin{lemma}\label{lemma_time3}
Let $Z$ be a semimartingale with characteristics $(b,A,\nu)$. Then $H_{\beta}(Z)$ has characteristics
$(H_{\beta}(b),H_{\beta}(A),H_{\beta}(\nu))$.
\end{lemma}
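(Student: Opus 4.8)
The plan is to verify directly the three local-martingale conditions of Definition \ref{definition_characteristic} for the time-changed process $H_{\beta}(Z)$, taking $(H_{\beta}(b),H_{\beta}(A),H_{\beta}(\nu))$ as candidate characteristic triplet, and then to invoke the uniqueness part of Theorem \ref{theorem_characteristic1}. The whole argument rests on the five properties of the absolutely continuous time change collected in Theorem \ref{theorem_time1}, together with two elementary pathwise observations. Since $\beta$ (and hence $\alpha$) is continuous and strictly increasing, we have $H_{\beta}(Z)_{s_-}=Z_{\alpha_s-}$ and $\Delta H_{\beta}(Z)_s=\Delta Z_{\alpha_s}$, so the time change induces a bijection between the jump times of $Z$ in $[0,\alpha_t]$ and those of $H_{\beta}(Z)$ in $[0,t]$ which preserves the jump sizes; moreover $H_{\beta}(f(Z))=f(H_{\beta}(Z))$ and $H_{\beta}(Y_{\alpha}(f)(Z_-))=Y_{\alpha}(f)(H_{\beta}(Z)_-)$ for every $f\in\cinf(N)$. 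These identities guarantee in particular that the jump measure satisfies $\mu^{H_{\beta}(Z)}=H_{\beta}(\mu^Z)$.

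First I would treat the measure characteristic \refeqn{equation_characteristic2}. By definition $\nu$ is the compensator of $\mu^Z$, so by point 5 of Theorem \ref{theorem_time1} the measure $H_{\beta}(\nu)$ is the compensator of $H_{\beta}(\mu^Z)=\mu^{H_{\beta}(Z)}$; unwinding the definition of compensator, this says exactly that $\sum_{0\le s\le t}p(\Delta H_{\beta}(Z)_s)-\int_0^t\int_N p(z')\,H_{\beta}(\nu)(ds,dz')$ is a local martingale, which is condition \refeqn{equation_characteristic2} for $H_{\beta}(Z)$.

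Next I would handle the diffusion and drift characteristics \refeqn{equation_charcteristic3} and \refeqn{equation_characteristic4} by applying $H_{\beta}$ to the processes that are local martingales by hypothesis, and checking that the outcome is the corresponding process built from $(H_{\beta}(b),H_{\beta}(A))$. For the continuous bracket I use point 3 applied to the real semimartingales $f(Z),g(Z)$ together with the preservation of the pure-jump part of the bracket, which yields $H_{\beta}([f(Z),g(Z)]^c)=[f(H_{\beta}(Z)),g(H_{\beta}(Z))]^c$; for the $db$ and $dA$ integrals I use the change-of-variables formula in point 4 together with the left-limit identities above; and the pathwise jump sum in \refeqn{equation_characteristic4} transforms correctly by the jump bijection. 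Since the original processes are local $\mathcal{F}_t$-martingales, point 2 of Theorem \ref{theorem_time1} shows that their $H_{\beta}$-images are local $\mathcal{F}'_t$-martingales, i.e.\ conditions \refeqn{equation_charcteristic3} and \refeqn{equation_characteristic4} hold for $H_{\beta}(Z)$. Finally I would note that $H_{\beta}$ maps predictable bounded-variation processes, continuous predictable positive-semidefinite matrix-valued processes, and predictable random measures to objects of the same type relative to $\mathcal{F}'_t=\mathcal{F}_{\alpha_t}$, so $(H_{\beta}(b),H_{\beta}(A),H_{\beta}(\nu))$ is an admissible triplet, and conclude by the uniqueness assertion of Theorem \ref{theorem_characteristic1}.

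The main obstacle I expect is not any single computation but the careful bookkeeping of how the left limits, the continuous part of the quadratic variation, and the pathwise jump sums behave under the time change; the crux is the pair of identities $\mu^{H_{\beta}(Z)}=H_{\beta}(\mu^Z)$ and $H_{\beta}(Z)_{s_-}=Z_{\alpha_s-}$, both of which hinge on the continuity and strict monotonicity of $\beta$. Once these are in place every term transforms term by term under $H_{\beta}$ and the local-martingale property is transported by point 2 of Theorem \ref{theorem_time1}. (I note that by the remark following Definition \ref{definition_characteristic} one may in fact omit the verification of \refeqn{equation_charcteristic3}, since it is redundant.)
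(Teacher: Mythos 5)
Your proposal is correct and follows essentially the same route as the paper: the identity $\mu^{H_{\beta}(Z)}=H_{\beta}(\mu^{Z})$ combined with point 5 of Theorem \ref{theorem_time1} for the measure characteristic, and points 2, 3 and 4 of Theorem \ref{theorem_time1} applied to the defining local martingales for $b$ and $A$. You merely spell out the bookkeeping for $b$ and $A$ that the paper compresses into "the proof for $b$ and $A$ is similar," which is a fair expansion rather than a different argument.
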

\begin{proof}
First we recall that $\nu$ is the compensator of the random measure $\mu^Z$ defined by
$$\mu^Z(\omega,dt,dz)=\sum_{s \geq 0}I_{\Delta Z_s \not =1_N}\delta_{(s,\Delta Z_s(\omega))}(dt,dz)$$
(see the proof of Theorem \ref{theorem_characteristic1}). This means that the random measure associated with $\bar{Z}=H_{\beta}(Z)$ is
\begin{eqnarray*}
\mu^{\bar{Z}}(\omega,dt,dz)&=&\sum_{s \geq 0}I_{\Delta Z_{\alpha_s} \not =1_N}\delta_{(s,\Delta Z_{\alpha_s}(\omega))}(dt,dz)\\
&=&H_{\beta}(\mu^{Z}).
\end{eqnarray*}
Since, by Theorem \ref{theorem_time1}, $H_{\beta}(\mu^Z)$ has $H_{\beta}(\nu)$ as compensator, the characteristic measure of
$H_{\beta}(Z)$ is $H_{\beta}(\nu)$.\\
The proof for $b$ and $A$ is similar and follows from the definition of characteristics and points 2, 3 and 4 of Theorem \ref{theorem_time1}.
${}\hfill$ \end{proof}\\

We  shall now discuss a version of Theorem \ref{theorem_characteristic2} for time symmetries, considering  $\Omega_B$ as the set of locally bounded
functions from $\mathbb{R}_+$ into itself and the process $\beta_t$ defined by
$$\beta_t=\int_0^t{\omega_B(s)ds}.$$
The map $\Lambda: \Omega^c \rightarrow \Omega^c$ (see  Subsection \ref{subsection_gauge_main}) is the composition of two functions: the map $\Lambda_{\Gamma_r}$ induced by the solution to the
SDE $\Gamma_{\beta'_t}(dZ_t)$, as in Subsection \ref{subsection_gauge_main}, and the map $H_{\beta}$, induced by the time transformation, from
$\Omega^c$ into itself, defined  by
$$H_{\beta}(\omega_A(t),\omega_B(t))=(\omega_A(\alpha_t),\omega_B(\alpha_t)).$$
Since both $\Lambda_{\Gamma_r}$ and $H_{\beta}$ are invertible, also $\Lambda$ is invertible and we denote by $\Lambda'$ its inverse.\\
In the same way  we introduced the linear maps $\Upsilon_g$ and $O_g$ for the $\mathcal{G}$-action, in the present case we  introduce two maps $\gamma_r:
\mathfrak{n} \rightarrow \mathfrak{n}$ and $Q_r:\mathfrak{n} \times \mathfrak{n} \rightarrow \mathfrak{n}$ such that, for any smooth function
$f:N \rightarrow N$ and for any right invariant vector fields $Y,Y'$,
\begin{eqnarray*}
Y^z(f(\Gamma_r(z) \cdot \tilde{z}))|_{z=1_N}&=&\gamma_r(Y)(f)(\tilde{z})\\
Y'^z(Y^z(f(\Gamma_r(z) \cdot \tilde{z})))|_{z=1_N}&=&\gamma_r(Y')(\gamma_r(Y)(f))(\tilde{z})+
Q_r(Y,Y')(f)(\tilde{z}).
\end{eqnarray*}
If $Y_1,...,Y_n$ is a basis of right-invariant vector fields, we denote by $\gamma^{\alpha}_{r,\beta},Q^{\alpha}_{r,\beta\gamma}$ the components
of the maps $\gamma_r,Q_r$ with respect to the basis $Y_1,...,Y_n$.

\begin{theorem}\label{theorem_time2}
Let $Z$ be a semimartingale on a Lie group $N$ with characteristics $(b(\omega_A),A(\omega_A),\nu(\omega_A))$. If $Z$ has a time symmetry with
action $\Gamma_r$ then, for any probability measure on $\Omega^c$ such that $\tilde{\mathcal{F}}_t$ is a generalized natural filtration with
respect to both $Z_t$ and $d\tilde{Z}_t=\Xi_{G_t}(dZ_t)$, we have that
\begin{eqnarray*}
db^{\alpha}_t(\omega)&=&\gamma^{\alpha}_{H_{\beta}(\beta'_t(\omega_B)),\beta}dH_{\beta}(b)^{\beta}(\Lambda'(\omega))+\frac{1}{2}Q^{\alpha}_{H_{\beta}(\beta')_t(\omega_B),\beta\gamma}
dH_{\beta}(A^{\beta\gamma})_t(\Lambda'(\omega)))+\\
&&+\left(\int_N{(h^{\alpha}(z')-h^{\beta}(\Gamma_{r^{-1}}(z'))\gamma^{\alpha}_{\beta'_t(\omega_B),\beta})H_{\beta}(\nu(\Lambda'(\omega),dt,dz'))}\right)\\
dA_t^{\alpha\beta}(\omega)&=&\gamma^{\alpha}_{H_{\beta}(\beta')_t(\omega_B),\gamma}\gamma^{\beta}_{H_{\beta}(\beta')_t(\omega_B),\delta}
dH_{\beta}(A)_t^{\gamma\delta}(\Lambda'(\omega))\\
\nu(\omega,dt,dz)&=&\Gamma_{H_{\beta}(\beta')_t(\omega_B)*}(H_{\beta}(\nu(\Lambda'(\omega),dt,dz))),
\end{eqnarray*}
up to a $\mathbb{P}'=\Lambda_*(\mathbb{P})$ null set. Furthermore, if $\tilde{b},\tilde{A},\tilde{\nu}$ are $\pi^{-1}_A(\mathcal{F}^A)$
measurable,  the previous equalities hold with respect to null sets of the law of $Z$.\\
Finally, if the triplet $(b,A,\nu)$ uniquely determines  the law of $Z$,
the previous conditions are also sufficient for the existence of a time
symmetry.
\end{theorem}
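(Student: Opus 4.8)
The plan is to mirror the proof of Theorem \ref{theorem_characteristic2} verbatim, replacing the single gauge change $\Xi_{G_t}$ by the two-stage map defining the time symmetry, namely first the canonical semimartingale change $\hat{Z}$ given by $d\hat{Z}_t = \Gamma_{\beta'_t}(dZ_t)$ and then the time change $Z' = H_\beta(\hat{Z})$. First I would prove the exact analogue of Lemma \ref{lemma_characteristic1} for $\hat{Z}$: since the maps $\gamma_r,Q_r$ are built from the $\mathbb{R}_+$-action $\Gamma_r$ in precisely the way $\Upsilon_g,O_g$ are built from $\Xi_g$, the same computation --- using the definition of a solution to a canonical SDE, Lemma \ref{lemma_geometrical1}, Remark \ref{remark_geometrical1}, and the predictable-projection characterization of the compensator --- shows that $\hat{Z}$ has characteristics obtained from those of Lemma \ref{lemma_characteristic1} by substituting $\gamma_{\beta'_t},Q_{\beta'_t},\Gamma_{\beta'_t}$ for $\Upsilon_{g},O_g,\Xi_g$. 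Then I would feed $\hat{Z}$ into Lemma \ref{lemma_time3}: because $Z'=H_\beta(\hat{Z})$, its characteristics are $(H_\beta(\hat{b}),H_\beta(\hat{A}),H_\beta(\hat{\nu}))$, and pushing $H_\beta$ through the integrals (points 2--5 of Theorem \ref{theorem_time1}) time-changes every coefficient, replacing the predictable argument $\beta'_t$ by $H_\beta(\beta')_t$ and transporting $\nu,A$ to $H_\beta(\nu),H_\beta(A)$; combined with the previous step this reproduces the three displayed right-hand sides.

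I would then close exactly as in Theorem \ref{theorem_characteristic2}. On the canonical space $\Omega^c$ the canonical process $Z_t(\omega)=\pi_A(\omega)(t)$ and the transformed process $Z'$ are different functions, but by construction of $\Lambda'$ the process $Z'(\Lambda'(\omega))$ coincides with $Z$. The hypothesis that $\Gamma_r$ is a time symmetry gives $Z'$ and $Z$ the same law; since the filtrations in play are canonical they share the same characteristics up to $\mathbb{P}'$-null sets, yielding $b(\omega)=\hat{b}(\Lambda'(\omega))$ together with the analogous identities for $A$ and $\nu$, which is precisely the assertion once Steps one and two are inserted. The $\pi_A^{-1}(\mathcal{F}^A)$-measurability refinement and the sufficiency statement when $(b,A,\nu)$ determines the law then follow verbatim as in Theorem \ref{theorem_characteristic2} and Theorem \ref{theorem_characteristic3}, the latter providing the standard extension from elementary to general locally bounded predictable $\beta'_t$.

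I expect the main obstacle to be the bookkeeping of the time change: one must verify carefully that the coefficients $\gamma,Q,\Gamma$ evaluated at $\beta'_t$ before the change reappear evaluated at $H_\beta(\beta')_t$ afterwards, and that $H_\beta$ commutes correctly with the stochastic and compensator integrals --- this is exactly what Lemma \ref{lemma_time3} together with points 2--5 of Theorem \ref{theorem_time1} is designed to control. A secondary difficulty, inherited from the gauge setting, is checking that after the combined transformation the completed filtration $\hat{\mathcal{F}}^c_t$ remains a generalized natural filtration, which is what legitimizes comparing the characteristics of $Z'(\Lambda')$ with those of $Z$.
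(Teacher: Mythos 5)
Your proposal is correct and follows essentially the same route as the paper, whose proof of Theorem \ref{theorem_time2} consists precisely in repeating the argument of Theorem \ref{theorem_characteristic2} with Lemma \ref{lemma_time3} used in addition to Lemma \ref{lemma_characteristic1}, the map $\Lambda$ being factored, exactly as you describe, into the canonical change $d\hat{Z}_t=\Gamma_{\beta'_t}(dZ_t)$ followed by the time change $H_\beta$. Your two supplementary observations (transport of the predictable argument $\beta'_t$ to $H_\beta(\beta')_t$ via Theorem \ref{theorem_time1}, and the elementary-to-general extension for sufficiency as in Theorem \ref{theorem_characteristic3}) are the same ingredients the paper relies on.
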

\begin{proof}
The proof is completely similar to the proof of Theorem \ref{theorem_characteristic2}, using  Lemma \ref{lemma_time3} in addition to Lemma
\ref{lemma_characteristic1}. ${}\hfill$ \end{proof}

\subsection{L\'evy processes with time symmetries}

In this section we restrict our attention to L\'evy processes on $N$, proving  some general results about L\'evy processes with  time symmetries and providing
 explicit  examples.

\begin{theorem}\label{theorem_time3}
If $Z$ is a L\'evy process with characteristic triplet $(b_0 t,A_0 t, \nu_0dt)$, which uniquely determines the law of $Z$, then $Z$ admits a time symmetry with action $\Gamma_r$ if and only
if,  for any fixed  $r \in \mathbb{R}_+$,
\begin{eqnarray*}
b_0^{\alpha}&=&\frac{1}{r}\left(\gamma^{\alpha}_{r,\beta}b^{\beta}_0+Q^{\alpha}_{r,\beta\gamma}A^{\beta\gamma}_0\right)+\\
&&+\frac{1}{r}\int_N{(h^{\alpha}(z')-h^{\beta}(\Gamma_{r^{-1}}(z))\gamma^{\alpha}_{r,\beta})\nu_0(dz')}\\
A_0^{\alpha\beta}&=&\frac{1}{r} \gamma^{\alpha}_{r,\gamma}\gamma^{\beta}_{r,\delta}A^{\gamma\delta}_0\\
\nu_0(dz)&=&\frac{1}{r}\Gamma_{r*}(\nu_0(dz)).
\end{eqnarray*}
\end{theorem}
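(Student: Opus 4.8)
The plan is to follow the proof of Theorem \ref{theorem_characteristic3} almost verbatim, replacing the gauge action $\Xi_g$ by the combination of the $\mathbb{R}_+$-action $\Gamma_r$ and the time change $H_{\beta}$, and using Lemma \ref{lemma_time3} together with the $\Gamma_r$-analogue of Lemma \ref{lemma_characteristic1}. Since $\Gamma_r$ enters a canonical SDE exactly as $\Xi_g$ does, the maps $\gamma_r,Q_r$ play here the role of $\Upsilon_g,O_g$, so that Lemma \ref{lemma_characteristic1} applies \emph{mutatis mutandis} to the transformed process $d\hat{Z}_t=\Gamma_r(dZ_t)$.

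For the necessity, I would specialize Definition \ref{definition_time} to the constant speed $\beta'_t\equiv r$, so that $\beta_t=rt$ and $\alpha_t=t/r$. As the time change is then deterministic and $\Gamma_r$ is a diffeomorphism, $\hat{\mathcal{F}}^c_t$ remains a generalized natural filtration and Theorem \ref{theorem_time2} is applicable. Writing $Z'=H_{\beta}(\hat{Z})$ with $d\hat{Z}_t=\Gamma_r(dZ_t)$, I would first compute the characteristics of $\hat{Z}$ by Lemma \ref{lemma_characteristic1}: since $Z$ is L\'evy with deterministic triplet $(b_0t,A_0t,\nu_0dt)$, the triplet of $\hat{Z}$ is again linear in $t$, with drift coefficient $\gamma^{\alpha}_{r,\beta}b_0^{\beta}+\tfrac12 Q^{\alpha}_{r,\beta\gamma}A_0^{\beta\gamma}+\int_N(h^{\alpha}(z')-h^{\beta}(\Gamma_{r^{-1}}(z'))\gamma^{\alpha}_{r,\beta})\nu_0(dz')$, diffusion $\gamma^{\alpha}_{r,\gamma}\gamma^{\beta}_{r,\delta}A_0^{\gamma\delta}$, and jump measure $\Gamma_{r*}(\nu_0)\,dt$. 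Applying $H_{\beta}$, whose density is $\alpha'_t=1/r$, via Lemma \ref{lemma_time3} multiplies each of these coefficients by $1/r$. Because the law of $Z$ is uniquely determined by its characteristics, the time symmetry forces the triplet of $Z'$ to coincide with $(b_0t,A_0t,\nu_0dt)$, which is precisely the three displayed identities.

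For the sufficiency, I would argue exactly as in Theorem \ref{theorem_characteristic3}: the three identities, holding for every $r\in\mathbb{R}_+$, first imply the characteristic relations of Theorem \ref{theorem_time2} for every elementary (piecewise constant in $t$) predictable speed $\beta'_t$ --- here one uses the $\mathbb{R}_+$-action property of $\Gamma_r$ together with the compatibility of $H_{\beta}$ with concatenation of time changes --- and then extend to an arbitrary locally bounded predictable $\beta'_t$ by the same standard approximation argument. Since $(b,A,\nu)$ uniquely determines the law of $Z$, the sufficient half of Theorem \ref{theorem_time2} yields that $Z$ admits $\Gamma_r$ as a time symmetry.

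The main obstacle I anticipate is the bookkeeping around the time change: the relation $\alpha'_t=1/\beta'_{\alpha_t}$ is exactly what produces the factors $1/r$ in the statement, and one must verify that composing the time-dependent action $\Gamma_{\beta'_t}$ with $H_{\beta}$ reduces, pointwise in $t$, to the fixed-$r$ computation above. The accompanying filtration check --- that $\hat{\mathcal{F}}^c_t$ stays a generalized natural filtration after the deterministic time change --- is the other delicate point, though it parallels the corresponding step in the proof of Theorem \ref{theorem_characteristic3}.
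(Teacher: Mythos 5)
Your proposal is correct and follows essentially the same route as the paper: the paper's own proof is a one-line reduction, namely \virgolette{repeat the proof of Theorem \ref{theorem_characteristic3} with Theorem \ref{theorem_characteristic2} replaced by Theorem \ref{theorem_time2}}, and your argument (constant speed $\beta'_t\equiv r$ for necessity, the $\Gamma_r$-analogue of Lemma \ref{lemma_characteristic1} combined with Lemma \ref{lemma_time3} producing the $1/r$ factors, then elementary predictable speeds plus approximation and unique determination of the law for sufficiency) is precisely the intended expansion of that reduction. The only minor discrepancy is the factor $\tfrac12$ on the $Q^{\alpha}_{r,\beta\gamma}A^{\beta\gamma}_0$ term, which your computation retains in agreement with Lemma \ref{lemma_characteristic1} and Theorem \ref{theorem_time2} but which is absent from the statement of Theorem \ref{theorem_time3} itself, apparently a typo in the paper rather than an error on your side.
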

\begin{proof}
The proof is similar to the one of Theorem \ref{theorem_characteristic3}, where Theorem \ref{theorem_characteristic2} is replaced by Theorem
\ref{theorem_time2}. ${}\hfill$ \end{proof}\\

As in the case of gauge symmetries,  also in the case of time symmetries the most difficult task  is the construction of suitable Hunt functions
satisfying  the relations in Theorem \ref{theorem_time3}. For this reason we start by considering  stable processes on
nilpotent Lie groups. In the case where $N=\mathbb{R}^n$, $\alpha$-stable processes are well known since their generator is the
fractional
Laplacian, and  they can be obtained by a subordination from a Brownian motion (see, e.g., \cite{Albeverio2007,Applebaum2004}). \\
The homogeneous $\alpha$-stable processes are L\'evy processes in $\mathbb{R}^n$  depending on a parameter $\alpha \in (0,2]$. If the
parameter $\alpha=2$, then $Z$ is a $n$ dimensional Brownian motion with  generator
$$L_2=\frac{1}{2}\sum_{\beta=1}^n \partial_{z^{\beta}z^{\beta}}.$$
For $\alpha \in (0,2)$ $Z$ is a pure jump L\'evy process with L\'evy measure
$$\nu_{\alpha}(dz)=\frac{1}{|z|^{n+\frac{\alpha}{2}}}dz,$$
where $|\cdot|$ is the standard norm of $\mathbb{R}^n$ and $dz$ is the Lebesgue measure.\\
The generator $L_{\alpha}$ of an $\alpha$-stable process is
$$L_{\alpha}(f)(z)=\int_{\mathbb{R}^n}{\left(f(z+z')-f(z)-I_{|z'|<1}(z')\left(
z'^{\beta}\partial_{z^{\beta}}(f)(z)\right)\right)\nu_{\alpha}(dz')}.$$
\\
Given $B \in O(n)$, let $\Xi_{B}$ be the standard action of $B$ on $\mathbb{R}^n$. Since, by definition, $\Xi_{B}$
preserves the standard metric on $\mathbb{R}^n$,  Corollary \ref{corollary_characteristic1}
implies that $O(n)$ is a gauge symmetry of $Z$ with $\alpha=2$.
Using Corollary \ref{corollary_characteristic1} we  obtain the same result for $\alpha \in (0,2)$, with $\Xi_{B}^*(\nu)=\nu$.\\
Furthermore, the $\mathbb{R}_+$ action
$$\Gamma^{\alpha}_r(z)=r^{\frac{1}{\alpha}}z,$$
is a time symmetry for $Z$. For the Brownian motion case, Theorem \ref{theorem_time2} can be applied directly. For $\alpha \in (0,2)$ it is enough
to observe that the space homogeneity of $\nu(z)$ ensures that
$$\int_{\mathbb{R}^n}(I_{B}(z')-I_{\Gamma_{1/r}(B)})z'^{\alpha}\nu(dz')=0.$$
Moreover,  it is easy to see that $Q^{\alpha}_{\beta\gamma}=0$ and $\Gamma^{\alpha}_{r*}(\nu(z))=r\nu$.
Hence, as a consequence of Theorem \ref{theorem_time2}, the homogeneous $\alpha$-stable processes have time symmetry with respect to the action $\Gamma^{\alpha}_r$.\\

In the following we generalize this construction to some nilpotent group which admits dilations.
The presence of dilations is essential to construct L\'evy measures  satisfying the hypotheses of
Theorem \ref{theorem_time2}. Although the construction proposed is well known and can be found in
\cite{Kunita1994,Kunita1994(1)}, for the convenience of the reader, in the following  we summarize the main steps. \\
Given a  simply connected nilpotent group $N$ and its
 Lie algebra $\mathfrak{n}$, the exponential map
$\exp:\mathfrak{n} \rightarrow N$ is a diffeomorphism. Let
$\Gamma_r:N \rightarrow N$ be a subset of automorphisms of $N$ such
that
$$\Gamma_r \circ \Gamma_s=\Gamma_{rs},$$
and $\Gamma_1=Id_N$. We say that $\Gamma_r$ is a dilation on $N$ if, for any $n \in N$,  $\Gamma_r(n) \rightarrow 1_N$ uniformly on compact sets as
$r \rightarrow 0$.

\begin{remark}
It is important to note that not all Lie groups admit a dilation.  Indeed a necessary condition for  $N$ to admit a dilation is that $N$ is
simply connected and nilpotent (this condition is only necessary, but not sufficient, see, e.g., \cite{Folland1982}).
\end{remark}

Using  the properties of composition of $\Gamma_r$, we can
prove that there exists a linear transformation $S$ of
$\mathfrak{n}$ such that
$$\Gamma_r=\exp(\log(r)S).$$
Moreover,  $S$ is a derivation
of $\mathfrak{n}$,  which means
$$S([Y_1,Y_2])=[S(Y_1),Y_2]+[Y_1,S(Y_2)]$$
and the linear transformation $S$ decomposes in a natural way the Lie
algebra $\mathfrak{n}$. Indeed, let $g$ be  the minimal polynomial
of $S$ and  factorize $g=g_1^{n_1}...g_p^{n_p}$, where
$g_1,...,g_p$ are monic irreducible factors of $g$ and $n_j$ are
positive integers.  If we write $\mathfrak{n}_j=\ker(g_j(S)^{n_j})$,
it is simple to prove that $\mathfrak{n}_j$ are invariant subspaces
for $S$ and $\mathfrak{n}=\bigoplus_{j=1}^p \mathfrak{n}_j$. Let
$\kappa_j=\alpha_j \pm i \beta_j$ (where $\alpha_j,\beta_j \in
\mathbb{R}$), be the eigenvalue associated with the space
$\mathfrak{n}_j$ and put
\begin{eqnarray*}
I&=&\{1\leq j \leq p | \alpha_j=\frac{1}{2}\}\\
J&=&\{1\leq j \leq p | \frac{1}{2}<\alpha_j \}\\
I_1&=&\{1\leq j \leq p|\alpha_j=1\}\\
J_1&=&\{1\leq j \leq p|\frac{1}{2}<\alpha_j<1\}.
\end{eqnarray*}
If $K \subset \{1,...,p\}$, we write $\mathfrak{n}_K=\bigoplus_{j \in K}\mathfrak{n}_j$. We denote by $P_{\mathfrak{n}_K}$ the projection onto
the space $\mathfrak{n}_K$ given by the decomposition of $\mathfrak{n}$ into the subspaces $\mathfrak{n}_j$. If $1$ is not eigenvalue of $S$,
then $S-I$ is invertible. If $1$ is eigenvalues of $S$ we can suppose that $\kappa_1=1$, and we can decompose the space $\mathfrak{n}_1$ into
two subspaces $\tilde{\mathfrak{n}}_1=\{(S-I)(Y)|Y\in \mathfrak{n}_1$ and $\hat{\mathfrak{n}}_1=\{Y \in \mathfrak{n}_1|S(Y)=Y\}$. We can define
a pseudo-inverse $(S-I)^{-1}$ of $(S-I)$ such that, fixing $K_1,...,K_m \in \mathfrak{n}_1 \backslash \hat{\mathfrak{n}}_1$ linearly independent
such that $\spann\{(S-I)(K_1),...,(S-I)(K_m)\}=\tilde{\mathfrak{n}}_1$ and putting $V=\bigoplus_{j \not= 1} \mathfrak{n}_j$,  we have
$$(S-I)^{-1} \circ (Q-I)=P_{V \oplus \spann\{K_1,...,K_m\}}.$$
Choose on $\mathfrak{n}$ a metric $\langle \cdot, \cdot \rangle$ with norm $|\cdot |$ and define
$$K=\{Y \in \mathfrak{n}||Y|=1,|r^S(Y)|>1\text{ for any }r>1\}.$$
If $Y \in \mathfrak{n} \backslash \{0\}$, there exist an unique $\theta \in S$ and an unique $r \in \mathbb{R}_+$ such that $r^Q(\theta)=Y$. The
relation described above defines two smooth functions $\theta:\mathfrak{n} \backslash \{0\} \rightarrow S$ and
$r:\mathfrak{n} \backslash \{0\} \rightarrow \mathbb{R}_+$.\\

\begin{theorem}\label{theorem_time4}
A L\'evy process $Z$ on a nilpotent Lie group $N$ with dilation
 $\Gamma_r$ has the time symmetry with respect to the action $\Gamma_r$ if and only if,
denoting by $(A_0t,b_0t,\nu_0dt)$ the characteristics of $Z$ with respect to the Hunt functions
$h^{\alpha}(z)=\frac{\log^{\alpha}(z)}{1+|z^{\alpha}Y_{\alpha}|^2}$ and writing $M=\log_*(\nu_0)$ where $\log=\exp^{-1}:N \rightarrow
\mathfrak{n}$ and $\log^{\alpha}$ is the component of $\log$ with respect to the basis $Y_1,...,Y_n$ of $\mathfrak{n}$, the following conditions
hold
\begin{enumerate}
\item \label{condition_time_A} $P_{\mathfrak{n}_I} \cdot A \cdot
P^T_{\mathfrak{n}_I}=A$ where $P^T_{\mathfrak{n}_I}$ is the
transpose of $P_{\mathfrak{n}_I}$,
\item \label{condition_time_M}
the support of the measure $M$ is contained in the subspace
$\mathfrak{n}_J$ and
$$dM(Y)=\frac{d\lambda(\theta(Y))d(r(Y))}{(r(Y))^2},$$
where $\lambda$ is a measure on the set $K$, \item \label{condition_time_b} if $\kappa_1 \not = 1$ then $b^{\alpha} Y_{\alpha}
=B_1=\int_{\mathfrak{n}}{\frac{\langle S (Y),Y \rangle }{(1+|Y|^2)^2}(S-I)^{-1}(Y)dM(Y)}$ otherwise $b^iY_i -B_1 \in \hat{\mathfrak{n}}_1$,
\item \label{condition_time_if1_1} if $\kappa_1=1$
$$\int_{\mathfrak{n}}{\frac{\langle S(Y), Y \rangle}{(1+|Y|^2)^2}P_{\mathfrak{n}_1}(Y)dM(Y)} \in \tilde{\mathfrak{n}}_1,$$
\end{enumerate}
\end{theorem}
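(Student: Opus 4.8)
The plan is to read off the four conditions directly from the necessary and sufficient criterion of Theorem~\ref{theorem_time3}, specialising it to a dilation. First I would record two simplifications coming from the fact that $\Gamma_r$ is an automorphism of $N$. Exactly as in the gauge case, where $O_g=0$ for group endomorphisms, the second-order correction vanishes, $Q_r=0$; and since $\Gamma_r=\exp(\log(r)S)$ with $S$ a derivation of $\mathfrak{n}$, the first-order map is the matrix exponential, so that $\gamma^{\beta}_{r,\alpha}=(r^{S})^{\beta}_{\alpha}$ and the relation $T\Gamma_r(Y_\alpha)=\gamma^{\beta}_{r,\alpha}Y_\beta$ holds on all of $N$. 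I would also note that a L\'evy process on a simply connected nilpotent group is determined by its generator, hence by $(b_0,A_0,\nu_0)$, so that the hypothesis of Theorem~\ref{theorem_time3} is met and its three displayed relations are both necessary and sufficient. Passing to exponential coordinates $Y=\log(z)$ (so that $\Gamma_r$ becomes $r^{S}$ and $\nu_0$ becomes $M=\log_*(\nu_0)$, while $h(Y)=Y/(1+|Y|^2)$), those relations read, for every $r>0$,
\begin{eqnarray*}
r\,A_0&=&r^{S}A_0\,(r^{S})^{T},\\
r\,M&=&(r^{S})_{*}M,\\
r\,b_0&=&r^{S}b_0+\int_{\mathfrak{n}}\left(h(Y)-r^{S}h(r^{-S}Y)\right)dM(Y).
\end{eqnarray*}

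Everything then reduces to linear algebra on the $S$-invariant decomposition $\mathfrak{n}=\bigoplus_j\mathfrak{n}_j$. For the covariance I would decompose $A_0$ along $\mathfrak{n}_i\otimes\mathfrak{n}_j$ and match powers of $r$: since $r^{S}$ acts on $\mathfrak{n}_j$ with leading scaling $r^{\alpha_j}$, modulated by bounded oscillatory factors $r^{\pm i\beta_j}$ and unipotent (Jordan) factors, the identity $r^{S}A_0(r^{S})^{T}=r\,A_0$ for all $r$ forces every block off the diagonal $\alpha_i=\alpha_j=\tfrac12$ to vanish and the residual oscillatory and Jordan parts to cancel. As $A_0$ is symmetric positive semidefinite this is exactly $P_{\mathfrak{n}_I}A_0P^{T}_{\mathfrak{n}_I}=A_0$, i.e.\ condition~\ref{condition_time_A}. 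For the jump measure I would use the polar coordinates $Y=r(Y)^{S}\theta(Y)$, $\theta\in K$, constructed in the paragraph preceding the theorem: the scaling relation $r\,M=(r^{S})_{*}M$ is equivalent to $M$ being the homogeneous measure $dM=d\lambda(\theta)\,dr/r^{2}$, and the L\'evy integrability $\int_{\mathfrak{n}}\min(|Y|^2,1)\,dM<\infty$ then forces the radial integral $\int_{0}\min(r^{2\alpha_j},1)\,dr/r^{2}$ to converge, i.e.\ $\alpha_j>\tfrac12$, so that $M$ is supported on $\mathfrak{n}_J$. This yields condition~\ref{condition_time_M}.

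The drift is where the real work lies, and I expect it to be the main obstacle. Because $\{\Gamma_r\}$ is a one-parameter group it is equivalent to impose the third relation only infinitesimally; differentiating at $r=1$ (write $r=e^{t}$) and using $\tfrac{d}{dt}\big|_{0}|e^{-tS}Y|^2=-2\langle SY,Y\rangle$ together with $Sh(Y)=SY/(1+|Y|^2)$ collapses it to a single linear equation which, up to the normalising constant built into $h$ and after identifying the resulting integral with $(S-I)B_1$, reads
$$(S-I)\,b_0=(S-I)B_1,\qquad B_1=\int_{\mathfrak{n}}\frac{\langle SY,Y\rangle}{(1+|Y|^2)^{2}}\,(S-I)^{-1}Y\,dM(Y).$$
When $\kappa_1\neq1$ the operator $S-I$ is invertible and $b_0=B_1$, giving the first alternative of condition~\ref{condition_time_b}. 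When $\kappa_1=1$ I would split $\mathfrak{n}_1=\tilde{\mathfrak{n}}_1\oplus\hat{\mathfrak{n}}_1$ with $\hat{\mathfrak{n}}_1=\ker(S-I)$ and $\tilde{\mathfrak{n}}_1=(S-I)\mathfrak{n}_1$: solvability of the equation on $\mathfrak{n}_1$ requires the $\mathfrak{n}_1$-component of the right-hand side to lie in $\tilde{\mathfrak{n}}_1$, which is precisely condition~\ref{condition_time_if1_1}, and then $b_0$ is determined only modulo $\hat{\mathfrak{n}}_1$, i.e.\ $b^iY_i-B_1\in\hat{\mathfrak{n}}_1$, the second alternative of condition~\ref{condition_time_b}; the pseudo-inverse $(S-I)^{-1}$ introduced before the theorem selects the particular solution $B_1$.

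The delicate points are thus the careful bookkeeping of oscillatory and Jordan contributions in the covariance matching and in the radial integrability estimate, and the solvability analysis at the eigenvalue $1$; the explicit normalisation of the drift integral and the remaining manipulations are routine, and where the underlying homogeneity computations for nilpotent dilations are needed I would follow \cite{Kunita1994}.
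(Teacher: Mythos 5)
Your proposal follows the paper's route on the only step the paper makes explicit, and then diverges in an instructive way. The paper's entire proof is a citation: apply Theorem \ref{theorem_time3} (legitimate since a L\'evy process on a Lie group is determined by its characteristics), and then assert that the resulting scaling relations on $(b_0,A_0,\nu_0)$ are equivalent to the four listed conditions by the results of \cite{Kunita1994,Kunita1994(1)} on stable processes on simply connected nilpotent groups. You perform exactly the same reduction, with the same preliminary observations ($Q_r=0$ and $\gamma_r=r^S$ because $\Gamma_r$ is a one-parameter group of automorphisms, uniqueness of the law given the characteristics), but instead of quoting Kunita you sketch the equivalence yourself: spectral decomposition of the derivation $S$ for the covariance, polar coordinates $Y=r(Y)^S\theta(Y)$ plus the L\'evy integrability constraint for the jump measure, and solvability of the linearized drift equation with the pseudo-inverse $(S-I)^{-1}$ producing $B_1$ and conditions \ref{condition_time_b}--\ref{condition_time_if1_1}. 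That skeleton is precisely Kunita's, so in substance you are reconstructing the cited proof; what the paper buys with the citation is coverage of the analytic fine points you defer or call routine, and what you buy is an explanation of where each condition actually comes from.

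One of those fine points is more than bookkeeping, and your sketch overstates it. The identity $r^SA_0(r^S)^T=rA_0$ for all $r>0$ is strictly stronger than condition \ref{condition_time_A}: it forces $A_0$ to be supported on $\mathfrak{n}_I$ \emph{and} to be invariant under the non-dilating (oscillatory and unipotent) part of $r^S$ on $\mathfrak{n}_I$, and the latter does not follow from positive semidefiniteness together with $P_{\mathfrak{n}_I}A_0P^T_{\mathfrak{n}_I}=A_0$. For instance, on the abelian group $N=\mathbb{R}^2$ take $S=\tfrac12 I+R$ with $R$ the rotation generator (a legitimate derivation generating a dilation): $A_0=\mathrm{diag}(1,2)$ satisfies condition \ref{condition_time_A} but not the scaling identity, so your closing claim in that paragraph (``this is exactly condition \ref{condition_time_A}'') fails in the sufficiency direction. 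The correct equivalence needs either the full scaling identity or additional structure on $S$, and this is exactly the kind of point for which the paper leans on Kunita's precise formulation rather than the condensed statement. A milder instance of the same issue appears in the drift step: the separate integrals $\int h\,dM$ and $\int r^Sh(r^{-S}Y)\,dM$ diverge when $M$ charges directions in $\mathfrak{n}_{J_1}$, so the differentiation at $r=1$ must be carried out on the combined integrand, and the passage from the infinitesimal identity back to all $r$ requires a uniqueness argument for the associated affine flow. None of this invalidates your strategy, but it means your proof, like the paper's, ultimately rests on the detailed computations of \cite{Kunita1994,Kunita1994(1)} --- you invoke them for less, but you still need them where the argument is genuinely delicate.
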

\begin{proof}
Thank to Theorem \ref{theorem_time3}, the statements of this theorem are equivalent to the corresponding statements on the stable processes in
\cite{Kunita1994,Kunita1994(1)}. ${}\hfill$ \end{proof}\\

It is important to note the big  difference between Theorem \ref{theorem_time4} and the construction of L\'evy processes with
gauge symmetries proposed in subsection \ref{subsection_gauge_Levy}. In fact, if we have a compact Lie group $\mathcal{G}$ of
automorphisms on a Lie group $N$, we can construct several  L\'evy processes on $N$  admitting $\mathcal{G}$ as group of gauge symmetries. On the other hand,
it is not true that any dilation $\Gamma_r$ on a nilpotent Lie group $N$ gives rise to  L\'evy processes with the time symmetry with respect
to the action $\Gamma_r$. Indeed in this case the spectral decomposition of the linear
operator $S$ associated with $\Gamma_r$ plays an important role.\\
Furthermore, in the case of gauge symmetries, it is possible to construct L\'evy processes on $N$ with a
 gauge symmetry and continuous and discontinuous parts can be non trivial. On the contrary, in the case of time symmetries
 this is not possible. Indeed the space $\mathfrak{n}_J$ (where
 the jumps are supported) and the space $\mathfrak{n}_I$ (where the continuous martingale part is supported) are
 complementary. The reason is that the behaviour of the transformation $\Gamma_r$ as $r \rightarrow 0$
 is essential for the characterization of the kind of L\'evy process with the time symmetry $\Gamma_r$.\\
This property of time symmetric L\'evy processes seems to keep holding   even if we drop the request that $\Gamma_r$ is an automorphism of $N$.
Indeed, although we are able to construct various L\'evy processes with different behaviour at infinity of the measure $\nu_0$, the behaviour of
the measure $\nu_0$ at $1_N$ is similar to the case in which $\Gamma_r$ is an automorphism of $N$. This fact gives  strong restrictions on the
form of L\'evy processes admitting  time symmetries.

\section{Symmetries and invariance properties of an SDE with jumps}\label{section_symmetry}

\subsection{Stochastic transformations}

Let $\mathcal{C}(\mathbb{P}_0)$ (or simply $\mathcal{C}$) be the class of c\acc{a}dl\acc{a}g semimartingales $Z$ on a Lie group $N$  inducing
the same probability measure on $\mathcal{D}([0,T],N)$ (the metric space of c\acc{a}dl\acc{a}g functions taking values in $N$). In order to
generalize to the semimartingale case the notion of weak solution to an SDE driven by a Brownian motion, we introduce the following definition.

\begin{definition}\label{definition_transformation1}
 Given  a semimartingale $X$ on $M$ and  a semimartingale $Z$ on $N$ such that
$Z \in \mathcal{C}$, the pair $(X,Z)$  is called a \emph{process of class $\mathcal{C}$} on $M$. \\
A process $(X,Z)$ of class $\mathcal{C}$ which is a solution to the canonical SDE $\Psi$ is called a \emph{solution of class $\mathcal{C}$ } to
$\Psi$.
\end{definition}

We remark that if $(X,Z)$ and $(X',Z')$ are two solutions of
 class $\mathcal{C}$ and if $X_0$ and  $X'_0$ have the same law, then also  $X$ and $X'$ have the same law. Hereafter we suppose that the filtration $\mathcal{F}_t$, for which $X$ and $Z$ are semimartingales, is a generalized natural filtration for $Z$. In the usual case considered in the following, where $X$ is a solution of a geometrical SDEs driven by $Z$, and thus where $X$ could be chosen adapted with respect to the natural filtration $\mathcal{F}^Z_t$ of $Z$, the above restriction on $\mathcal{F}_t$ is not relevant. \\

In this section we  define a set of transformations which transform a process of class $\mathcal{C}$ into a new process of class $\mathcal{C}$.
This set of transformations depends on the properties of the processes
belonging to the class $\mathcal{C}$.\\
We start by describing the case of processes in $\mathcal{C}$ admitting a gauge symmetry group $\mathcal{G}$ with action
$\Xi_g$ and a time symmetry with action $\Gamma_r$. Afterwards,  we discuss how to extend our approach to more general situations.\\

\begin{definition}
A stochastic transformation from $M$ into $M'$ is a triad $(\Phi,B,\eta)$, where $\Phi$ is a diffeomorphism of $M$ into $M'$, $B:M \rightarrow
\mathcal{G}$ is a smooth function and $\eta:M \rightarrow \mathbb{R}_+$ is a positive smooth function. We denote the set of stochastic
transformations of $M$ into $M'$ by $S_{\mathcal{G}}(M,M')$.
\end{definition}

A stochastic transformation defines a map between the set of stochastic processes of class $\mathcal{C}$ on $M$ into the set of stochastic
processes of class $\mathcal{C}$ on $M'$. The action of the stochastic transformation $T \in S_{\mathcal{G}}(M,M')$ on the stochastic process
$(X,Z)$ is denoted by $(X',Z')=P_T(X,Z)$, and is defined as follows:
\begin{eqnarray*}
X'&=&\Phi\left[H_{\beta^{\eta}}(X)\right]\\
dZ_t'&=&H_{\beta^{\eta}}\left\{\Xi_{B(X_t)}\left[\Gamma_{\eta(X_t)}(dZ_t)\right]\right\},
\end{eqnarray*}
 where $\beta^{\eta}$ is the random time change given by
$$\beta^{\eta}_t=\int_0^t{\eta(X_s)ds}.$$
The second step is to define  an action of a stochastic transformation $T$ on the set of canonical SDEs. This action transforms a canonical SDE
$\Psi$ on $M$ into the canonical SDE $\Psi'=E_T(\Psi)$ on $M'$ defined by
$$\Psi'(x,z)=\Phi\left\{\Psi\left[\Phi^{-1}(x),(\Gamma_{(\eta(\Phi^{-1}(x)))^{-1}} \circ \Xi_{(B(\Phi^{-1}(x)))^{-1}})(z)\right]\right\}.$$

\begin{theorem}\label{theorem_symmetry3}
If $T \in S_{\mathcal{G}}(M,M')$ is a stochastic transformation and  $(X,Z)$ is a  class $\mathcal{C}$ solution  to the canonical SDE $\Psi$,
then $P_T(X,Z)$ is a class $\mathcal{C}$ solution  to the canonical SDE $E_T(\Psi)$.
\end{theorem}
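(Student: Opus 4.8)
The plan is to verify that $P_T(X,Z)$ is both a process of class $\mathcal{C}$ and a solution to $E_T(\Psi)$, by decomposing the stochastic transformation $T=(\Phi,B,\eta)$ into its three constituent operations and applying the structural theorems already established. The key observation is that the transformation $P_T$ is built by composing a time change (governed by $\eta$), a gauge transformation (governed by $B$), and a spatial diffeomorphism (governed by $\Phi$), and each of these has been analysed separately in the preceding sections.

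First I would verify the class $\mathcal{C}$ membership of $Z'$. Writing $dZ'_t = H_{\beta^{\eta}}\{\Xi_{B(X_t)}[\Gamma_{\eta(X_t)}(dZ_t)]\}$, I would argue that since $B(X_t)$ is a $\mathcal{G}$-valued predictable locally bounded process and $\eta(X_t)$ induces an admissible absolutely continuous time change (with $\eta>0$ smooth, hence $\beta^{\eta}$ has locally bounded strictly positive derivative), the gauge symmetry of $Z$ (Definition \ref{definition_gauge}) and the time symmetry (Definition \ref{definition_time}) jointly guarantee that each stage preserves the law of the driving process. Concretely, $\Gamma_{\eta(X_t)}$ followed by $H_{\beta^{\eta}}$ is exactly the form appearing in the definition of time symmetry, and the intermediate gauge action $\Xi_{B(X_t)}$ is precisely what the gauge symmetry definition preserves. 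Thus $Z' \in \mathcal{C}$.

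Next I would establish that the pair $P_T(X,Z)=(X',Z')$ solves $E_T(\Psi)$, and here the strategy is to peel off the three operations in the order that makes each applicable theorem directly usable. The cleanest route is to introduce intermediate processes: starting from $(X,Z)$ solving $\Psi$, apply Corollary \ref{corollary_gauge1} (or Theorem \ref{theorem_gauge1}) to transfer the gauge action $\Xi_{B(X_t)}$ and the scaling $\Gamma_{\eta(X_t)}$ onto the defining map, which accounts for the appearance of $\Gamma_{(\eta(\Phi^{-1}(x)))^{-1}} \circ \Xi_{(B(\Phi^{-1}(x)))^{-1}}$ inside the formula for $E_T(\Psi)$. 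Then I would apply Lemma \ref{lemma_time2} to handle the time change $H_{\beta^{\eta}}$, showing that the time-changed pair solves the SDE with the correspondingly time-changed predictable coefficient. Finally I would invoke Theorem \ref{theorem_geometrical1} to push the diffeomorphism $\Phi$ through, which conjugates the defining map by $\Phi$ and $\Phi^{-1}$, yielding exactly the outer $\Phi\{\cdots\Phi^{-1}(x)\cdots\}$ structure of $\Psi'=E_T(\Psi)$.

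The main obstacle will be \textbf{bookkeeping the argument of the time change correctly}: because $\beta^{\eta}$ depends on $X$ itself and because $\eta$ is evaluated along the solution, one must be careful that after time-changing, the rate $\eta(X_t)$ and the gauge argument $B(X_t)$ are consistently re-expressed in terms of the transformed process $X'=\Phi[H_{\beta^{\eta}}(X)]$, so that $\eta$ and $B$ appear composed with $\Phi^{-1}$ in $E_T(\Psi)$. The delicate point is that the gauge and time actions must commute with the time change in the precise sense dictated by the definitions of time and gauge symmetry (so that the order of operations in $P_T$ matches the order in $E_T$); verifying this compatibility, rather than any single computation, is where care is needed. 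Once the intermediate processes are correctly identified, the three cited results chain together and the conclusion $dX'_t = \Psi'_{}(dZ'_t)$ follows by the uniqueness of solutions to canonical SDEs (Theorem \ref{theorem_manifold1}).
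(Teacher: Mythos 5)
Your proposal is correct and takes essentially the same route as the paper: the paper's own (two-sentence) proof obtains class $\mathcal{C}$ membership from the gauge and time symmetries of $Z$, and the solution property by combining Theorem \ref{theorem_geometrical1}, Theorem \ref{theorem_gauge1} and Lemma \ref{lemma_time2} --- precisely the three results you chain together, in the same order. Your explicit attention to the bookkeeping (using Corollary \ref{corollary_gauge1} for the inverses appearing in $E_T(\Psi)$, and the compatibility of the time change with the gauge action) is in fact more detailed than what the paper records.
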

\begin{proof}
The fact that $P_T(X,Z)$ is a process of class $\mathcal{C}$ follows from the symmetries  of $Z$, which are the gauge
symmetry group $\mathcal{G}$ with action $\Xi_g$ and the time symmetry with action $\Gamma_r$.\\
The fact that, if $(X,Z)$ is a solution to $\Psi$, then $P_T(X,Z)$ is a solution to $E_T(\Psi)$, follows from Theorem
\ref{theorem_geometrical1}, Theorem \ref{theorem_gauge1} and Lemma \ref{lemma_time2}.
${}\hfill$\end{proof}\\

If $\mathcal{C}$ contains semimartingales which have only the gauge symmetry group $\mathcal{G}$ but without time symmetry,  the stochastic
transformation $T$ reduces to a pair $(\Phi,B)$ and the action on processes and SDEs is the same as in the general case with $\Gamma_r=Id_N$. The
same argument can be applied in the case of $\mathcal{C}$ containing semimartingales which possess only the time symmetry property. In the case of semimartingales
without neither  gauge  nor  time symmetries,  the stochastic transformations can be identified with the diffeomorphisms $\Phi:M \rightarrow M'$
and the action on the processes is $P_T(X,Z)=(\Phi(X),Z)$. Since these kinds of transformations do not change the driving process $Z$ and  play
a special role in the theory of symmetries we call \emph{strong stochastic transformation} a stochastic transformation of the form
$(\Phi,1_N,1)$.

\subsection{The geometry of stochastic transformations}

In this subsection we  prove that stochastic transformations have some interesting geometric properties. These are an extension to c\acc{a}dl\acc{a}g-semimartingales-driven SDEs of the properties proposed in \cite{DMU1} for SDEs driven by  Brownian motions. \\
In order to keep holding some crucial  geometric properties, in the following we require   an additional property on the maps $\Xi_g$ and
$\Gamma_r$, i.e. the commutation of the   two group actions $\Xi_g$ and $\Gamma_r$. In particular we suppose that
\begin{equation}\label{pippo}
\Xi_g(\Gamma_r(z))=\Gamma_r(\Xi_g(z)),
\end{equation}
for any $z \in N$, $g \in \mathcal{G}$ and $r \in \mathbb{R}_+$.

\begin{remark}
Condition \eqref{pippo} can be weakened by requiring  that the set of diffeomorphisms $\Theta_{(r,g)}=\Gamma_r \circ \Xi_g$ is an action of the
semidirect product $\mathbb{R}_+ \rtimes \mathcal{G}$.
 This means that there exists a smooth action $h_{\cdot}:\mathbb{R}_+ \times \mathcal{G} \rightarrow \mathcal{G}$ of $\mathbb{R}_+$
 on $\mathcal{G}$ such that
$$\Gamma_r \circ \Xi_g=\Xi_{h_{r}(g)} \circ \Gamma_r.$$
The commutative case is included in this general setting by taking $h_r(g)=g$. Since we are not able to construct any concrete semimartingale
with gauge symmetries and time symmetry admitting non trivial  $h_r$ and, on the other hand,  condition $h_r(g)=g$ quite  simplifies  the
exposition, we prefer working with the commutativity assumption.
\end{remark}

We can define a composition between two stochastic transformations $T \in S_{\mathcal{G}}(M,M')$ and 
$T' \in S_{\mathcal{G}}(M',M'')$, where
$T=(\Phi,B,\eta)$ and $T'=(\Phi',B',\eta')$,  by
\begin{equation}\label{equation_symmetry1}
T' \circ T=(\Phi' \circ \Phi,(B' \circ \Phi) \cdot B, (\eta' \circ \Phi)\eta ).
\end{equation}

The above composition has a nice geometrical interpretation. 
A stochastic transformation from $M$ into $M'$ can be identified with an isomorphism from the
trivial right principal bundle $M \times \mathcal{H}$ into the trivial right principal bundle $M' \times \mathcal{H}$, $\mathcal{H}=\mathcal{G} \times \mathbb{R}_+$,  which preserves the
principal bundle structure. If we exploit this identification and the natural isomorphism composition we obtain  formula
\refeqn{equation_symmetry1}
(see \cite{DMU1} for the case $\mathcal{G}=SO(n)$).\\
Composition \refeqn{equation_symmetry1}, for any $T \in S_{\mathcal{G}}(M,M')$, permits to define an inverse $T^{-1} \in S_{\mathcal{G}}(M',M)$
as follows
$$T^{-1}= (\Phi^{-1},(B \circ \Phi^{-1})^{-1},(\eta \circ \Phi^{-1})^{-1}).$$
Hence the set $S_{\mathcal{G}}(M):=S_{\mathcal{G}}(M,M)$ is a group with respect to the composition $\circ$ and the identification
 of $S_{\mathcal{G}}(M)$ with $\Iso(M \times \mathcal{H}, M \times \mathcal{H})$ (which is a closed subgroup of the group of diffeomorphisms of $M \times \mathcal{H}$) suggests to consider  the corresponding Lie algebra $V_{\mathcal{G}}(M)$.\\
Given a one parameter group  $T_a=(\Phi_a,B_a,\eta_a) \in S_{\mathcal{G}}(M)$,  there exist a vector field $Y$ on $M$, a smooth function $C:M
\rightarrow \mathfrak{g}$ (where $\mathfrak{g}$ is the Lie algebra of $\mathcal{G}$), and a smooth function $\tau:M \rightarrow \mathbb{R}$ such
that
\begin{equation}\label{equation_infinitesimal_SDE1}\begin{array}{ccc}
Y(x)&:=&\partial_a(\Phi_a(x))|_{a=0}\\
C(x)&:=&\partial_a(B_a(x))|_{a=0}\\
\tau(x)&:=&\partial_a(\eta_a(x))|_{a=0}.
\end{array}
\end{equation}
So if  $Y,C,\tau$ are  as above, the one parameter solution $(\Phi_a,B_a,\eta_a)$ to the equations
\begin{equation}\label{equation_infinitesimal_SDE2}\begin{array}{rcl}
\partial_a(\Phi_a(x))&=&Y(\Phi_a(x))\\
\partial_a(B_a(x))&=&R_{B_a(x)*}(C(\Phi_a(x)))\\
\partial_a(\eta_a(x))&=&\tau(\Phi_a(x))\eta_a(x),
\end{array}
\end{equation}
with initial condition $\Phi_0=id_M$, $B_0=1_{\mathcal{G}}$ and $\eta_0=1$, is a one parameter group in $S_{\mathcal{G}}(M)$. For this reason we identify the elements of $V_{\mathcal{G}}(M)$ with the triads $(Y,C,\tau)$.\\

\begin{definition}
A triad $V=(Y,C,\tau)\in V_{\mathcal{G}}(M)$, where  $Y$ is a vector field on $M$,  $C:M \rightarrow \mathfrak{g}$  and  $\tau:M \rightarrow
\mathbb{R}$  are smooth functions, is an  infinitesimal stochastic transformation. If $V$ is of the form  $V=(Y,0,0)$,  we call $V$ a strong
infinitesimal stochastic transformation, as the corresponding one-parameter group is a group of strong stochastic transformations.
\end{definition}

Since $V_{\mathcal{G}}(M)$ is a Lie subalgebra of the set of vector fields on $M \times \mathcal{H}$, the standard Lie brackets between vector
fields on $M \times \mathcal{H}$ induce some Lie brackets on $V_{\mathcal{G}}(M)$. Indeed, if $V_1=(Y_1,C_1,\tau_1),V_2=(Y_2,C_2,\tau_2) \in
V_m(M)$ are two infinitesimal stochastic transformations, we have
\begin{equation}\label{equation_infinitesimal_SDE5}
\left[V_1,V_2\right]=(\left[Y_1,Y_2\right],Y_1(C_2)-Y_2(C_2)-\{C_1,C_2\},Y_1(\tau_2)-Y_2(\tau_1)),
\end{equation}
where $\{\cdot,\cdot\}$ denotes the usual commutator between elements of $\mathfrak{g}$.\\
Furthermore, the identification of  $T=(\Phi,B,\eta) \in S_{\mathcal{G}}(M,M')$  with  $F_T\in \Iso(M \times \mathcal{H}, M' \times \mathcal{H})$
allows us to define  the push-forward $T_*(V)$  of $V \in V_{\mathcal{G}}(M)$ as
\begin{equation}\label{equation_infinitesimal_SDE4}
(\Phi_*(Y) ,(Ad_{B}(C)+R_{B^{-1}*}(Y(B))) \circ \Phi^{-1},(\tau+Y(\eta)\eta^{-1})\circ \Phi^{-1}),
\end{equation}
where $Ad$ denotes the adjoint operation and the symbol $Y(B)$  the push-forward of $Y$ with respect to the map $B:M \rightarrow \mathcal{G}$.\\
Analogously, given $V' \in V_{\mathcal{G}}(M')$, we can consider  the pull-back of $V'$  defined as $T^*(V')=(T^{-1})_*(V')$. The following
theorem shows that any Lie algebra of general infinitesimal stochastic transformations satisfying a non-degeneracy condition, can be locally
transformed, by the action of the push-forward of a suitable stochastic transformation $T \in S_{\mathcal{G}}(M)$,  into a Lie algebra of strong
infinitesimal stochastic transformations.

\begin{theorem}\label{theorem_infinitesimal_SDE1}
Let $K=\spann \{V_1,...,V_k\}$ be a Lie algebra of $V_{\mathcal{G}}(M)$ and let $x_0 \in M$ be such that $Y_1(x_0),...,Y_k(x_0)$ are linearly
independent (where $V_i=(Y_i,C_i,\tau_i)$). Then there exist an open neighbourhood $U$ of $x_0$ and a stochastic transformation $T \in
S_{\mathcal{G}}(U)$ of the form $T=(Id_U,B,\eta)$ such that $T_*(V_1),...,T_*(V_k)$ are strong infinitesimal stochastic transformations in
$V_{\mathcal{G}}(U)$. Furthermore the smooth functions $B,\eta$ are solutions to the equations
\begin{eqnarray*}
Y_i(B)&=&-L_{B*}(C_i)\\
Y_i(\eta)&=&-\tau_i \eta,
\end{eqnarray*}
 where $L_g$ is the diffeomorphism given by the left multiplication for $g \in \mathcal{G}$ and  $i=1,...,k$.
\end{theorem}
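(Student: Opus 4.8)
The plan is to reduce the claim to solving the two displayed first order systems for $B$ and $\eta$, and then to read off their integrability conditions from the assumption that $K$ is a Lie algebra. First I would specialize the push-forward formula \refeqn{equation_infinitesimal_SDE4} to a transformation of the form $T=(Id_U,B,\eta)$, for which $\Phi=Id_U$ and hence $T_*(V_i)=\big(Y_i,\,Ad_B(C_i)+R_{B^{-1}*}(Y_i(B)),\,\tau_i+Y_i(\eta)\eta^{-1}\big)$. Using $Ad_B=R_{B^{-1}*}\circ L_{B*}$ and the invertibility of $R_{B^{-1}*}$, the $\mathfrak{g}$-component vanishes exactly when $Y_i(B)=-L_{B*}(C_i)$, while the last component vanishes exactly when $Y_i(\eta)=-\tau_i\eta$. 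Thus $T$ makes all $T_*(V_i)$ strong if and only if $B,\eta$ solve precisely the equations in the statement, and the whole problem becomes the construction of such $B,\eta$ near $x_0$.

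Next I would exploit that $K$ is a Lie algebra by writing $[V_i,V_j]=\sum_l c_{ij}^l V_l$ with constants $c_{ij}^l$ and comparing the three components through \refeqn{equation_infinitesimal_SDE5}, which yields
\[
[Y_i,Y_j]=\sum_l c_{ij}^l Y_l,\qquad Y_i(C_j)-Y_j(C_i)-\{C_i,C_j\}=\sum_l c_{ij}^l C_l,\qquad Y_i(\tau_j)-Y_j(\tau_i)=\sum_l c_{ij}^l \tau_l .
\]
The first relation, with the independence of $Y_1(x_0),\dots,Y_k(x_0)$, shows that $D_Y=\spann\{Y_1,\dots,Y_k\}$ is involutive of constant rank $k$ near $x_0$. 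Since the equations for $B,\eta$ only prescribe derivatives along the $Y_i$, which are tangent to the leaves of $D_Y$, I would use the Frobenius theorem to pass to straightening coordinates and solve the equations leaf by leaf, prescribing $B=1_{\mathcal{G}}$ and $\eta=1$ on a transversal through $x_0$; on each leaf $Y_1,\dots,Y_k$ form a frame, so the derivatives $Y_i(B),Y_i(\eta)$ pin down the full differentials.

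On a (contractible) leaf the two problems then decouple. For $\eta$ the equation is $d(\log\eta)=\omega_\eta$ with $\omega_\eta(Y_i)=-\tau_i$, whose closedness $d\omega_\eta(Y_i,Y_j)=-\big(Y_i(\tau_j)-Y_j(\tau_i)\big)+\sum_l c_{ij}^l\tau_l=0$ is exactly the third relation above, so the Poincar\'e lemma produces $\log\eta$ and hence $\eta=\exp(\log\eta)>0$. For $B$ the equation $Y_i(B)=-L_{B*}(C_i)$ is the condition that the left logarithmic derivative $B^{-1}dB$ equal the $\mathfrak{g}$-valued form $\omega_B$ with $\omega_B(Y_i)=-C_i$; its Maurer--Cartan defect $d\omega_B+\tfrac12[\omega_B,\omega_B]$ evaluated on $(Y_i,Y_j)$ equals $-\big(Y_i(C_j)-Y_j(C_i)-\{C_i,C_j\}\big)+\sum_l c_{ij}^l C_l$, which vanishes by the second relation. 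I would then invoke the fundamental theorem on integrating a Lie-algebra-valued $1$-form satisfying the Maurer--Cartan equation into a group-valued map on a simply connected domain to obtain $B$.

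The main obstacle I anticipate is this last step: turning the algebraic fact that $K$ is closed under the bracket into the two analytic integrability conditions and, above all, integrating $\omega_B$ into an honest map $B$ into $\mathcal{G}$, since the nonabelian Maurer--Cartan equation is what carries the Lie-bracket term $\{C_i,C_j\}$. By contrast, the reduction to a single leaf and the smooth dependence on the transverse coordinates (giving $B,\eta$ on a genuine neighbourhood $U$) are routine once the Frobenius picture is set up, and the verification that the resulting $T=(Id_U,B,\eta)$ makes $T_*(V_1),\dots,T_*(V_k)$ strong is just the equivalence established in the first step.
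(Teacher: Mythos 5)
Your proposal is correct, but it takes a genuinely different route from the paper, because the paper in fact contains no self-contained argument: its entire proof is a citation of \cite{DMU1} (where the statement is proved for $\mathcal{G}=SO(n)$) together with the remark that nothing in that proof uses more than the Lie group structure of $SO(n)$. You supply the missing argument in full: the specialization of the push-forward formula \refeqn{equation_infinitesimal_SDE4} to $\Phi=Id_U$, which shows (using $Ad_B=R_{B^{-1}*}\circ L_{B*}$ and invertibility of $R_{B^{-1}*}$) that strongness of all $T_*(V_i)$ is \emph{equivalent} to the two displayed systems; the extraction of the three componentwise identities from $[V_i,V_j]=\sum_l c_{ij}^l V_l$ via \refeqn{equation_infinitesimal_SDE5} (note that you silently, and correctly, repair the typo there: the middle component should read $Y_1(C_2)-Y_2(C_1)-\{C_1,C_2\}$); Frobenius straightening of the involutive distribution $\spann\{Y_1,\dots,Y_k\}$; the Poincar\'e lemma for $d(\log\eta)=\omega_\eta$, whose closedness is exactly the $\tau$-identity; and the integration of the $\mathfrak{g}$-valued form $\omega_B$ into a map $B:U\rightarrow\mathcal{G}$ by the Darboux/Maurer--Cartan theorem on a simply connected plaque, whose integrability condition $d\omega_B+\tfrac{1}{2}[\omega_B,\omega_B]=0$ is exactly the $C$-identity. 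Both integrability computations are right, signs included. What your route buys is precisely a verification of the claim the paper leaves implicit: every tool you invoke (Frobenius, Poincar\'e, integration of a Maurer--Cartan form) exists for an arbitrary Lie group $\mathcal{G}$, so the generalization beyond $SO(n)$ becomes transparent rather than asserted. The one step you defer as routine, smooth dependence of $B,\eta$ on the transverse coordinates, is indeed standard; alternatively it can be absorbed by running Frobenius once on $U\times\mathcal{G}\times\mathbb{R}_+$ for the lifted vector fields of \refeqn{equation_infinitesimal_SDE2}, whose involutivity is equivalent to your three identities, and reading $(B,\eta)$ off the integral leaf through $(x_0,1_{\mathcal{G}},1)$ as a graph. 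No gap remains.
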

\begin{proof}
The proof of this theorem, for the   case $\mathcal{G}=SO(n)$, can be found in \cite{DMU1}. Since the proof in \cite{DMU1} does not ever use the
specific group properties of $SO(n)$ but  only the fact that  $SO(n)$ is a Lie group, the
 proof given in \cite{DMU1} holds also in our case.
${}\hfill$\end{proof}\\

Theorem \ref{theorem_infinitesimal_SDE1} plays a very important role in the applications of the symmetry analysis to concrete SDEs. For example,
in the case of SDEs driven by Brownian motion it permits to apply the reduction and reconstruction procedure, using the existence of strong
infinitesimal symmetries, in the general case (see \cite{DMU2}). We give an
 example of application of Theorem \ref{theorem_infinitesimal_SDE1} in Subsection \ref{subsection_example} and we plan to provide further applications  in a forthcoming paper.

\subsection{Symmetries of an SDE with jumps}

\begin{definition}\label{definition_symmetries}
A stochastic transformation $T \in S_{\mathcal{G}}(M)$ is a symmetry of the SDE $\Psi$ if,
for any process $(X,Z)$ of class $\mathcal{C}$ solution to the SDE $\bar{\Psi}$,  also $P_T(X,Z)$ is a solution to the SDE $\Psi$.\\
An infinitesimal stochastic transformation $V \in V_{\mathcal{G}}(M)$ is a symmetry of the SDE $\Psi$ if the one-parameter group of
stochastic transformations $T_a$ generated by $V$ is a group of symmetry of the SDE $\Psi$.
\end{definition}

\begin{remark}
We can give also a local version of Definition \ref{definition_symmetries}: a stochastic transformation $T \in S_{\mathcal{G}}(U,U')$, where
$(U,U')$ are two open sets of $M$, is a symmetry of $\Psi$ if $P_T$ transforms solutions to $\Psi|_{U}$ into solutions to
$\Psi|_{U'}$.\\
In this case it is necessary to stop the solution process $X$ and the driving semimartingale $Z$ with respect to a suitably adapted  stopping
time.
\end{remark}

\begin{theorem}\label{theorem_symmetry1}
A sufficient condition for a stochastic transformation $T\in S_\mathcal{G}(M)$ to be a symmetry of the SDE $\Psi$ is that $E_T(\Psi)=\Psi$.
\end{theorem}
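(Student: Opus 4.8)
The plan is to deduce the statement directly from Theorem \ref{theorem_symmetry3}, which already establishes that the operator $P_T$ carries class $\mathcal{C}$ solutions of $\Psi$ to class $\mathcal{C}$ solutions of the transformed SDE $E_T(\Psi)$. The hypothesis $E_T(\Psi)=\Psi$ is then exactly what is needed to turn this transport property into the invariance required by Definition \ref{definition_symmetries}. So the argument is essentially a substitution, once Theorem \ref{theorem_symmetry3} is in hand.

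Concretely, I would start by taking an arbitrary process $(X,Z)$ of class $\mathcal{C}$ that is a solution to the canonical SDE $\Psi$. By Theorem \ref{theorem_symmetry3}, the transformed pair $P_T(X,Z)$ is again a process of class $\mathcal{C}$ and it is a solution to the canonical SDE $E_T(\Psi)$. This last assertion relies on both the gauge symmetry of $Z$ under $\Xi_g$ and the time symmetry under $\Gamma_r$, which guarantee that the driving noise $Z'$ appearing in $P_T(X,Z)$ again belongs to the same class $\mathcal{C}$, together with Theorem \ref{theorem_geometrical1}, Theorem \ref{theorem_gauge1} and Lemma \ref{lemma_time2}, which identify the SDE solved by $P_T(X,Z)$ as $E_T(\Psi)$.

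Now I would invoke the hypothesis $E_T(\Psi)=\Psi$. Substituting this equality into the conclusion of the previous step shows that $P_T(X,Z)$ is a class $\mathcal{C}$ solution of $\Psi$ itself. Since $(X,Z)$ was an arbitrary class $\mathcal{C}$ solution of $\Psi$, this is precisely the condition in Definition \ref{definition_symmetries} for $T$ to be a symmetry of $\Psi$, which completes the argument.

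I expect no serious obstacle here: all the analytic work---the It\^o-type transformation rules for c\`adl\`ag semimartingales, the preservation of the law under the gauge and time actions, and the explicit identification of the transformed coefficients---has already been absorbed into Theorem \ref{theorem_symmetry3} and into the very construction of the map $E_T$. The only point that deserves a word of care is the class-$\mathcal{C}$ bookkeeping: one must confirm that $P_T(X,Z)$ has a driving semimartingale with the same law as the original one, but this is exactly what the gauge and time symmetry hypotheses on the members of $\mathcal{C}$ provide, and it is already recorded in the first part of the proof of Theorem \ref{theorem_symmetry3}. Thus the proof reduces to citing that theorem and applying the stated equality $E_T(\Psi)=\Psi$.
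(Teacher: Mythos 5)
Your proof is correct and follows exactly the same route as the paper, whose entire proof of this theorem is the single sentence that it is an easy application of Theorem \ref{theorem_symmetry3}. You have simply spelled out that application---transporting a class $\mathcal{C}$ solution via $P_T$, then substituting $E_T(\Psi)=\Psi$---which is precisely the intended argument.
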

\begin{proof}
This is an easy application of Theorem \ref{theorem_symmetry3}.
${}\hfill$\end{proof}\\

A natural question arising from  previous discussion is whether the condition in Theorem \ref{theorem_symmetry1} is also necessary. Unfortunately,
 even for Brownian motion driven SDEs there are counterexamples
(see \cite{DMU1} where the determining equations for symmetries of Brownian-motion-driven SDEs are different from the equations found here). The reason for this fact is
that, for a general law of the driving semimartingale in the class $\mathcal{C}$, it is possible to find two different canonical SDEs $\Psi \not
= \Psi'$ with the same set of solutions of class $\mathcal{C}$, i.e. any solution $(X,Z)$ of $\Psi$
is also a solution of $\Psi'$ and viceversa. \\
Exploiting this fact it is possible to find sufficient conditions in order to prove  the converse of Theorem \ref{theorem_symmetry1}.\\
In  the following we say that a semimartingale $Z$ in the class $\mathcal{C}$ and with characteristic triplet $(b,A,\nu)$ has \emph{jumps of
any size} if the support of $\nu$ is all $N \times \mathbb{R}_+$ with positive probability.

\begin{lemma}\label{lemma_full}
Given a semimartingale $Z$ in the class $\mathcal{C}$ with jumps of any size and such that the stopping time $\tau$ of the first jump
 is almost surely strictly positive,  if $(X,Z)$ is a solution to both the SDEs $\Psi$ and $\Psi'$ such that $X_0=x_0 \in M$ almost surely,  then
$\Psi(x_0,z)=\Psi'(x_0,z)$ for any $z \in N$.
\end{lemma}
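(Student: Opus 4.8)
The plan is to concentrate everything at the first jump time $\tau$ of $Z$ and then exploit the arbitrariness of the jump size. First I would observe that, since $Z$ is continuous on $[0,\tau)$, equation \refeqn{equation_manifold1} forces any solution of a geometrical SDE driven by $Z$ to be continuous on $[0,\tau)$ as well, because the jumps of $X$ occur only at the jumps of $Z$. Reading off the jump of $X$ at $\tau$ from \refeqn{equation_manifold1} — the continuous drift and bracket terms do not jump, and the jump of the $dZ^{\alpha}$ integral cancels the term $-\partial_{z'^{\alpha}}(\overline{\Psi}^i)\Delta Z^{\alpha}_{\tau}$ — one finds $X_{\tau}=\overline{\Psi}(X_{\tau_-},Z_{\tau},Z_{\tau_-})$. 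For a canonical SDE $\overline{\Psi}(x,z',z)=\Psi(x,z'\cdot z^{-1})$, so $X_{\tau}=\Psi(X_{\tau_-},\Delta Z_{\tau})$, and the same identity holds with $\Psi'$. Since $X$ is common to both SDEs, this yields the almost sure identity $\Psi(X_{\tau_-},\Delta Z_{\tau})=\Psi'(X_{\tau_-},\Delta Z_{\tau})$.

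Next I would turn this random identity into a pointwise one. Fixing an embedding $i_1:M\rightarrow\mathbb{R}^{k_M}$, set $g=i_1\circ\Psi-i_1\circ\Psi'$, a continuous map on $M\times N$. The first step gives $g(X_{\tau_-},\Delta Z_{\tau})=0$ almost surely, so $g$ vanishes on the support of the law $\lambda$ of the $M\times N$-valued random variable $(X_{\tau_-},\Delta Z_{\tau})$. Consequently it is enough to show that $(x_0,z)\in\operatorname{supp}(\lambda)$ for every $z\in N$; continuity of $g$ then gives $g(x_0,z)=0$, i.e. $\Psi(x_0,z)=\Psi'(x_0,z)$.

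It remains to prove the support statement, which is the crux and where both hypotheses are used. Fix $z\in N$, $\delta>0$, a neighbourhood $U\ni z$ of diameter less than $\delta$, and the exit time $\sigma_{\delta}=\inf\{t:\ |i_1(X_t)-i_1(x_0)|\ge\delta\}$; since $X$ is c\acc{a}dl\acc{a}g with $X_0=x_0$, $\sigma_{\delta}>0$ almost surely, and on $\{\tau<\sigma_{\delta}\}$ continuity of $X$ on $[0,\tau)$ forces $X_{\tau_-}$ into the closed $\delta$-ball about $x_0$. Hence $\{\tau<\sigma_{\delta},\ \Delta Z_{\tau}\in U\}$ is contained in $\{(X_{\tau_-},\Delta Z_{\tau})\in\bar{B}(x_0,\delta)\times U\}$, and the claim reduces to showing the former has positive probability. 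Here the jumps-of-any-size hypothesis enters: the support of $\nu$ being all of $\mathbb{R}_+\times N$ on a positive-probability event means $\nu$ charges $(0,s_0)\times U$ for every $s_0>0$, and since $\nu$ is the compensator of $\mu^Z$ this gives positive probability of a jump with mark in $U$ before time $s_0$; the hypothesis $\tau>0$ a.s. lets one take $s_0$ small enough that $\mathbb{P}(\sigma_{\delta}>s_0)$ is near $1$, forcing such a jump to be the first and to occur before $\sigma_{\delta}$. Letting $\delta\to0$ then yields $(x_0,z)\in\operatorname{supp}(\lambda)$. The delicate point — the main obstacle — is exactly this last assertion that the \emph{first} jump, rather than merely some jump, can be placed in an arbitrary small neighbourhood of $z$ at an arbitrarily small time: I would control it by letting $s_0\downarrow0$, so that the probability of two or more jumps in $(0,s_0)$ becomes negligible relative to that of a single jump landing in $(0,s_0)\times U$, whence conditionally the first jump has its mark in $U$.
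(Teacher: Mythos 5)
Your strategy --- extract the pathwise jump identity $X_t=\Psi(X_{t_-},\Delta Z_t)=\Psi'(X_{t_-},\Delta Z_t)$, read it as saying that $g=i_1\circ\Psi-i_1\circ\Psi'$ vanishes on the support of the law of the pre-jump pair, and then show $(x_0,z)$ lies in that support --- is genuinely different from the paper's proof, which never localizes at realized jumps: it computes the predictable compensator of $H^{h,f}_t=\sum_{s\le t}h(\Delta f(X_s))$ in two ways (through $\Psi$ and through $\Psi'$), invokes uniqueness of the compensator, and reads off the identity from the full support of $\nu$ and continuity of $t\mapsto X_{t_-}$ before $\tau$. However, the step you yourself single out as the crux contains a genuine gap. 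The claim that \emph{the probability of two or more jumps in $(0,s_0)$ becomes negligible relative to that of a single jump landing in $(0,s_0)\times U$} is a Poisson/L\'evy-type asymptotic which is false for general semimartingales of class $\mathcal{C}$, whose compensators may be strongly time-inhomogeneous. For instance, on $N=\mathbb{R}^2$ let $Z$ be a pure-jump process with independent increments and compensator $\nu(dt,dz')=\lambda_1(dz')\,dt+t^{-1}(\log(1/t))^{-2}\,\delta_w(dz')\,dt$, where $\lambda_1$ is a finite measure with full support and $w\notin U$: every hypothesis of the lemma concerning $Z$ holds (the singular part is integrable at $0$, so the first jump time is a.s. strictly positive, and $\operatorname{supp}\nu=\mathbb{R}_+\times N$), yet $\mathbb{P}(\text{at least two jumps in }(0,s_0))\asymp(\log(1/s_0))^{-2}$, which dominates $\mathbb{P}(\text{some jump in }(0,s_0)\times U)\asymp\lambda_1(U)\,s_0$ as $s_0\downarrow 0$. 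So the comparison you propose cannot close the argument, even though the conclusion it was meant to support is in fact true.

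The gap is repairable, and the repair shows the heuristic is unnecessary. Since the jump identity holds a.s. at \emph{every} jump time, not just the first, it suffices to produce, with positive probability, some jump before $\sigma_\delta$ with mark in $U$; for this, use the defining property of the compensator on predictable sets: $\mathbb{E}[\mu^Z(A)]=\mathbb{E}[\nu(A)]$ with $A=\,]]0,\sigma_\delta\wedge s]]\times U$, which is predictable because $\sigma_\delta$ is a stopping time. As $\sigma_\delta\wedge s>0$ a.s. and $\nu$ charges $(0,r)\times U$ for every $r>0$ on an event of positive probability, the right-hand side is strictly positive, hence so is the left-hand side. If you insist on the first jump, apply the same identity with $A=\,]]0,\tau\wedge\sigma_\delta\wedge s]]\times U$: since the only jump in $]]0,\tau]]$ occurs at $\tau$, the left-hand side is exactly $\mathbb{P}(\tau\le\sigma_\delta\wedge s,\ \Delta Z_\tau\in U)$. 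Note that this compensator identity (Theorem \ref{theorem_characteristic1}, $\nu$ being the predictable projection of $\mu^Z$) is precisely the probabilistic tool on which the paper's own proof rests; once it replaces your asymptotic comparison, your support-of-the-law route becomes a correct alternative to the paper's uniqueness-of-compensator argument.
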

\begin{proof}
Consider the semimartingale $S^f_t=f(X_t)$, where $f \in \cinf(M)$ is a bounded smooth function. Given  a bounded smooth function $h \in
\cinf(\mathbb{R})$ such that $h(x)=0$ for $x $ in a neighbourhood of $0$, we define the (special) semimartingale
$$H^{h,f}_t=\sum_{0 \leq s \leq t}h(\Delta S^f_s).$$
Since the jumps $\Delta S^f_t$ of $S^f$ are exactly $\Delta S^f_t=f(\Psi(X_{t_-},\Delta Z_t))-f(X_{t_-})$ or, equivalently, $\Delta
S^f_t=f(\Psi'(X_{t_-},\Delta Z_t))-f(X_{t_-})$, we have that \small
$$H^{h,f}_t=\int_{N \times [0,t]}{h(f(\Psi(X_{s_-},
z))-f(X_{s_-}))\mu^Z(ds,dz)}=\int_{N \times [0,t]}{h(f(\Psi'(X_{s_-}, z))-f(X_{s_-}))\mu^Z(ds,dz)}.$$ \normalsize
Since $H^{h,f}$ is a special
semimartingale there exists a unique (up to $\mathbb{P}$ null sets) predictable process $R^{h,f}$ of bounded variation such that
$H^{h,f}_t-R^{h,f}_t$ is a local martingale. By the definition of characteristic measure $\nu$  it is simple to prove that
\small
$$R^{h,f}_t=\int_{N \times [0,t]}{h(f(\Psi(X_{s_-}, z))-f(X_{s_-}))\nu(ds,dz)}=\int_{N \times [0,t]}{h(f(\Psi'(X_{s_-}, z))-f(X_{s_-}))\nu(ds,dz)}.$$
\normalsize This means that
$$\int_{N \times [0,t]}{(h(f(\Psi(X_{s_-}, z))-f(X_{s_-}))-h(f(\Psi'(X_{s_-}, z))-f(X_{s_-})))\nu(ds,dz)}$$
is a semimartingale almost surely equal to $0$. Since $X_{t_-}$ is a continuous function for $t\leq \tau$ and  the support of $\nu$ is all $N
\times \mathbb{R}_+$, in a set of positive measure, there exists a set of positive probability such that $h(f(\Psi(X_{t_-},
z))-f(X_{t_-}))-h(f(\Psi'(X_{t_-}, z))-f(X_{t_-}))=0$ for any $z \in N$. Taking the limit $t \rightarrow 0$ we obtain $h(f(\Psi(x_0,
z))-f(x_0))=h(f(\Psi'(x_0, z))-f(x_{0}))$. Since $h,f$ are generic functions, we deduce that $\Psi(x_0,z)=\Psi'(x_0,z)$ for any $z \in N$.
${}\hfill$\end{proof}

\begin{theorem}\label{theorem_full}
In the hypotheses of Lemma \ref{lemma_full}, a stochastic transformation $T \in S_{\mathcal{G}}(M)$ is a symmetry of an SDE $\Psi$ if and only if
$E_T(\Psi)=\Psi$.
\end{theorem}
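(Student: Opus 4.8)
The plan is to prove the two implications separately, noting that one of them is already available. The direction $E_T(\Psi)=\Psi \Rightarrow T \text{ is a symmetry}$ is exactly Theorem \ref{theorem_symmetry1}, so the only thing left to establish is the converse: if $T=(\Phi,B,\eta)\in S_{\mathcal{G}}(M)$ is a symmetry of $\Psi$, then $E_T(\Psi)=\Psi$. Since $E_T(\Psi)$ and $\Psi$ are both canonical SDEs on $M$, it suffices to show that $\Psi(x_0,z)=E_T(\Psi)(x_0,z)$ for every fixed $x_0\in M$ and every $z\in N$, and then let $x_0$ range over all of $M$.

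First I would, for a fixed but arbitrary $x_0\in M$, manufacture a single pair that simultaneously solves $\Psi$ and $E_T(\Psi)$ and starts at $x_0$. To do this I would pick a driving semimartingale $Z$ in the class $\mathcal{C}$ meeting the hypotheses of Lemma \ref{lemma_full} (jumps of any size and first-jump time almost surely strictly positive) and, invoking Theorem \ref{theorem_manifold1}, take the solution $(X,Z)$ of the canonical SDE $\Psi$ with deterministic initial condition $X_0=\Phi^{-1}(x_0)$. Setting $(X',Z')=P_T(X,Z)$, the symmetry hypothesis together with Definition \ref{definition_symmetries} gives that $(X',Z')$ solves $\Psi$, while Theorem \ref{theorem_symmetry3} gives that $(X',Z')$ solves $E_T(\Psi)$. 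Thus $(X',Z')$ is a common solution of the two SDEs, and evaluating the defining formula $X'=\Phi[H_{\beta^{\eta}}(X)]$ at $t=0$, where $\beta^{\eta}_0=0$ forces $\alpha_0=0$, yields $X'_0=\Phi(X_0)=x_0$ almost surely.

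Next I would verify that $(X',Z')$ genuinely satisfies the standing assumptions of Lemma \ref{lemma_full}. Theorem \ref{theorem_symmetry3} guarantees that $P_T(X,Z)$ is again a process of class $\mathcal{C}$, so $Z'$ has the same law as $Z$ on $\mathcal{D}([0,T],N)$; in particular $Z'$ inherits from $Z$ both the ``jumps of any size'' property and the almost sure strict positivity of its first-jump time. Applying Lemma \ref{lemma_full} with $\Psi'=E_T(\Psi)$ and initial point $x_0$ then produces $\Psi(x_0,z)=E_T(\Psi)(x_0,z)$ for all $z\in N$, and since $x_0\in M$ was arbitrary this is precisely $\Psi=E_T(\Psi)$, completing the converse and hence the equivalence.

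The bookkeeping around $P_T$ and $E_T$ is routine, and the realizability of an arbitrary base point $x_0$ as $X'_0$ reduces to the invertibility of $\Phi$ used in the choice $X_0=\Phi^{-1}(x_0)$. The point that deserves real care, and which I expect to be the main obstacle, is the transfer of the two hypotheses of Lemma \ref{lemma_full} from $Z$ to the transformed noise $Z'$: one must be sure that the combined action $H_{\beta^{\eta}}\{\Xi_{B(X_t)}[\Gamma_{\eta(X_t)}(\cdot)]\}$ preserves the entire law of $Z$ within $\mathcal{C}$, so that neither the gauge map $\Xi_{B(X_t)}$, nor the time-symmetry map $\Gamma_{\eta(X_t)}$, nor the random time change $H_{\beta^{\eta}}$ can destroy the full support of $\nu$ or push the first jump to time $0$. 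This is exactly where the defining properties of the gauge symmetry group and the time symmetry (Definitions \ref{definition_gauge} and \ref{definition_time}) enter, ensuring $Z'$ stays in the same class $\mathcal{C}$ as $Z$.
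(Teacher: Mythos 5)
Your proof is correct and follows essentially the same route as the paper's: the forward implication is Theorem \ref{theorem_symmetry1}, and the converse applies $P_T$ to a solution with deterministic initial condition, invokes the symmetry hypothesis together with Theorem \ref{theorem_symmetry3} to obtain a common solution of $\Psi$ and $E_T(\Psi)$, and concludes by Lemma \ref{lemma_full}. The only (cosmetic) difference is that you start the solution at $\Phi^{-1}(x_0)$ so the identity $\Psi(x_0,\cdot)=E_T(\Psi)(x_0,\cdot)$ is obtained directly at $x_0$, whereas the paper starts at $x_0$ and then uses that $\Phi$ is a diffeomorphism; your explicit check that $Z'$ inherits the hypotheses of Lemma \ref{lemma_full} because they are properties of the law is a worthwhile detail the paper leaves implicit.
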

\begin{proof}
The if part is exactly Theorem \ref{theorem_symmetry1}. \\
Conversely, suppose  that $T$ is a symmetry of $\Psi$ and put $\Psi'=E_T(\Psi)$. If $X^{x_0}$ denotes  the unique solution to the SDE $\Psi$
driven by the semimartingale $Z$ such that $X^{x_0}=x_0$ almost surely, put $(X',Z')=E_T(X^{x_0},Z)$.  By definition of symmetry $(X',Z')$ is a
solution to $\Psi$ and, by Theorem \ref{theorem_symmetry3}, it is a solution to $\Psi'$. Since $X'_0=\Phi(x_0)$ almost surely, using Lemma
\ref{lemma_full} we obtain that $\Psi(\Phi(x_0),z)=\Psi'(\Phi(x_0),z)$. Since $\Phi$ is a diffeomorphism and $x_0 \in M$ is a generic point this
concludes the proof.
${}\hfill$\end{proof}

\begin{remark}
We propose here two possible generalizations of Theorem \ref{theorem_full}\\
First we can suppose that $Z$ is a purely discontinuous semimartingale and that $b^{\alpha}_t=A^{\alpha\beta}_t=0, \forall t \geq 0$ with Hunt
functions $h^{\alpha}=0$. In this case, if  the support of $\nu$ is $J \times \mathbb{R}_+$ almost surely,  the stochastic transformation  $T$
is a symmetry of the SDE $\Psi$ if and only if $E_T(\Psi)(x,z)=\Psi(x,z)$ for any $z \in J$. The proof of the necessity of the condition is
equal to the one in Lemma \ref{lemma_full} and Theorem \ref{theorem_full}, instead the proof of the sufficiency part is essentially based on the
fact that $Z$ is a pure jump process.
This case includes, for example, the Poisson process.\\
The second generalization covers the important case of continuous semimartingales. An example of the theorem which could be obtained in this
case is Theorem 17 in \cite{DMU1} that, in our language,   can be reformulated as follows: $T$ is a symmetry of $\Psi$ driven by a Brownian
motion $Z^2,...,Z^m$ and by the time $Z^1_t=t$ if and only if $\partial_{z^{\alpha}}(\Psi)(x,0)=\partial_{z^{\alpha}}(E_T(\Psi))(x,0)$ for
$\alpha=2,...,m$ and $\partial_{z^1}(\Psi)(x,0)+\frac{1}{2}\sum_{\alpha=2}^m
\partial_{z^{\alpha}z^{\alpha}}(\Psi)(x,0)=\partial_{z^1}(E_T(\Psi))(x,0)
+\frac{1}{2}\sum_{\alpha=2}^m \partial_{z^{\alpha}z^{\alpha}}(E_T(\Psi))(x,0)$. \\
\end{remark}

In order to provide an explicit formulation of the \emph{determining equations} for the infinitesimal symmetries of an SDE $\Psi$, we prove the
following proposition.

\begin{proposition}\label{proposition_determining}
A sufficient condition for an infinitesimal stochastic transformation $V$, generating a one-parameter group $T_a$ of stochastic transformations,
to be an infinitesimal symmetry of an SDE $\Psi$ is that
\begin{equation}\label{equation_determining1}
\partial_a(E_{T_a}(\Psi))|_{a=0}=0.
\end{equation}
When the hypotheses of Theorem \ref{theorem_full} hold, condition \refeqn{equation_determining1} is also necessary.
\end{proposition}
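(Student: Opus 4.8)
The plan is to reduce the sufficiency claim to the finite condition $E_{T_a}(\Psi)=\Psi$ for \emph{every} $a$: once this is known, Theorem \ref{theorem_symmetry1} immediately gives that each $T_a$ is a symmetry of $\Psi$, and hence, by Definition \ref{definition_symmetries}, that $V$ is an infinitesimal symmetry. The whole argument rests on the fact that $E$ is a genuine (left) action of the group $S_{\mathcal{G}}(M)$ on the space of canonical SDEs, namely $E_{T'\circ T}=E_{T'}\circ E_T$, with $\circ$ the composition \refeqn{equation_symmetry1}. Granting this, the infinitesimal condition \refeqn{equation_determining1} will propagate to all $a$ by a short flow argument.

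First I would verify the action property directly from the defining formula of $E_T$. Writing $y=\Phi^{-1}(x)$, one has $E_T(\Psi)(x,z)=\Phi\big(\Psi(y,(\Gamma_{\eta(y)^{-1}}\circ\Xi_{B(y)^{-1}})(z))\big)$. Composing with $T'=(\Phi',B',\eta')$ and setting $v=(\Phi'\circ\Phi)^{-1}(x)$ and $w=\Phi(v)$, the commutativity assumption \eqref{pippo} lets me interchange the $\Gamma$ and $\Xi$ factors, while the action laws $\Gamma_r\circ\Gamma_s=\Gamma_{rs}$ and $\Xi_g\circ\Xi_h=\Xi_{gh}$ collapse the two inner transformations into $\Gamma_{((\eta'\circ\Phi)\eta)(v)^{-1}}\circ\Xi_{((B'\circ\Phi)\cdot B)(v)^{-1}}$. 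Since $(\Phi'\circ\Phi,(B'\circ\Phi)\cdot B,(\eta'\circ\Phi)\eta)$ is exactly $T'\circ T$ by \refeqn{equation_symmetry1}, this yields $E_{T'}(E_T(\Psi))=E_{T'\circ T}(\Psi)$. This computation, though elementary, is the real content of the proposition and is precisely where the commutativity hypothesis \eqref{pippo} is used; I expect it to be the main obstacle, since it is what makes $a\mapsto E_{T_a}$ a one-parameter group of transformations.

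Next, write $\Psi_a:=E_{T_a}(\Psi)$. For each fixed $(x,z,k)$ the value $E_{T_a}(\Psi')(x,z,k)=\Phi_a(\Psi'(x',z',k))$ depends on $\Psi'$ only through its value at the single point $(x',z',k)$, where $x'=\Phi_a^{-1}(x)$ and $z'=(\Gamma_{\eta_a(x')^{-1}}\circ\Xi_{B_a(x')^{-1}})(z)$. Using the action property together with $T_{a+\epsilon}=T_a\circ T_\epsilon$ one gets $\Psi_{a+\epsilon}=E_{T_a}(\Psi_\epsilon)$, so by the chain rule $\partial_a\Psi_a(x,z,k)=D\Phi_a\cdot\partial_\epsilon\Psi_\epsilon(x',z',k)|_{\epsilon=0}$. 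The hypothesis \refeqn{equation_determining1} states exactly that $\partial_\epsilon\Psi_\epsilon|_{\epsilon=0}=0$ at \emph{every} point, so the right-hand side vanishes and $\partial_a\Psi_a\equiv 0$ for all $a$. Integrating in $a$ (everything is smooth in $a$) gives $\Psi_a=\Psi_0=\Psi$ for all $a$, and Theorem \ref{theorem_symmetry1} concludes the sufficiency.

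Finally, for the converse under the hypotheses of Theorem \ref{theorem_full}: if $V$ is an infinitesimal symmetry then each $T_a$ is a symmetry of $\Psi$, and Theorem \ref{theorem_full} upgrades this to the identity $E_{T_a}(\Psi)=\Psi$ for all $a$. Differentiating in $a$ at $a=0$ gives $\partial_a(E_{T_a}(\Psi))|_{a=0}=0$, which is \refeqn{equation_determining1}. Thus, apart from the action-property verification of the second paragraph, the passage from the infinitesimal level to finite $a$ is the routine chain-rule argument above.
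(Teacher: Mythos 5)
Your proof is correct, but your route through the sufficiency part is genuinely different from the paper's. The paper sets $\tilde{\Psi}(a,x,z)=E_{T_a}(\Psi)(x,z)$ and observes (using the group property implicitly) that it solves an autonomous semilinear first-order evolution PDE $\partial_a\tilde{\Psi}=\mathcal{L}(\tilde{\Psi})+F(\tilde{\Psi},x,z)$ in the parameter $a$; since \refeqn{equation_determining1} says exactly that the initial datum $\Psi$ is a stationary solution of this PDE, uniqueness of the Cauchy problem (justified by the method of characteristics, with reference to earlier work) forces $E_{T_a}(\Psi)=\Psi$ for all $a$, and Theorem \ref{theorem_symmetry1} concludes. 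You avoid any appeal to PDE well-posedness: writing $T_{a+\epsilon}=T_a\circ T_\epsilon$, proving the action property $E_{T'\circ T}=E_{T'}\circ E_T$ explicitly (which indeed rests on the commutativity hypothesis \eqref{pippo}, the group action laws, and the composition formula \refeqn{equation_symmetry1}; the paper never isolates this step), and exploiting the fact that $E_{T_a}$ acts pointwise --- the value of $E_{T_a}(\Psi')$ at $(x,z)$ depends on $\Psi'$ only through its value at one transformed point --- you get $\partial_a\Psi_a(x,z)=D\Phi_a\cdot\partial_\epsilon\Psi_\epsilon(x',z')|_{\epsilon=0}=0$ directly, hence $\Psi_a\equiv\Psi$ by integration. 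Your argument is more elementary and self-contained; what the paper's formulation buys is the explicit identification of $\mathcal{L}$ and $F$, which is precisely what produces the coordinate form of the determining equations \refeqn{equation_determining2} stated immediately afterwards. The necessity part is identical in both: under the hypotheses of Theorem \ref{theorem_full} one has $E_{T_a}(\Psi)=\Psi$ for all $a$, and differentiation at $a=0$ gives \refeqn{equation_determining1}.
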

\begin{proof}
We prove that if equation \refeqn{equation_determining1} holds, then $E_{T_a}(\Psi)=\Psi$ for any $a \in \mathbb{R}$. Defining
$\tilde{\Psi}(a,x,z)=E_{T_a}(\Psi)$, the function $\tilde{\Psi}(a,x,z)$ solves a partial differential equation of the form
\begin{equation}\label{equation_determining3}
\partial_a(\tilde{\Psi}(a,x,z))=\mathcal{L}(\tilde{\Psi}(a,x,z))+F(\tilde{\Psi}(a,x,z),x,z),
\end{equation}
where $\mathcal{L}$ is a linear first order scalar differential operator in $\partial_x,\partial_z$ and $F$ is a smooth function. It is possible to prove,
exploiting standard techniques of characteristics for first order PDEs (see \cite{DM2016,DMU1}), that equation \refeqn{equation_determining3}
admits a unique local solution as evolution PDE in the time parameter $a$ for any smooth initial value $\tilde{\Psi}(0,x,z)$.\\
Since   $\tilde{\Psi}(0,x,z)=E_{T_0}(\Psi)(x,z)=\Psi(x,z)$ and
$\mathcal{L}(\Psi(x,z))+F(\Psi(x,z),x,z)=\partial_a(E_{T_a}(\Psi))|_{a=0}=0$, we have that $E_{T_a}(\Psi)(x,z)=\tilde{\Psi}(a,x,z)=\Psi(x,z)$. \\
The necessity of condition \refeqn{equation_determining1} under the hypotheses of Theorem \ref{theorem_full} is trivial since, by Theorem
\ref{theorem_full}, we must have $E_{T_a}(\Psi)=\Psi$.
${}\hfill$\end{proof}\\

In the following we use Proposition \ref{proposition_determining} to rewrite  equations \refeqn{equation_determining1} in any given  coordinate
systems $x^i$ on $M$ and  $z^{\alpha}$ on $N$. We denote by $K_1,...,K_r$ the vector fields  on $N$ generating the action $\Xi_g$ of
$\mathcal{G}$ on $N$ and by $H$ the vector field generating  the action $\Gamma_r$ of $\mathbb{R}_+$ on $N$. Using these  notations, with any
infinitesimal stochastic transformation $V=(Y,C,\tau)$ we associate a vector field $Y$ on $M$, a function $\tau$ and $r$ functions
$C^1(x),...,C^r(x)$
 which correspond to the components of $C$ with respect to the basis $K_1,...,K_r$ of generators of the action $\Xi_g$. In the chosen  coordinate systems on $M,N$
  the vector fields $Y$ and  $K_1,...,K_r,H$ are of the form
$$
Y=Y^i(x)\partial_{x^i} \ \  K_{\ell}=K_{\ell}^{\alpha}(z)\partial_{z^{\alpha}} \ \ H=H^{\alpha}(z)\partial_{z^{\alpha}}.
$$
Therefore, we can rewrite \refeqn{equation_determining1} as
\begin{equation}\label{equation_determining2}
Y^i(\Psi(x,z))-Y^j(x)\partial_{x^j}(\Psi^i)(x,z)-\tau(x) H^{\alpha}(z)\partial_{z^{\alpha}}(\Psi^i)(x,z)-C^{\ell}(x) K^{\alpha}_{\ell}(z)
\partial_{z^{\alpha}}(\Psi^i)(x,z)=0,
\end{equation}
where $\Psi^i(x,z)=x^i \circ \Psi$ and $i=1,...,m$.  In the literature of symmetries of deterministic differential equations,  equations
\refeqn{equation_determining2} are usually called \emph{determining equations} (see, e.g., \cite{Olver1993,Stephani1989}). It is however important to
note some differences with respect to the determining equations of ODEs or also of Brownian-motion-driven SDEs (see \cite{DMU1}). Indeed, in the
deterministic case and in the Brownian  motion case the determining equations are linear and local overdetermined first order differential
equations both in the infinitesimal transformation coefficients and in the equation coefficients. Instead equations
\refeqn{equation_determining2} are \emph{linear non-local} differential equations in the coefficients $Y^i,\tau,C^{\ell}$ of the infinitesimal
transformation $V$, and they are \emph{non-linear local} differential equations in the coefficient
$\Psi^i$ of the SDE. \\

\subsection{An example}\label{subsection_example}

In  order to give an idea of the generality and of the flexibility of our approach, we propose an example of an application of the previous theory.
Further  examples  of  SDEs interesting
for mathematical applications will be given in a forthcoming paper.\\
We consider $M=\mathbb{R}^2$, $N=GL(2) \times \mathbb{R}^2$ (with the natural multiplication), and the canonical SDE
\begin{equation}\label{equation_example}
\Psi(x,z_{(1)},z_{(2)})=z_{(1)} \cdot x+z_{(2)}.
\end{equation}
The SDE associated with $\Psi$ is an affine SDE and its solution $(X,Z)$ satisfies the following stochastic differential relation
\begin{equation}\label{equation_example1} dX^i_t=X^j_{t_-}(Z^{-1}_{(1)})^k_{j,t_-}{dZ^i_{k,(1),t}+dZ^i_{(2),t}},
\end{equation}
where $Z^{-1}_{(1)}$ is the inverse matrix of $Z_{(1)}$ and  $GL(2)$ is naturally embedded in the set of the two by two matrices. If we set
\begin{equation}\label{equation_example2}
\overline{Z}^i_{j,t}=\int_0^t{(Z^{-1}_{(1)})^k_{j,s_-}{dZ^i_{k,(1),s}}},
\end{equation}
equation \refeqn{equation_example1} becomes the most general equation affine both in the noises $\overline{Z},Z_{(2)}$ and in the unknown
process $X$. Furthermore, if the noises $Z_{(1)},Z_{(2)}$ are discrete time semimartingales (i.e. semimartingales with fixed time jumps at times
$n \in \mathbb{N}$) equation \refeqn{equation_example1} becomes $X_n=Z_{(1),n-1}^{-1} \cdot Z_{(1),n} \cdot X_{n-1}+Z_{(2),n}-Z_{(2),n-1}$, that
is an affine type iterated
random map (see Subsection \ref{subsection_iterated} and references therein). \\
The SDE $\Psi$ does not have strong symmetries, in the sense that, for general semimartingales $(Z_{(1)},Z_{(2)})$, equation \eqref{equation_example1} does not admit symmetries. \\
For this reason we suppose that the semimartingales $Z_{(1)},Z_{(2)}$ have the gauge symmetry  group $O(2)$ with the natural action
\begin{equation}\label{equation_example3}
\Xi_{B}(z_{(1)},z_{(2)})=(B \cdot z_{(1)} \cdot B^T, B \cdot z_{(2)}),
\end{equation}
where $B \in O(2)$.\\
In order to use the determining equation \refeqn{equation_determining2} for calculating the infinitesimal symmetries of the SDE $\Psi$, we need
to explicitly write the infinitesimal generator $K$ of the action $\Xi_B$ on $N$. In the standard coordinate system of $N$ we have that $\Xi_B$
is generated by
\begin{eqnarray*}
K&=&(-z^2_{1,(1)}-z^1_{2,(1)})\partial_{z^1_{1,(1)}}+(z^1_{1,(1)}-z^2_{2,(1)})\partial_{z^1_{2,(1)}}+(z^1_{1,(1)}-z^2_{2,(1)})\partial_{z^2_{1,(1)}}+\\
&&+(z^1_{2,(1)}+z^2_{1,(1)})\partial_{z^2_{2,(1)}}-z^2_{(2)}\partial_{z^1_{(2)}}+z^1_{(2)}\partial_{z^2_{(2)}}. \end{eqnarray*} If we set
$$R=\left(\begin{array}{cc}
0 & -1 \\
1 & 0 \end{array} \right) $$ we have that

\begin{eqnarray*}
K(z_{(1)})&=&R \cdot z_{(1)}+ z_{(1)} \cdot R^T\\
K(z_{(2)})&=&R \cdot z_{(2)},
\end{eqnarray*}
where the vector field $K$ is applied componentwise to the matrix $z_{(1)}$ and the vector $z_{(2)}$. Using this property of $K$ we can easily
prove that
$$V=(Y,C)=\left(-x^2\partial_{x^1}+x^1\partial_{x^2}, 1 \right),$$
(where $C=1$ is the component of the gauge symmetry with respect to the generator $K$) is a symmetry of the equation $\Psi$. Indeed, recalling
that $Y$ is a linear vector field whose components satisfy the relation
$$Y=R \cdot x,$$
we have that, in this case, the determining equations \refeqn{equation_determining2} read
\begin{eqnarray*}
Y \circ \Psi-Y(\Psi)-C(x)K(\Psi)&=& R \cdot (z_{(1)} \cdot x+z_{(2)} )-z_{(1)} \cdot (R \cdot x)- K(\Psi)\\
&=&R \cdot (z_{(1)} \cdot x+z_{(2)} )+z_{(1)} \cdot R^T  \cdot x- (R \cdot z_{(1)}+ z_{(1)} \cdot R^T) \cdot x+\\
&&-R \cdot z_{(2)}=0.
\end{eqnarray*}
Since $V$ satisfies the determining equations \refeqn{equation_determining2}, $V$ is an infinitesimal symmetry of $\Psi$. The infinitesimal
stochastic transformation $V$ generates a one-parameter group of symmetries of $\Psi$ given by
$$T_a=(\Phi_a,B_a)=\left(\left(
\begin{array}{cc}
\cos(a) & -\sin(a)\\
\sin(a) & \cos(a) \end{array} \right) \cdot x, \left(
\begin{array}{cc}
\cos(a) & -\sin(a)\\
\sin(a) & \cos(a)
\end{array} \right)  \right).$$
In other words if the law of $(Z_{(1)},Z_{(2)})$ is gauge invariant with respect to rotations then the SDE $\Psi$ is invariant with respect to
rotations.
\\
Once we have found an infinitesimal symmetry, we can exploit it to transform the SDE $\Psi$ in an equation of a simpler form as  done, for
example,  in \cite{DMU2} for Brownian-motion-driven SDEs.\\ The first step consists in looking for a stochastic transformation $T=(\Phi,B)$ such
that $T_*(V)$ is a strong symmetry (the existence of the transformation $T$ is guaranteed by Theorem \ref{theorem_infinitesimal_SDE1}). In this
specific case the transformation $T$ has the following form (for $x=(x^1,x^2)\ne (0,0)$)
\begin{equation}\label{equation_transformation}
T=(\Phi(x),B(x))=\left(\left(\begin{array}{c}
x^1 \\
x^2\end{array} \right),\left(\begin{array}{cc}
\frac{x^1}{\sqrt{(x^1)^2+(x^2)^2}} & \frac{x^2}{\sqrt{(x^1)^2+(x^2)^2}}\\
\frac{-x^2}{\sqrt{(x^1)^2+(x^2)^2}} & \frac{x^1}{\sqrt{(x^1)^2+(x^2)^2}}\end{array}\right)\right) \end{equation} and the SDE $\Psi'=E_T(\Psi)$
becomes for such $x$
$$\Psi'(x,z_{(1)},z_{(2)})=\left(\begin{array}{cc}
x^1 & -x^2\\
x^2 & x^1 \end{array} \right)\cdot \left(\begin{array}{c}
z^{1}_{1,(1)}\\
z^2_{1,(1)} \end{array}\right)+ \left(\begin{array}{cc}
\frac{x^1}{\sqrt{(x^1)^2+(x^2)^2}} & \frac{-x^2}{\sqrt{(x^1)^2+(x^2)^2}}\\
\frac{x^2}{\sqrt{(x^1)^2+(x^2)^2}} & \frac{x^1}{\sqrt{(x^1)^2+(x^2)^2}}\end{array}\right) \cdot \left(\begin{array}{c} z^1_{(2)}\\
z^2_{(2)} \end{array}\right).$$ Note that $\Psi'$ does not depend on $z^{1}_{2,(1)},z^{2}_{2,(1)}$, which means that the noise has been reduced by the transformation. The
transformation $T$ has an effect similar to the reduction of redundant Brownian motions in Brownian-motion-driven SDE (see \cite{Elworthy1999}).
 Moreover, if we rewrite the transformed SDE in (pseudo)-polar coordinates
\begin{eqnarray*}
\rho&=&(x^1)^2+(x^2)^2\\
\theta&=&\argg(x^1,x^2),
\end{eqnarray*}
where $\argg(a,b)$ is the function  giving the measure of the angle between $(0,1)$ and $(a,b)$ in $\mathbb{R}^2$, we find
\begin{equation}\label{equation_polar1}
\begin{array}{ccl}
\Psi'^{\rho}(\rho,\theta,z)&=&(\sqrt{\rho}z^1_{1,(1)}+z^1_{(2)})^2+(\sqrt{\rho}z^2_{1,(1)}+z^2_{(2)})^2\\
\Psi'^{\theta}(\rho,\theta,z)&=&\theta+\argg(\sqrt{\rho}z^1_{1,(1)}+z^1_{(2)},\sqrt{\rho}z^2_{1,(1)}+z^2_{(2)}),
\end{array}
\end{equation}
The canonical SDE defined by $(\Psi^{\rho},\Psi^{\theta})$ is a triangular SDE with respect to the solutions processes $(R_t,\Theta_t)$. Indeed
we have
\begin{equation}\label{equation_R}
\begin{array}{rcl}
 dR_t&=&\left(d\left[Z'^1_{(2)},Z'^1_{(2)}\right]^c_t+d\left[Z'^2_{(2)},Z'^2_{(2)}\right]^c_t+(\Delta Z'^1_{(2),t})^2+(\Delta Z'^2_{(2),t})^2\right)+\\
 &&+\sqrt{R_{t_-}}\left(2dZ'^1_{(2),t}+2d\left[Z'^1_{(2)},\overline{Z}'^1_1\right]^c_t+2d\left[Z'^2_{(2)},\overline{Z}'^2_1\right]^c_t+2\Delta
\overline{Z}'^{1}_{1,t} \Delta Z'^1_{(2),t}+2\Delta \overline{Z}'^{2}_{1,t} \Delta Z'^2_{(2),t}\right)+\\
&&+
R_{t_-}\left(2d\overline{Z}'^1_{1,t}+d\left[\overline{Z}'^1_1,\overline{Z}'^{1}_1\right]_t^c+
d\left[\overline{Z}'^2_1,\overline{Z}'^2_1\right]_t^c+(\Delta
\overline{Z}^{1}_{1,t})^2+(\Delta \overline{Z}^2_{1,t})^2\right)
\end{array}
\end{equation}
\begin{equation}\label{equation_Theta}
\begin{array}{rcl}
d\Theta_t&=&\left(d\overline{Z}'^2_{1,t}-2d\left[\overline{Z}'^{1}_1,\overline{Z}'^2_1\right]^c_t
\right)+\\
&&+\frac{1}{\sqrt{R}_{t_-}}\left(dZ'^2_{(2),t}-d\left[\overline{Z}'^{1}_1,Z'^2_{(2)}\right]^c_t
-2d\left[Z'^{1}_{(2)},\overline{Z}'^2_1\right]^c_t\right)-\frac{1}{R_{t_-}}d\left[Z'^{1}_{(2)},Z'^2_{(2)}\right]^c_t+\\
&&+\left(\argg\left(\sqrt{R_{t_-}}(1+\Delta \overline{Z}'^{1}_{1,t})+\Delta Z'^1_{(2),t},\sqrt{R_{t_-}}(\Delta \overline{Z}'^{2}_{1,t})+\Delta
Z'^2_{(2),t}\right) -\Delta \overline{Z}'^2_{1,t}-\frac{\Delta Z'^2_{(2),t}}{\sqrt{R_{t_-}}}\right),
\end{array}
\end{equation}
where
\begin{eqnarray*}
dZ'^i_{(2),t}&=&B^{i}_j(X_{t_-})dZ^j_{(2),t}\\
dZ'^i_{j,(1),t}&=&Z'^i_{k(1),{t_-}}B^{k}_l(X_{t_-})B^j_r(X_{t_-})(Z^{-1}_{(1)})^{l}_{m,t_-} dZ^m_{r,(1),t}\\
d\overline{Z}'^{i}_{j,t}&=&(Z'^{-1}_{(1)})^i_{k,t_-}dZ'^k_{j(1),t} =B^{i}_k(X_{t_-})B^{j}_r(X_{t_-})d\overline{Z}^{k}_{r,t}.
\end{eqnarray*}
Here $B(x)$ is given in \refeqn{equation_transformation}, $X^1_t=\sqrt{R_t}\cos(\Theta_t)$, $X^2_t=\sqrt{R_t}\sin(\Theta_t)$ and
$\overline{Z}^i_j$ are given by equation \refeqn{equation_example2}. It is evident that the SDEs \refeqn{equation_R} and \refeqn{equation_Theta}
 are in triangular form. Indeed, the equation for $R_t$ depends only on $R_t$, while the equation for $\Theta_t$ is independent from $\Theta_t$ itself.
 This means that the process $\Theta_t$ can be reconstructed from the process $R_t$ and the semimartingales $(Z'_{(1)},Z'_{(2)})$ using only
 integrations. Furthermore, using the inverse of the stochastic transformation \refeqn{equation_transformation}, we
 can recover both the solution process $X^1_t,X^2_t$ and the initial noise $(Z_{(1)},Z_{(2)})$  using only inversion of functions and
 It\^o integrations. This situation is very similar to what happens in the
 deterministic setting (see \cite{Olver1993,Stephani1989}) and in the Brownian motion case (see \cite{DMU2}),
 where  the presence of a one-parameter symmetry group allows us  to
split the differential system into a system of lower dimension and an integration (the so called reduction and reconstruction by quadratures).
Also the equation for $R_t$ is recognized to have a familiar form. Indeed, in the case where $Z_{(1)}=I_2$ (the two dimensional identity matrix) and
$Z_{(2)}$ is a two dimensional Brownian motion, equation \refeqn{equation_R} becomes the equation of the two dimensional Bessel process. This
fact should not surprise since  the proposed reduction procedure  is the usual reduction procedure of a two dimensional Brownian motion with
respect to the rotation group. For  generic $(Z_{(1)},Z_{(2)})$ the equation for $R_t$ has the form
$$dR_t=d\mathfrak{Z}^1_t+\sqrt{R_{t_-}}d\mathfrak{Z}^2_t+R_{t_-}d\mathfrak{Z}^2_t,$$
 where\small
\begin{eqnarray*}
\mathfrak{Z}^1_t&=&\left[Z'^1_{(2)},Z'^1_{(2)}\right]^c_t+\left[Z'^2_{(2)},Z'^2_{(2)}\right]^c_t+\sum_{0 \leq s \leq t}\left((\Delta
Z'^1_{(2),s})^2+(\Delta Z'^2_{(2),s})^2\right)\\
\mathfrak{Z}^2_t&=& 2 Z'^1_{(2),t}+2\left[Z'^1_{(2)},\overline{Z}'^1_1\right]^c_t+2\left[Z'^2_{(2)},\overline{Z}'^2_1\right]^c_t+\sum_{0 \leq s
\leq t}\left(2\Delta \overline{Z}'^{1}_{1,s} \Delta Z'^1_{(2),s}+2\Delta \overline{Z}'^{2}_{1,s} \Delta Z'^2_{(2),s}\right)\\
\mathfrak{Z}^3_t&=&2 \overline{Z}'^1_{1,t}+\left[\overline{Z}'^1_1,\overline{Z}'^{1}_1\right]_t^c+
\left[\overline{Z}'^2_1,\overline{Z}'^2_1\right]_t^c+\sum_{0 \leq s \leq t}\left((\Delta \overline{Z}'^{1}_{1,t})^2+(\Delta
\overline{Z}'^2_{1,t})^2\right).
\end{eqnarray*}
\normalsize
Equation \refeqn{equation_R}  can be considered as a kind of generalization of affine processes (see \cite{Filipovic2003}): indeed,
in the case where $Z_{(1)}$ is deterministic and $Z_{(2)}$ is a two dimensional Brownian motion, equation \refeqn{equation_R} reduces to a CIR
model equation (appearing in  mathematical finance), with time dependent coefficients.

\begin{remark}
 The gauge symmetry group $O(2)$ with action $\Xi_B$ on the pair $(Z_{(1)},Z_{(2)})$  has
interesting applications in the iterated map theory. Indeed, let   $Z_{(1)},Z_{(2)}$ be discrete-time semimartingales with independent
increments,  $Z_{(1),n}=K_n \cdot Z_{(1),n-1}$ and
 $Z_{(2),n}=Z_{(2),n-1}+ H_n$, where $K_n \in GL(2), H_n \in \mathbb{R}^2$ are random variables independent from
 $Z_{(1),1},...,Z_{(1),n-1},Z_{(2),1},...,Z_{(2),n-1}$. Therefore we have that $(Z_{(1)},Z_{(2)})$ have $O(2)$ as gauge symmetry group with action $\Xi_B$
  if and only if the distribution of $(K_n,H_n) \in GL(2) \times \mathbb{R}^2$ is invariant with respect to the action
 $\Xi_B$. Indeed in the present case the characteristic triplet $(b,A,\nu)$ of $(Z_{(1)},Z_{(2)})$ is
 \begin{eqnarray*}
b&=&0\\
A&=&0\\
\nu(dt,dz)&=& \sum_{n \in \mathbb{N}} \delta_{n}(dt) m_n(dz)\\
 \end{eqnarray*}
 where $\delta_n$ is the Dirac delta distribution on $\mathbb{R}$ with the mass concentrated in $n \in \mathbb{R}$ and $m_n$ is the probability
 distribution on $N=GL(2) \times \mathbb{R}^2$ of the pair of random variables $(K_n,H_n)$. By Theorem \ref{theorem_characteristic3}, $\Xi_B$ is
 a gauge symmetry of $(Z_{(1)},Z_{(2)})$ if and only if, for any $B \in O(2)$, $\Xi_{B*}(\nu)=\nu$ which is equivalent to request that
 $\Xi_{B*}(m_n)=m_n$. This implies that the law of $(K_n,H_n)$ is invariant with respect to the action $\Xi_B$.\\
The invariance of the law of $K_n \in GL(2)$ with respect to $\Xi_B$ is exactly the invariance of the matrix random variable $K_n$ with respect
to orthogonal conjugation, and the law of the $\mathbb{R}^2$ random variable $H_n$ is rotationally invariant. This kind of random variables and
 related processes are deeply studied in random matrix theory (see, e.g., \cite{Anderson2010,Mehta2004}).\\
\end{remark}

\section*{Acknowledgements}
The first author gratefully acknowledges the Department of Mathematics, Universit\`a degli Studi di Milano, for financial support through a grant of the fourth author. The hospitality of IAM,HCM, University of Bonn, given by to second and fourth author is gratefully acknowledged. This work was also supported  by Gruppo Nazionale Fisica Matematica (GNFM-INdAM).

\bibliographystyle{plain}

\bibliography{levy_symmetries(6)}

\end{document}